\declaretheorem[name=Theorem,numberwithin=section]{theorem}
\declaretheorem[name=Proposition,sibling=theorem]{proposition}
\declaretheorem[name=Lemma,sibling=theorem]{lemma}
\declaretheorem[name=Corollary,sibling=theorem]{corollary}
\declaretheorem[name=Example,numberwithin=section]{example}
\declaretheorem[name=Remark,sibling=example]{remark}
\newcommand{\lra}{\leftrightarrows}
\newcommand{\up}{{\uparrow}}
\newcommand{\ca}[1]{\mathcal{#1}}
\newcommand{\bd}[1]{\mathbf{#1}}
\newcommand{\mf}{\mathsf}
\newcommand{\mi}{\mathit}
\newcommand{\mk}{\mathcal}
\newcommand{\se}{\subseteq}
\newcommand{\sm}{\setminus}
\newcommand{\we}{\wedge}
\newcommand{\ve}{\vee}
\newcommand{\bwe}{\bigwedge}
\newcommand{\bve}{\bigvee}
\newcommand{\bca}{\bigcap}
\newcommand{\opp}[1]{#1^{op}}
\newcommand{\op}{\mathfrak{o}}
\newcommand{\cl}{\mathfrak{c}}
\newcommand{\bl}{\mathfrak{b}}
\newcommand{\sll}{\mf{S}(L)}
\newcommand{\So}{\mf{S}_{\op}}
\newcommand{\Sc}{\mf{S}_{\cl}}
\newcommand{\Sb}{\mf{S}_b}
\newcommand{\See}{\mf{S}_{\ca{E}}}
\newcommand{\fe}{\mathsf{Filt}_{\mathcal{E}}}
\newcommand{\fse}{\mathsf{Filt}_{\mathcal{SE}}}
\newcommand{\Om}{\Omega}
\newcommand{\pt}{\mathsf{pt}}
\newcommand{\kF}{\mk{F}}
\newcommand{\kD}{\mk{D}}
\newcommand{\vF}{\vee^{\mk{F}}}
\newcommand{\wF}{\wedge^{\mk{F}}}
\newcommand{\bwf}{\bwe^{\mk{F}}}
\newcommand{\bvf}{\bve^{\mk{F}}}
\title{Strictly zero-dimensional biframes and Raney extensions}
\author{Anna Laura Suarez\thanks{University of the Western Cape, Robert Sobukwe Road, Bellville 7535, Cape Town, South Africa}}
\date{}
\begin{document}

\maketitle
\begin{abstract}
Raney extensions and strictly zero-dimensional biframes both faithfully extend the dual of the category of $T_0$ spaces. We use tools from pointfree topology to look at the connection between the two. Raney extensions may be equivalently described as pairs $(L,\kF)$ where $L$ is a frame and $\kF\se \So(L)$ a subcolocale containing all opens. Here, $\So(L)$ is the collection of all fitted sublocales of $L$. Similarly, a strictly zero-dimensional biframe is a pair $(L,\kD)$ where $\kD\se \sll$ is a codense subcolocale. We show that there is an adjunction between certain subcolocales of $\So(L)$ and codense subcolocales of $\sll$. We show that the adjunction maximally restricts to an order-isomorphism between what we call the \emph{proper} subcolocales of $\So(L)$ and the \emph{essential} codense subcolocales. As an application of our main result, we establish a bijection between proper Raney extensions and the strictly zero-dimensional biframes $(L_1,L_2,L)$ such that $L$ is an essential extension of $L_2$ in $\bd{Frm}$. We show that this correspondence cannot be made functorial in the obvious way, as a frame morphism $f:L\to M$ may lift to a map $f:(L,\kF)\to (L,\mk{G})$ of Raney extensions without lifting to a map between the associated strictly zero-dimensional biframes.
\end{abstract}

\tableofcontents

\section*{Introduction}

The usual approach in pointfree topology is to consider the adjunction $\Om:\bd{Top}\lra \bd{Frm}^{op}:\pt$ between frames and spaces, and to regard frames as pointfree spaces in virtue of this. This is the classical approach found, for example, in \cite{johnstone82}, \cite{PP12}, \cite{picado21}. The fixpoints on the $\bd{Top}$ side are the sober spaces, so we may view $\bd{Frm}$ as a faithful extension of the opposite of the category $\bd{Sob}$ of sober spaces. An alternative approach is \emph{$T_D$-duality}, developed in \cite{banaschewskitd}. This is based on the $T_D$-axiom, introduced in \cite{Aull62}. The full subcategory of $\bd{Top}$ consisting of the $T_D$-spaces is related via a similar adjunction to a wide subcategory of $\bd{Frm}$, the category of frames with \emph{D-morphisms}, called $\bd{Frm_D}$. The category $\bd{Frm_D}$ is thus shown to faithfully extend the opposite of $\bd{Top_D}$. Sobriety and the $T_D$ property are incomparable. This is why the language of frames, under both translations, is not expressive enough to capture certain concepts and constructions. For example, in $\bd{Frm}$ we do not have a notion of sobrification. Similarly, in $\bd{Frm_D}$, we do not have a notion of $T_D$-coreflection (which exists for spaces). Refining the language by extending the category $\bd{Frm}$ enables us to capture both things in the same category. For example, in \cite{suarez25}, the category $\bd{Frm}$ is extended to the category $\bd{Raney}$ of \emph{Raney extensions}, which also faithfully extends the dual of the category $\bd{Top_0}$ of all $T_0$-spaces. In this category, we have both the notion of \emph{sober coreflection} (Section 6.5), the dual notion of sobrification, and \emph{$T_D$-reflection} (Section 6.6), dual of the $T_D$-coreflection.

\begin{itemize}
\item Raney extensions are inspired by the work of Raney on completely distributive lattices, see for example \cite{raney52}. Raney duality, as illustrated in \cite{bezhanishvili20}, is the result that the dual of $\bd{Top_0}$ is equivalent to the category of completely distributive lattices $\ca{U}(X)$ consisting of upper sets of some poset $X$, equipped with an interior operator. A \emph{Raney extension} is a pair $(L,C)$ where $C$ is a coframe and $L\se C$ is a frame with the inherited order, such that the subset inclusion preserves all joins and strongly exact meets. The Raney extension corresponding to a space $X$ is the pair $(\Om(X),\ca{U}(X))$, where $\Om(X)$ is its frame of opens and $\ca{U}(X)$ is the collection of upper sets in its specialization order. The category $\bd{Raney}$ of Raney extensions faithfully extends $\bd{Top_0}^{op}$.

\item Another approach to refining the language of frames is that of \emph{McKinsey-Tarski algebras}, as introduced in \cite{bezhanishvili23}. This is based on work by McKinsey and Tarski, who in \cite{MT1944} studied topological spaces in terms of the closure operator they induce on their powerset. McKinsey-Tarski algebras make this approach pointfree, by considering as objects complete Boolean algebras, not necessarily atomic, with interior operators. The category of MT-algebras faithfully extends all of $\bd{Top}^{op}$.

\item A third approach to faithfully extending the dual of $\bd{Top_0}$ is that of \emph{strictly zero-dimensional biframes}, as shown in \cite{manuell15}. Although in \cite{manuell15} the pointfree description of $T_0$ spaces is not the focus, it is indeed observed (end of Section 4) that there is a dual adjunction between spaces and strictly zero-dimensional biframes, whose fixpoints are the $T_0$ spaces. 
\end{itemize}
It is then interesting to look at how the three categories interact, and go towards a more unified theory of pointfree $T_0$ spaces. The connection between Raney extensions and MT-algebras is looked at in \cite{bezhanishvili2025mckinseytarskialgebrasraneyextensions}. A Raney extension $(L,C)$ is \emph{proper} if the joins of $L$ distribute over all binary meets in $C$. In \cite{bezhanishvili2025mckinseytarskialgebrasraneyextensions} the connection between Raney extensions and MT-algebras is explored, and it is shown that the category of proper Raney extensions is equivalent to the category $\bd{MT_0}$ of $T_0$ MT-algebras equipped with a notion of morphism based on a proximity-like relation. 

With this paper, we add another part to the big picture, connecting explicitly the categories of Raney extensions and that of strictly zero-dimensional biframes. In particular, we show that there is a bijection at the level of objects between proper Raney extensions and \emph{essential} strictly zero-dimensional biframes. Our approach is by no means the simplest possible way of proving this correspondence. Instead, we want to obtain the correspondence as a byproduct of a more general study of subcolocales of $\sll$ and subcolocales of $\So(L)$. We point out the following result.

\begin{proposition}\label{p: raney and szdbf are subcolocales}
Let $L$ be a frame. Raney extensions on $L$ are in bijective correspondence with subcolocales of $\So(L)$ containing all open sublocales. Strictly zero-dimensional biframes whose first component is $L$ are in bijective correspondence with dense subcolocales of $\sll$.    
\end{proposition}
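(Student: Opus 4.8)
The plan is to prove the two halves in parallel, each by exhibiting a \emph{universal} coframe and identifying the objects of interest with its subcolocales satisfying a closure condition. For the Raney half the universal object is $\So(L)$ itself, and for the biframe half it is the full sublocale coframe $\sll$; in both cases I would use the two frame homomorphisms $\op,\cl\colon L\to\sll$ sending $a$ to the open, respectively closed, sublocale, together with the facts that $\op(a)$ and $\cl(a)$ are complementary in $\sll$ and that the fitted sublocales $\So(L)$ form a coframe closed under arbitrary intersection.

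For the first statement I would begin by verifying that $(L,\So(L))$ is itself a Raney extension, with $L$ embedded via $\op$: the inclusion preserves arbitrary joins since $\op$ is a frame homomorphism, and it preserves precisely the strongly exact meets, since a meet $\bwe_i a_i=a$ in $L$ is by definition strongly exact exactly when $\bca_i\op(a_i)=\op(a)$ in $\sll$. This makes $(L,\So(L))$ the largest Raney extension on $L$. Given an arbitrary Raney extension $(L,C)$, the key step is the canonical comparison map $C\to\So(L)$ extending the inclusion of the opens; I would show it is an injective homomorphism of coframes whose image is a subcolocale containing every $\op(a)$, so that $C$ is identified with such a subcolocale. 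Conversely, any subcolocale $\kF\se\So(L)$ with $\op(L)\se\kF$ is again a coframe, and $(L,\kF)$ with inclusion $\op\colon L\to\kF$ is a Raney extension, since join-preservation and strong exactness are inherited from $\So(L)$. The two assignments are then seen to be mutually inverse by identifying the comparison map with the inclusion of the image.

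For the second statement the universal object is the congruence biframe of $L$, which I would present as the triple $(\sll,\cl(L),\op(L))$ with first component $\cl(L)\cong L$: this is the largest strictly zero-dimensional biframe on $L$, its strict zero-dimensionality being witnessed by the fact that each $\cl(a)$ is complemented in $\sll$ with complement $\op(a)$. A strictly zero-dimensional biframe with first component $L$ is then a sub-biframe, determined by its total part, which embeds as a subcolocale $\kD\se\sll$; the closed sublocales $\cl(a)$ must lie in $\kD$ so that the first component is all of $L$, and the open sublocales $\op(a)$ must lie in $\kD$ so that these remain complemented. The assignment sends such a biframe to this $\kD$, and its inverse sends a subcolocale with the required closure to the biframe $(\kD,\cl(L),\op(L))$. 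Here I would check that the density condition on $\kD$ is exactly the nondegeneracy that makes $(L,\kD)$ a well-defined strictly zero-dimensional biframe, reconciling it with the codensity phrasing via the $\op\leftrightarrow\cl$ complementation.

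The step I expect to be the main obstacle is the comparison map in the Raney half: one must produce it from the abstract data of $(L,C)$ alone and show both that it reflects the order — so that distinct elements of $C$ are separated by the $L$-data attached to them — and that it transports the coframe operations of $C$ to those of $\So(L)$. Both rely essentially on the two defining preservation properties of a Raney extension, join-preservation and strong exactness, rather than on any concrete model of $C$; in particular, isolating which meets are strongly exact and confirming that these are exactly the meets $\op$ preserves carries the technical weight. By contrast, once the congruence biframe of $L$ is identified with $(\sll,\cl(L),\op(L))$ and the open/closed complementation is in hand, the biframe half should be comparatively formal.
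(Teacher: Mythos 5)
Note first that the paper itself does not prove this proposition: the Raney half is exactly Theorem 3.9 of \cite{suarez25} (Raney extensions are the pairs $(L,\ca{F})$ with $\ca{F}\se \fse(L)^{op}$ a subcolocale containing all principal filters), transported along the isomorphism $\So(L)\cong \fse(L)$ of Lemma \ref{l: SoL and FSEL are isomorphic}, and the biframe half is the correspondence of Section 3.2 of \cite{manuell15} between strictly zero-dimensional biframes with first component $L$ and dense quotients of the congruence frame $\sll^{op}$. Your plan re-proves both representation theorems from first principles, which is a legitimate but much heavier route, and as it stands each half has a genuine gap. In the Raney half, the injectivity (order-reflection) of your comparison map $C\to \So(L)$ cannot be derived from the two stated preservation properties: it requires $L$ to be meet-dense in $C$, a clause that belongs to the actual definition in \cite{suarez25} but is omitted from the informal definition in this paper's introduction, and which never enters your argument. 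Without it the statement you are proving is false: take $L$ the two-element frame and $C=L\times L$ with $L$ embedded as the diagonal; this inclusion preserves all joins and all (in particular all strongly exact) meets, yet $\So(L)$ has exactly one subcolocale containing all opens, so the pair $(L,C)$ matches no such subcolocale. Thus the step you flag as ``the main obstacle'' is not merely technical; it is the point where an additional axiom must be invoked.

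In the biframe half the framing is wrong in a way that matters. A strictly zero-dimensional biframe $(L,L_2,M)$ with first component $L$ is a \emph{quotient} of the congruence biframe, not a sub-biframe of it: since every element of $L\se M$ is complemented in $M$, the universal property of the congruence frame (the assembly $\sll^{op}$) yields a unique frame map $\sll^{op}\to M$ extending $a\mapsto \cl(a)$, and this map is surjective because $L$ together with the complements generates $M$. It is this surjection --- equivalently a sublocale of $\sll^{op}$, equivalently a subcolocale $\kD\se \sll$ whose meets differ from intersections in $\sll$ --- that realizes $M$ inside $\sll$; nothing ``embeds as a sub-biframe determined by its total part,'' and your plan never invokes the universal property, which is the engine of this half. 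Relatedly, density is not a ``nondegeneracy'' condition: density of the quotient of $\sll^{op}$ is equivalent to the quotient map being injective on the closed congruences $\cl[L]$, i.e.\ to the first component being $L$ itself rather than a proper quotient of $L$; and a quotient of $\sll^{op}$ is dense precisely when the corresponding subcolocale of $\sll$ contains the top element $L$, which is what the paper elsewhere calls codense --- the reconciliation is by order reversal, not by the $\op\leftrightarrow\cl$ complementation. So this half is not ``comparatively formal''; it needs its own comparison map, produced by a universal property, just as the Raney half does.
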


Notice how, in the usual setting, all sublocales of $L$ are sober: for a sober space $X$, the sober subspaces are exactly the ones of the form
\[
\bca_i U_i\cup V_i^c
\]
where $U_i,V_i\se X$ are opens. Compare this with the fact that every sublocale $S\se L$ of a frame $L$ satisfies
\[
S=\bca\{\op(a)\ve \cl(b)\mid S\se \op(a)\ve \cl(b)\}
\]
to see that it is natural to view $\sll$ as the collection of all \emph{sober} subspaces of $L$. But Raney extensions and strictly zero-dimensional biframes tell us that if we want to capture all subspaces of $L$, not just sober ones, it suffices to look at subcolocales of $\So(L)$ or of $\sll$, rather than sublocales of $L$. For example, notice that if $X$ is a $T_0$ space it may be the case that distinct subspaces induce the same sublocale of $\Om(X)$. However, as Raney extensions faithfully extend $\bd{Top_0}$, this means that the two subspace inclusions will induce different surjections from $(\Om(X),\ca{U}(X))$ in $\bd{Raney}$. By the equivalence in Proposition \ref{p: raney and szdbf are subcolocales} above, this means that these will induce different subcolocales of $\So(\Om(X))$.

In conclusion, it may be argued that $\sll$ which is not refined enough to capture spaces which are not sober, but that the limitation is overcome by looking at the collections $\mf{S}(\So(L))$ and $\mf{S}(\sll)$, instead. In this paper, we show explicitly how to relate the two approaches. We do so by proving an adjunction between codense subcolocales of $\sll$ and certain subcolocales of $\So(L)$ which we call \emph{proper}. We prove that the adjunction maximally restricts to proper subcolocales of $\So(L)$ and a class of subcolocales $\kD\se \sll$ which we characterize explicitly.

\section{Preliminaries}

\subsection{The categories \texorpdfstring{$\bd{Frm}$}{frm} and \texorpdfstring{$\bd{Loc}$}{loc}} 

We first recall some background on frames and point-free topology. For more information on the categories of frames and locales, we refer the reader to Johnstone \cite{johnstone82} or the more recent \cite{PP12} and \cite{picado21}. A \emph{frame} is a complete lattice $L$ satisfying
\[
a\wedge \bigvee B = \bigvee \{a\wedge b\mid b\in B\},
\]
for all $a\in L$ and $B\subseteq L$. A \emph{frame homomorphism} is a function preserving arbitrary joins, including the bottom element $0$, and finite meets, including the top element $1$. We call $\bd{Frm}$ the category of frames and frame homomorphisms. Frames are complete Heyting algebras, with the Heyting implication computed as
\[
x\to y=\bve\{z\in L\mid z\we x\leq y\}.
\]
In particular, the \emph{pseudocomplement} of an $a\in L$ is the element $\neg a=a\to 0$. The archetypal example of frame is the lattice of open sets $\Omega(X)$ for a topological space $X$. The assignment $X\mapsto \Om(X)$ is the object part of a functor $\Omega:\bd{Top}\to \bd{Frm}^{op}$. An element $p$ of a frame $L$ is said to be \emph{prime} if whenever $x\we y\leq p$ for some $x,y\in L$, then $x\leq p$ or $y\leq p$. The collection of all primes of $L$ will be denoted by $\pt(L)$, and the assignment $L\mapsto \pt(L)$ is the object part of a functor $\pt:\bd{Top}\to \bd{Frm}^{op}$, which together with $\Om$ yields an adjunction $\Om:\bd{Top}\lra \bd{Frm}^{op}:\pt$ with $\Om\dashv \pt$. A frame is said to be \emph{spatial} if $a\nleq b$ for $a,b\in L$ implies that there is a prime $p$ with $b\leq p$ and $a\nleq p$. Spatial frames $L$ are precisely the fixpoints of the adjunction $\Om\dashv \pt$. This adjunction is idempotent, and so a frame is spatial if $L\cong \Omega(X)$ for some space $X$. Because of this adjunction, frames are regarded as pointfree spaces, but since the adjunction is contravariant sometimes the category $\bd{Loc}$, equivalent to $\bd{Frm}^{op}$, is used.

\subsubsection{The \texorpdfstring{$T_D$}{TD} approach}

An alternative to the classical duality is the so-called \emph{$T_D$-duality} from \cite{banaschewskitd}. For a space $X$ we call a point $x\in X$ a \emph{$T_D$-point} if it is the intersection of an open and a closed set. We define a space to be $T_D$ if all its points are $T_D$. The axiom is between $T_0$ and $T_1$ and it is introduced in \cite{Aull62}. For a frame $L$, a prime $p\in L$ is \emph{covered} if $\bwe_i x_i=p$ implies $x_i=p$ for some $i\in I$. We call $\pt_D(L)$ the set of covered primes of $L$. We say that a frame morphisms $f:L\to M$ is a \emph{D-morphism} if its right adjoint $f_*:M\to L$ maps covered primes to covered primes. We call $\bd{Frm_D}$ the category of frames and D-morphisms. The assignment $L\mapsto \pt_D(L)$ extends to a functor $\pt_D:\bd{Frm_D}^{op}\to \bd{Top}$. In \cite{banaschewskitd} it is shown that there is an adjunction $\Om:\bd{Top_D}\lra \bd{Frm_D}^{op}:\pt_D$ with $\Om\dashv \pt_D$, where $\bd{Top_D}$ is the category of $T_D$-spaces.
We call a frame \emph{$T_D$-spatial} if it is meet-generated by its covered primes. $T_D$-spatial frames are exactly the fixpoints of the adjunction above, namely the frames of opens of $T_D$-spaces.

\subsubsection{Sublocales}

For a frame $L$, a \emph{sublocale} of $L$ is a subset inclusion $S\subseteq L$ such that 
\begin{enumerate}
\item \label{subp1} $S$ is closed under arbitrary meets; 
\item \label{subp2} $a\to s\in S$ for all $a\in L$ and $s\in S$.  \end{enumerate}
Sublocales are frames when equipped with the order inherited from $L$. In fact, the name comes from the fact that such subset inclusions are, up to isomorphism, the regular monomorphisms in $\bd{Loc}$. If $S\se L$ is a sublocale, we call $\bwe^S$ and $\we^S$ the arbitrary and the binary meets in $S$, respectively, and we use a similar convention for joins. Whenever $L$ is any lattice and $M\se L$ a lattice with the inherited order, we use analogous notation for lattice operations in $M$ with the inherited order. Sublocales have closure operators associated to them. For a sublocale $S\se L$ the map $\nu_S(a)=\bwe\{s\in S\mid a\leq s\}$ for all $a\in L$ is called its \emph{nucleus}.

\begin{lemma}
    Let $S\se L$ be a sublocale. Then, for $s_i\in S$;
    \begin{enumerate}
        \item $\bwe^S_i s_i=\bwe_i s_i$;
        \item $\bve^S_i s_i=\nu_S(\bve_i s_i)$.
    \end{enumerate}
\end{lemma}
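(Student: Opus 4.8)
The plan is to derive both identities straight from the two defining conditions of a sublocale, exploiting throughout that $S$ carries the order inherited from $L$. The point is that the greatest lower bound and least upper bound of a family computed inside $S$ can differ from those computed in $L$ only in \emph{whether the witnessing element happens to lie in $S$}; so in each case I need only locate the relevant $L$-bound and check membership in $S$.

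For the first identity I would argue as follows. Reading condition \ref{subp1} as closure under \emph{all} meets, including the empty one (which forces $1\in S$), immediately gives $\bwe_i s_i\in S$. Since the order on $S$ is the restriction of that on $L$, every lower bound of $\{s_i\}$ in $S$ is in particular a lower bound in $L$ and hence lies below $\bwe_i s_i$; and $\bwe_i s_i$ is itself a lower bound of the family that belongs to $S$. It is therefore the greatest lower bound of $\{s_i\}$ computed in $S$, which is precisely $\bwe^S_i s_i$.

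For the second identity the key step is to describe the upper bounds of $\{s_i\}$ inside $S$. Because the order is inherited, an element $s\in S$ dominates every $s_i$ iff $\bve_i s_i\leq s$, where $\bve_i s_i$ denotes the join taken in $L$; so the upper bounds of the family in $S$ are exactly the members of $\{s\in S\mid \bve_i s_i\leq s\}$, and $\bve^S_i s_i$ is by definition the least element of this set. I would then recognize this least element as the nucleus: by definition $\nu_S(\bve_i s_i)=\bwe\{s\in S\mid \bve_i s_i\leq s\}$ lies in $S$ by condition \ref{subp1}, satisfies $\bve_i s_i\leq \nu_S(\bve_i s_i)$ (hence belongs to the set itself), and sits below every element of the set. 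It is therefore the minimum, giving $\bve^S_i s_i=\nu_S(\bve_i s_i)$.

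I do not expect a genuine obstacle here; the argument is essentially bookkeeping about which order and which operations are in play. The only points demanding a little care are the conventions that arbitrary meets include the empty meet — needed so that the top elements of $S$ and $L$ agree and so that the empty-family instances of both claims hold — and the fact that $\nu_S(a)$ genuinely lands in $S$, both of which are immediate consequences of closure under arbitrary meets.
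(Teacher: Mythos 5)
Your proof is correct: the paper states this lemma as standard background without providing any proof, so there is no argument to compare against, but yours is exactly the routine verification one would supply — meets stay in $S$ by closure under arbitrary meets, and $\nu_S(\bve_i s_i)$ is the least element of $S$ above all the $s_i$, hence the join in $S$. Your attention to the empty-meet convention (forcing $1\in S$) and the observation that only closure under meets is needed (condition \ref{subp2} plays no role) are both accurate.
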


 Every sublocale also has an associated frame \emph{congruence}, a binary relation on $L$ which is a subframe of $L\times L$. An alternative approach, which we will adopt, is to consider the bijection between sublocales of $L$ and \emph{precongruences} on $L$. These are defined to be binary relations $R$ on $L$ such that
\begin{enumerate}
    \item $R$ is reflexive;
    \item $R$ is transitive;
    \item $a'\leq a$ and $(a,b)\in R$ and $b\leq b'$ implies $(a',b')\in R$;
    \item $(a_i,b)\in R$ implies $(\bve_i a_i,b)\in R$;
    \item $(a,b_1),(a,b_2)\in R$ implies $(a,b_1\we b_2)\in R$.
\end{enumerate}
This correspondence is introduced in \cite{moshier18}. The sublocale $S$ is associated with the precongruence $\{(a,b)\in L\times L\mid \nu_S(a)\leq \nu_S(b)\}$. Conversely, for a precongruence $R\se L\times L$, the associated sublocale is
\[
\bigcap \{\cl(x)\ve \op(y)\mid (x,y)\in R\}.
\]
A sublocale of $L$ is \emph{dense} if it contains $0$. Dense sublocales, then, are closed under arbitrary intersections. For a frame $L$, the smallest dense sublocale $\bl(0)\se L$ is always Boolean, and it is called its \emph{Booleanization}.

\subsection{Subcolocales and the coframe \texorpdfstring{$\sll$}{sl}}

\subsubsection{Subcolocales}

Coframes come equipped with a \emph{co-Heyting operator}, known as the \emph{difference} $x{\sm}y$ of two elements $x,y\in C$, computed as
\[
x{\sm} y=\bwe\{z\in C\mid x\leq y\vee z\}.
\]
This operator is characterized by the condition that $x{\sm}y\leq z$ if and only if $x\leq y\vee z$. In particular, the \emph{supplement} of $c\in C$ is $c^{*}= 1{\sm}c$. We shall freely use some its properties, listed in the following lemma.

\begin{lemma}\label{l: properties of difference}
Let $C$ be a coframe. For elements $c,d,c_i,x\in C$:
    \begin{enumerate}
     \item \label{properties of difference 1}If $x$ is complemented, $c{\sm} x=c\we x^{*}$;
     \item \label{properties of difference 2}$(\bve_i c_i){\sm}d=\bve_i (c_i{\sm}d)$;
     \item \label{properties of difference 3}$d{\sm} \bwe_i c_i=\bigvee_i (d{\sm} c_i)$.
    \end{enumerate}
\end{lemma}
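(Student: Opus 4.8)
The plan is to derive all three identities directly from the defining Galois adjunction for the difference, namely that $x{\sm}y\leq z$ if and only if $x\leq y\ve z$. This condition says exactly that the map $(-){\sm}y$ is left adjoint to $y\ve(-)$, and each of the three identities is the order-dual of a standard Heyting-implication fact (implication preserves meets in the consequent, converts joins in the antecedent into meets, and collapses to $\neg x\ve y$ on complemented $x$); passing to $\opp{C}$ would give an alternative route, but I will extract the identities from the adjunction together with the coframe distributive law.

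For item \ref{properties of difference 2}, I will simply invoke that a left adjoint preserves all joins: since $(-){\sm}d$ is left adjoint to $d\ve(-)$, applying it to the family $\{c_i\}$ yields $(\bve_i c_i){\sm}d=\bve_i(c_i{\sm}d)$ with no further computation.

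For item \ref{properties of difference 3}, I will first record that $d{\sm}(-)$ is antitone: if $y_1\leq y_2$ then $y_1\ve z\leq y_2\ve z$, so the set of $z$ with $d\leq y_1\ve z$ is contained in that for $y_2$, and the defining meet over a larger set is smaller, giving $d{\sm}y_2\leq d{\sm}y_1$. Antitonicity applied to $\bwe_i c_i\leq c_j$ then gives $\bve_i(d{\sm}c_i)\leq d{\sm}\bwe_i c_i$ immediately. The reverse inequality carries the real content, and this is the step I expect to be the main obstacle, since it is the only place the coframe distributive law is genuinely needed. By the characterization it suffices to show $d\leq(\bwe_i c_i)\ve\bve_j(d{\sm}c_j)$; writing $w=\bve_j(d{\sm}c_j)$ and using $(\bwe_i c_i)\ve w=\bwe_i(c_i\ve w)$, I reduce to checking $d\leq c_i\ve w$ for each $i$, which holds because $c_i\ve w\geq c_i\ve(d{\sm}c_i)\geq d$, the final step being the instance $z=d{\sm}c_i$ of the adjunction.

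For item \ref{properties of difference 1}, I will first verify that for complemented $x$ the supplement $x^{*}=1{\sm}x$ coincides with the complement $x'$: coframe distributivity gives $x\ve x^{*}=\bwe\{x\ve z\mid x\ve z=1\}=1$, while $x^{*}\leq x'$ since $x'$ lies in the meet defining $x^{*}$, and conversely $x'=x'\we(x\ve x^{*})=(x'\we x)\ve(x'\we x^{*})=x'\we x^{*}\leq x^{*}$; hence $x\we x^{*}=0$. With this established, feeding $z=c$ and $z=x^{*}$ into the adjunction gives $c{\sm}x\leq c$ and $c{\sm}x\leq x^{*}$, whence $c{\sm}x\leq c\we x^{*}$. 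For the reverse, whenever $c\leq x\ve z$ I compute $c\we x^{*}\leq(x\ve z)\we x^{*}=(x\we x^{*})\ve(z\we x^{*})\leq z$, so $c\we x^{*}$ lies below every member of the meet defining $c{\sm}x$; combining the two inequalities yields $c{\sm}x=c\we x^{*}$.
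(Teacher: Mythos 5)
Your proof is correct in all three items, and there is nothing in the paper to compare it against: the lemma is stated in the preliminaries without proof, as standard background on the co-Heyting difference (the paper points to the literature on remainders for this operator). Your route is the expected one — everything is extracted from the adjunction $x{\sm}y\leq z \iff x\leq y\ve z$, with item \ref{properties of difference 2} as preservation of joins by a left adjoint, item \ref{properties of difference 3} using the counit $d\leq c_i\ve(d{\sm}c_i)$ together with the coframe law, and item \ref{properties of difference 1} via the identification $x^{*}=x'$ for complemented $x$. The only step that deserves the scrutiny you implicitly give it is the use of binary distributivity of meet over join (in $x'=x'\we(x\ve x^{*})=(x'\we x)\ve(x'\we x^{*})$ and in the final computation of item \ref{properties of difference 1}); this is legitimate because the coframe law already makes $C$ a distributive lattice, and finite distributivity is self-dual.
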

For a coframe $C$, we say that an element $c\in C$ is \emph{linear} if $\bve_i (a_i\we c)=\bve_i a_i \we c$ for all $a_i\in C$. 
\begin{lemma}\label{l: linear}
    Complemented elements of a coframe are linear.
\end{lemma}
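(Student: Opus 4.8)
The plan is to deduce the linearity identity $\bve_i(a_i\we c)=(\bve_i a_i)\we c$ from the fact that the difference operator distributes over arbitrary joins in its first argument, namely \ref{properties of difference 2} of Lemma \ref{l: properties of difference}. The idea is to rewrite the binary meet with $c$ as a difference against $c^*$, after which linearity becomes a direct instance of that distributivity and the equality drops out at once, with no need to argue the two inequalities separately.

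The one preparatory fact I would establish is that for a complemented element the supplement coincides with the complement, and is therefore itself complemented and involutive. For any $c\in C$ one always has $c\ve c^*=1$: taking $z=c^*$ in the defining adjunction $c^*\leq z\iff c\ve z=1$ gives exactly $c\ve c^*=1$. If moreover $c$ has a complement $c'$, then $c\ve c'=1$ forces $c^*\leq c'$, whence $c\we c^*\leq c\we c'=0$; thus $c^*$ is a complement of $c$, and by uniqueness of complements in the distributive lattice $C$ we conclude $c^*=c'$. In particular $c^*$ is itself complemented, with complement $c$, so that $(c^*)^*=c$.

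With this in hand I would rewrite the meet with $c$ as a difference. Applying \ref{properties of difference 1} of Lemma \ref{l: properties of difference} to the complemented element $c^*$ gives, for every $x\in C$,
\[
x{\sm}c^*=x\we (c^*)^*=x\we c.
\]
In particular $a_i\we c=a_i{\sm}c^*$ for each $i$, and $(\bve_i a_i)\we c=(\bve_i a_i){\sm}c^*$. Now \ref{properties of difference 2} of Lemma \ref{l: properties of difference} yields
\[
(\bve_i a_i){\sm}c^*=\bve_i (a_i{\sm}c^*),
\]
and substituting the two previous identities into the left- and right-hand sides turns this into $(\bve_i a_i)\we c=\bve_i(a_i\we c)$, which is precisely the assertion that $c$ is linear.

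The only point that is not an immediate substitution is the preparatory identification of the supplement of a complemented element with its complement together with its involutivity, since it is exactly this that licenses replacing $x\we c$ by the difference $x{\sm}c^*$ and so reduces the claim to the distributivity of the difference over joins. I therefore expect that step to be the main (and essentially sole) obstacle; once it is in place, the rest is a one-line application of \ref{properties of difference 2} of Lemma \ref{l: properties of difference}.
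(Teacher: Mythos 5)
Your proof is correct. Note that the paper states this lemma without proof (it is recalled as a standard fact in the preliminaries), so there is no in-paper argument to compare against; your derivation is a legitimate one. The preparatory step is sound: $c\ve c^*=1$ follows from the adjunction $x{\sm}y\leq z\iff x\leq y\ve z$ stated in the paper, the inequality $c^*\leq c'$ follows the same way, and uniqueness of complements applies because the coframe law implies finite distributivity; hence $c^*=c'$ and $(c^*)^*=c$. After that, the substitution $x\we c=x{\sm}c^*$ via item \ref{properties of difference 1} of Lemma \ref{l: properties of difference} and the join-distributivity of the difference in its first argument (item \ref{properties of difference 2}) give linearity in one line. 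For comparison, the standard direct argument avoids the difference operator altogether: the inequality $\bve_i(a_i\we c)\leq(\bve_i a_i)\we c$ holds in any complete lattice, and for the converse one checks $(\bve_i a_i)\we c\leq c'\ve\bve_i(a_i\we c)$ using the coframe law and $a_i\leq c'\ve(a_i\we c)$, then meets both sides with $c$. Your route buys a cleaner, calculation-free proof at the price of first identifying supplements with complements --- a fact worth having anyway, since item \ref{properties of difference 1} is phrased in terms of $x^*$ and is only useful for complemented elements once one knows $x^{**}=x$ there.
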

Of particular importance will be the notion dual to that of sublocale. Let us define it explicitly. For a coframe $C$, a \emph{subcolocale} is an inclusion $D\se C$ such that
\begin{enumerate}
    \item $D$ is closed under all joins;
    \item $d{\sm}c\in D$ for all $d\in D$ and $c\in C$.
\end{enumerate}
An inclusion $D\se C$ for a frame $D$ is a subcolocale iff $D^{op}\se C^{op}$ is a sublocale. Dualizing the analogous notion for frames, we see that a subcolocale $D\se C$ determines an interior operator $\nu_D:C\to C$, which we call its \emph{conucleus}.

\begin{lemma}\label{l: subcolocales very basic}
    Let $D\se C$ be a sublocale. Then, for $d_i\in D$;
    \begin{enumerate}
        \item $\bwe^D_i d_i=\nu_D(\bwe_i d_i)$;
        \item $\bve^D_i d_i=\bve_i d_i$.
    \end{enumerate}
\end{lemma}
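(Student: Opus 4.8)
The plan is to obtain this as the exact order-dual of the sublocale lemma stated above, exploiting the fact that $D\se C$ is a subcolocale precisely when $\opp{D}\se \opp{C}$ is a sublocale of the frame $\opp{C}$, and that the conucleus $\nu_D$ of $D$ is by construction the same map as the nucleus $\nu_{\opp{D}}$ of $\opp{D}$. I would simply apply the sublocale lemma to $\opp{D}\se \opp{C}$ and translate the result back along this duality. Meets in $C$ are joins in $\opp{C}$ and meets in $D$ are joins in $\opp{D}$, so the identity $\bve^{\opp{D}}_i d_i=\nu_{\opp{D}}(\bve^{\opp{C}}_i d_i)$ becomes part (1), while $\bwe^{\opp{D}}_i d_i=\bwe^{\opp{C}}_i d_i$ becomes part (2).

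Alternatively, one can argue directly. For (2), the defining property that $D$ is closed under all joins gives $\bve_i d_i\in D$; since $D$ carries the inherited order, this element is already the least upper bound of the $d_i$ inside $D$, and so $\bve^D_i d_i=\bve_i d_i$. For (1), I would use that $\nu_D$ is an interior operator whose fixpoints are exactly the elements of $D$: it is monotone, deflationary ($\nu_D(c)\leq c$), idempotent, and satisfies $\nu_D(c)=c$ iff $c\in D$. Then $\nu_D(\bwe_i d_i)\in D$, and from $\nu_D(\bwe_i d_i)\leq \bwe_i d_i\leq d_j$ one sees it is a lower bound of the family inside $D$. Moreover, any $e\in D$ with $e\leq d_i$ for all $i$ satisfies $e\leq \bwe_i d_i$, hence $e=\nu_D(e)\leq \nu_D(\bwe_i d_i)$ by monotonicity; so $\nu_D(\bwe_i d_i)$ is the greatest lower bound in $D$, which is exactly $\bwe^D_i d_i$.

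I expect no serious obstacle here, as the statement is routine once either the duality dictionary or the interior-operator properties of $\nu_D$ are in hand. The only points requiring a moment's care are, on the duality route, checking that $\nu_D$ and $\nu_{\opp{D}}$ genuinely coincide as functions (immediate, since greatest lower bounds in $\opp{C}$ are least upper bounds in $C$), and, on the direct route, the fact that the fixpoints of the conucleus are precisely $D$. Both of these are built into the definition of subcolocale recalled above, so the argument is essentially a bookkeeping dualization of the earlier lemma.
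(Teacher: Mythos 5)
Your proposal is correct: the paper states this lemma without proof, and its intended justification is precisely your first route --- the remark immediately preceding it, that $D\se C$ is a subcolocale iff $\opp{D}\se \opp{C}$ is a sublocale, makes the statement the exact order-dual of the earlier sublocale lemma. Your alternative direct argument via the interior-operator properties of $\nu_D$ (deflationary, monotone, idempotent, with fixpoints exactly $D$) is also sound, so there is nothing to fix.
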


We say that a subcolocale is \emph{codense} if it contains $1$. We keep the term for the dual notion and call the smallest codense subcolocale of a coframe its \emph{Booleanization}.

\subsubsection{The coframe of sublocales}

The family $\mathsf{S}(L)$ of all sublocales of $L$, ordered by inclusion, is a coframe. Meets are set-theoretical intersections. Because $\sll$ is a coframe, it also comes with a difference operation, computed as $S{\sm}T=\bca\{U\in \sll\mid S\se T\vee U\}$. This is studied in \cite{remainders}. For each $a\in L$, there is an \emph{open sublocale} $\mathfrak{o}(a)=\{b\in L \mid b=a\to b\}=\{a\to b\mid b\in L\} $ and a \emph{closed sublocale} $\mathfrak{c}(a)={\uparrow}a$. Open and closed sublocales behave like open and closed subspaces in many respects, in the lemma below we list a few.

\begin{lemma}
For every frame $L$ and $a,b,a_i\in L$ we have
\begin{enumerate}
    \item \label{opcl1}$\op(1)=L$ and $\op(0)=\{1\}$.
    \item \label{opcl2}$\cl(1)=\{1\}$ and $\cl(0)=L$.
    \item \label{opcl3} $\op(a)\cap \cl(a)=\{1\}$ and $\op(a)\ve \cl(a)=L$.
    \item \label{opcl4}$\bve_i \op(a_i)=\op(\bve_i a_i)$ and $\op(a)\cap \op(b)=\op(a\we b)$.
    \item \label{opcl5}$\bca_i \cl(a_i)=\cl(\bwe_i a_i)$ and $\cl(a)\ve \cl(b)=\cl(a\we b)$.
\end{enumerate}
\end{lemma}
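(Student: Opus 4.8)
The plan is to separate the six identities into two kinds. The \emph{meet-type} identities --- items \ref{opcl1}, \ref{opcl2}, the first equation of \ref{opcl3}, the second equation of \ref{opcl4}, and the first equation of \ref{opcl5} --- are direct computations with the Heyting implication and with up-sets. The \emph{join-type} identities --- the second equation of \ref{opcl3}, the first equation of \ref{opcl4}, and the second equation of \ref{opcl5} --- involve joins taken in $\sll$, which are \emph{not} unions; for these I would use the single observation that $\bve_i S_i$ is the least sublocale containing every $S_i$, so, since sublocales are closed under arbitrary meets, to place an element $x$ in the join it suffices to write $x$ as a meet of elements each drawn from one of the $S_i$.

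For the meet-type identities I would unwind definitions. Items \ref{opcl1} and \ref{opcl2} follow from $1\to b=b$, $0\to b=1$, $\up 1=\{1\}$ and $\up 0=L$. For $\op(a)\cap\cl(a)=\{1\}$, note that $a\le x$ forces $a\to x=1$, so any $x$ lying in both sublocales equals $a\to x=1$, and $1$ lies in every sublocale. For $\op(a)\cap\op(b)=\op(a\we b)$ I would combine $a\to(b\to x)=(a\we b)\to x$, the antimonotonicity $c\le d\Rightarrow(d\to x)\le(c\to x)$, and $x\le a\to x$ to obtain both inclusions. The intersection $\bca_i\cl(a_i)$ is just the intersection of the up-sets $\up a_i$, which equals $\up\!\big(\bve_i a_i\big)$; here I would flag that the right-hand side of \ref{opcl5} should read $\cl(\bve_i a_i)$ rather than $\cl(\bwe_i a_i)$.

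For the join-type identities I would supply explicit meet-decompositions. Every $x\in L$ satisfies $x=(a\to x)\we(a\ve x)$ (expand by distributivity, using $a\we(a\to x)=a\we x$ and $x\le a\to x$), with $a\to x\in\op(a)$ and $a\ve x\in\cl(a)$; since $\op(a)\ve\cl(a)$ is a sublocale containing both factors it contains $x$, giving $\op(a)\ve\cl(a)=L$. The identity $\cl(a)\ve\cl(b)=\cl(a\we b)$ is the analogous decomposition $x=(a\ve x)\we(b\ve x)$, valid precisely when $a\we b\le x$. For $\bve_i\op(a_i)=\op(\bve_i a_i)$ I would prove both inclusions: $\op$ is monotone (from the meet formula), so $\op(\bve_i a_i)$ is a sublocale containing each $\op(a_i)$, hence the join; conversely $(\bve_i a_i)\to x=\bwe_i(a_i\to x)$ exhibits each $x\in\op(\bve_i a_i)$ as a meet of elements $a_i\to x\in\op(a_i)$, all lying in the join.

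The only real obstacle is conceptual rather than computational: joins in $\sll$ are not set-theoretic unions, so an elementwise argument does not directly apply. The uniform fix is the meet-decomposition trick above, which reduces membership in a join to closure under meets of the constituent sublocales; once that is in place, all three join-type identities follow from routine frame and Heyting-algebra manipulations, and the meet-type identities are immediate.
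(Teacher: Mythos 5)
Your proof is correct. The paper itself gives no proof of this lemma --- it is stated as standard background on open and closed sublocales (the classical facts from Picado--Pultr) --- so there is no argument to compare against; your route is the standard one. The meet-type identities are indeed immediate from $1\to b=b$, $0\to b=1$, $a\le x\Rightarrow a\to x=1$, and $a\to(b\to x)=(a\we b)\to x$ together with antitonicity of $(-)\to x$. Your key device for the join-type identities is also exactly what is needed: since $\bve_i S_i$ is the least sublocale containing every $S_i$ and sublocales are closed under arbitrary meets, the decompositions $x=(a\to x)\we(a\ve x)$, $x=(a\ve x)\we(b\ve x)$ (for $a\we b\le x$), and $(\bve_i a_i)\to x=\bwe_i(a_i\to x)$ settle the nontrivial inclusions, while the reverse inclusions follow from monotonicity of $\op$ and antitonicity of $\cl$, as you indicate.

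Your flag on item (5) is also justified: as printed, $\bca_i\cl(a_i)=\cl(\bwe_i a_i)$ is false in general and should read $\cl(\bve_i a_i)$, since $\bca_i\up a_i=\up\bigl(\bve_i a_i\bigr)$. Concretely, in $\Om(\mathbb{R})$ with $a_1=(0,1)$ and $a_2=(2,3)$ one has $\cl(a_1\we a_2)=\cl(0)=\Om(\mathbb{R})$, which strictly contains $\up(a_1\ve a_2)$, e.g.\ it contains $\emptyset$. The correct statement reflects that $\cl$ is a dual isomorphism onto the closed sublocales, turning arbitrary joins in $L$ into intersections and finite meets into joins, in harmony with the second identity $\cl(a)\ve\cl(b)=\cl(a\we b)$.
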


In particular, by \ref{opcl3}, open and closed sublocales are complemented, and as such (Lemma \ref{l: linear}) they are linear. Intersections of open sublocales are called \emph{fitted} sublocales. These form a subcoframe of $\sll$, which we denote as $\So(L)$. The closure operator associated with it is called the \emph{fitting}. This is studied in \cite{clementino18}. More explicitly, the corestriction to its fixpoints is:
\begin{align*}
 \mi{fit}:&\sll\to \sll   \\
          &S\mapsto \bca\{\op(x)\mid x\in L,S\se \op(x)\}.
\end{align*}
Another important class of sublocales is that of the \emph{two-element sublocales}. These are sublocales of the form $\{1,p\}$ for some element $p\in L$, which is then necessarily prime. 

\subsubsection{Exactness and strong exactness} 

For a complete lattice $L$ we say that a join (resp. a meet) is \emph{exact} if it distributes over binary meets (resp. joins). These notions are compared in \cite{ball14}. When $L$ is a frame, we also have the notion of \emph{strongly exact} meet: a meet $\bwe_i x_i$ such that $x_i\to y=y$ for all $i\in I$ implies that $\bwe_i x_i\to y=y$. A filter $F\se L$ of a frame is called \emph{exact} if it is closed under exact meets, and \emph{strongly exact} if it is closed under strongly exact meets. We call $\fse(L)$ the ordered collection of strongly exact filters of $L$, and $\fe(L)$ the collection of the exact filters. 
The following is Theorem 3.5 in \cite{moshier20}. 
\begin{lemma}\label{l: SoL and FSEL are isomorphic}
There is an isomorphism $\varphi:\So(L)\cong \fse(L)$ given by
    \[
     \varphi(F)=\{x\in L\mid F\se \op(x)\}
    \]
    for each $F\in \So(L)$.
\end{lemma}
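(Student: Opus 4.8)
The plan is to produce an explicit inverse $\psi:\fse(L)\to\So(L)$, $\psi(G)=\bca_{x\in G}\op(x)$, and to check that $\varphi$ and $\psi$ are mutually inverse inclusion-reversing bijections. Two facts about open sublocales do the heavy lifting. First, $a\mapsto\op(a)$ is an order-embedding of $L$ into $\sll$: it preserves finite meets and arbitrary joins, and it is injective, since $\op(a)=\op(b)$ forces $a\to z=b\to z$ for all $z$, and evaluating at $z=a$ and $z=b$ yields $b\le a$ and $a\le b$; combined with $\op(a)\cap\op(b)=\op(a\we b)$ this gives $\op(a)\se\op(b)$ iff $a\le b$. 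Second, unwinding $z\in\op(a)\iff a\to z=z$ against the definition of a strongly exact meet shows that for any family $(x_i)$ one always has $\op(\bwe_i x_i)\se\bca_i\op(x_i)$, while the reverse inclusion holds precisely when $\bwe_i x_i$ is strongly exact; equivalently, $\bca_i\op(x_i)=\op(\bwe_i x_i)$ iff the meet $\bwe_i x_i$ is strongly exact.

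Granting these, the routine directions are quick. Since $\op$ preserves finite meets and is monotone, $\varphi(F)$ contains $1$, is upward closed, and is closed under binary meets, so it is a filter; and if $(x_i)\se\varphi(F)$ has a strongly exact meet, then $F\se\bca_i\op(x_i)=\op(\bwe_i x_i)$, so $\bwe_i x_i\in\varphi(F)$, proving $\varphi(F)\in\fse(L)$. The map $\psi$ lands in $\So(L)$ because intersections of opens are fitted. For $F\in\So(L)$ we get $\psi(\varphi(F))=\bca\{\op(x)\mid F\se\op(x)\}=\mi{fit}(F)=F$, as $F$ is fitted; hence $\psi\varphi=\mathrm{id}$, which already gives injectivity of $\varphi$.

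The real work is $\varphi\psi=\mathrm{id}$, i.e.\ recovering a strongly exact filter from its fitted sublocale, and this is the main obstacle. Write $F_G=\psi(G)=\bca_{x\in G}\op(x)$; the inclusion $G\se\varphi(F_G)$ is immediate. For the reverse, suppose $F_G\se\op(y)$. The key idea is to feed $y$ back into $G$ through the family $(x\ve y)_{x\in G}$, which lies in $G$ since $G$ is upward closed and $x\ve y\ge x$. Using that $\op$ preserves joins together with the infinitary coframe law $\bwe_i(a_i\ve b)=(\bwe_i a_i)\ve b$ in $\sll$, I compute $\bca_{x\in G}\op(x\ve y)=\big(\bca_{x\in G}\op(x)\big)\ve\op(y)=F_G\ve\op(y)=\op(y)$, the last equality using $F_G\se\op(y)$. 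On the other hand $y\le x\ve y$ for every $x$ gives $\op(y)\se\op(\bwe_{x\in G}(x\ve y))\se\bca_{x\in G}\op(x\ve y)$, so all three sets coincide with $\op(y)$. By injectivity this forces $\bwe_{x\in G}(x\ve y)=y$, and since $\bca_{x\in G}\op(x\ve y)=\op(\bwe_{x\in G}(x\ve y))$ the characterization of strongly exact meets shows this meet is strongly exact. As $G$ is a strongly exact filter containing each $x\ve y$, we conclude $y\in G$, so $\varphi(F_G)\se G$. The one genuinely delicate point is precisely this choice of the family $(x\ve y)_{x\in G}$; everything else is bookkeeping.

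Finally, $\varphi$ and $\psi$ are mutually inverse and hence bijective, and both reverse inclusion (if $F\se F'$ then $\varphi(F')\se\varphi(F)$). Thus $\varphi$ is the asserted isomorphism identifying the coframe $\So(L)$ with $\fse(L)$ equipped with the order that turns this inclusion-reversing bijection into an isomorphism.
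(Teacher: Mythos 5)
Your proof is correct, but it is worth knowing that the paper does not actually prove this lemma: it is stated with a bare citation to Theorem 3.5 of \cite{moshier20}, so what you have produced is a self-contained reconstruction of the cited result rather than an alternative to an argument in the paper. Your reconstruction is sound. The two facts you isolate are exactly the right ones: that $a\mapsto \op(a)$ is an order-embedding (so $\op(a)\se \op(b)$ iff $a\leq b$), and that $\bca_i \op(x_i)=\op(\bwe_i x_i)$ holds precisely when the meet $\bwe_i x_i$ is strongly exact --- indeed the inclusion $\op(\bwe_i x_i)\se \bca_i\op(x_i)$ is automatic, while the reverse inclusion, unwound through $z\in\op(a)\iff a\to z=z$, is literally the definition of strong exactness. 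The surjectivity step, which you correctly identify as the only delicate point, is handled by the standard trick: from $\psi(G)\se\op(y)$ you pass to the family $(x\ve y)_{x\in G}\se G$, use that $\op$ preserves joins and the coframe law of $\sll$ to get $\bca_{x\in G}\op(x\ve y)=\psi(G)\ve\op(y)=\op(y)$, conclude by injectivity that $\bwe_{x\in G}(x\ve y)=y$ with this meet strongly exact, and then $y\in G$ because $G$ is a strongly exact filter. One point to flag explicitly: $\varphi$ reverses inclusions, so strictly it is an isomorphism $\So(L)\cong\fse(L)^{op}$ when both sides carry the inclusion order; the paper's statement is silent on which order $\fse(L)$ carries, but elsewhere (in its definition of Raney extensions) it treats $\fse(L)^{op}$ as the relevant coframe, so your closing remark resolves the ambiguity in the intended way. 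What the paper's citation buys is brevity; what your proof buys is that the lemma is seen to require nothing beyond the basic facts about open sublocales and the coframe structure of $\sll$ already listed in the paper's preliminaries.
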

We will consider the map $\varphi\circ \mi{fit}:\sll\to \fse(L)$. We call this $\mi{ker}$, for \emph{kernel}, as it assigns to a sublocale $S\se L$ the set $s^{-1}(1)$, where $s:L\to S$ is its surjection. Theorem 6.6 of \cite{jakl25}, together with the fact that $\varphi$ is an isomorphism, gives the following.

\begin{lemma}\label{l: FEL are the kernels of SbL}
For every frame $L$, $\mi{ker}[\Sb(L)]=\fe(L)$.
\end{lemma}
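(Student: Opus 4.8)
The statement asks for an equality of two subsets of $\fse(L)$ --- note this already presupposes $\fe(L)\se \fse(L)$, which holds because every strongly exact meet is exact, so that closure under exact meets entails closure under strongly exact meets. My plan is to reduce the claim to Theorem 6.6 of \cite{jakl25} by first making the map $\mi{ker}$ completely explicit. Since $\mi{ker}=\varphi\circ \mi{fit}$ and every open sublocale $\op(x)$ is itself fitted, for any sublocale $S$ we have $S\se \op(x)$ if and only if $\mi{fit}(S)\se \op(x)$; combining this with the formula for $\varphi$ in Lemma \ref{l: SoL and FSEL are isomorphic} yields
\[
\mi{ker}(S)=\{x\in L\mid \mi{fit}(S)\se \op(x)\}=\{x\in L\mid S\se \op(x)\}.
\]
In particular $\mi{ker}$ depends only on the fitting of $S$, so sublocales with a common fitting share a kernel, and the problem becomes one about the fitted sublocales $\mi{fit}[\Sb(L)]$ viewed inside $\So(L)\cong \fse(L)$.

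With this reduction in hand, I would prove the two inclusions by transporting Jakl's theorem across $\varphi$. For $\mi{ker}[\Sb(L)]\se \fe(L)$, take $S\in \Sb(L)$ and show that $\mi{ker}(S)$ is closed under exact meets: given an exact meet $\bwe_i x_i$ with $S\se \op(x_i)$ for every $i$, the defining feature of $\Sb(L)$, as captured by Theorem 6.6 of \cite{jakl25}, must force $S\se \op(\bwe_i x_i)$, that is, $\bwe_i x_i\in \mi{ker}(S)$. For the reverse inclusion $\fe(L)\se \mi{ker}[\Sb(L)]$, I would start from an arbitrary exact filter $F$, transport it to the fitted sublocale $\varphi^{-1}(F)$, and invoke the same theorem to exhibit a member of $\Sb(L)$ whose fitting is $\varphi^{-1}(F)$; by the factorization of the previous paragraph such a sublocale has kernel $\varphi(\varphi^{-1}(F))=F$.

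Since all of the genuine content is imported from \cite{jakl25}, I expect the real work to be in the bookkeeping at the interface rather than in any new computation. Concretely, the obstacle is to verify that the cited description of the exact filters lines up, term for term, with the image $\varphi[\mi{fit}[\Sb(L)]]$ under the isomorphism of Lemma \ref{l: SoL and FSEL are isomorphic}, and that the class $\Sb(L)$ is well enough behaved under $\mi{fit}$ that passing to fittings neither enlarges nor shrinks its set of kernels. Once these compatibilities are checked, the equality $\mi{ker}[\Sb(L)]=\fe(L)$ falls out by applying $\varphi$ to the correspondence of \cite{jakl25}, with no further estimates required.
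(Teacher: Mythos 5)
Your proposal is correct and takes essentially the same approach as the paper: the paper's entire proof consists of citing Theorem 6.6 of \cite{jakl25} and transporting it across the isomorphism $\varphi$ of Lemma \ref{l: SoL and FSEL are isomorphic}, using $\mi{ker}=\varphi\circ \mi{fit}$, which is exactly your strategy. Your added bookkeeping --- the explicit formula $\mi{ker}(S)=\{x\in L\mid S\se \op(x)\}$ and the observation that $\mi{ker}$ depends only on $\mi{fit}(S)$ --- is sound and merely makes explicit what the paper leaves implicit.
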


We say that a frame map $f:L\to M$ is \emph{exact} if, whenever $\bwe_ix_i\in L$ is an exact meet, $\bwe_i f(x_i)$ is exact, and $\bwe_i f(x_i)=f(\bwe_i x_i)$. A sublocale $S\se L$ whose surjection $s:L\to S$ is exact is called an \emph{exact} sublocale. Exact sublocales are precisely those whose surjection preserves all exact meets. The next result is Proposition 7.15 in \cite{suarez25}.

\begin{lemma}\label{l: characterization of exact sublocales}
    A sublocale $S\se L$ is exact if and only if whenever $\bwe_i x_i\in L$ is an exact meet and $S\cap \cl(x_i)\se \cl(x)$ for every $i\in I$ then $S\cap \cl(\bwe_i x_i)\se \cl(x)$ for all $x\in L$.
\end{lemma}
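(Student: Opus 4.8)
The plan is to lean on the reformulation recalled just above, that $S$ is exact exactly when its surjection $s:L\to S$ preserves all exact meets, and to translate the two containment hypotheses into inequalities about the nucleus $\nu_S$, using $s(a)=\nu_S(a)$. Since meets in $S$ agree with those in $L$, preservation of an exact meet $\bwe_i x_i$ is just the identity $s(\bwe_i x_i)=\bwe_i s(x_i)$; as $s=\nu_S$ is monotone, the inequality $s(\bwe_i x_i)\leq \bwe_i s(x_i)$ comes for free, so everything will reduce to the reverse inequality $\bwe_i s(x_i)\leq s(\bwe_i x_i)$, for each exact meet.

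First I would prove the translation: for all $a,x\in L$,
\[
S\cap \cl(a)\se \cl(x)\quad\Longleftrightarrow\quad x\leq \nu_S(a).
\]
This follows once one observes that $S\cap \cl(a)=\{s\in S\mid a\leq s\}$ and that $\nu_S(a)=\bwe\{s\in S\mid a\leq s\}$ belongs to $S$ (which is meet-closed), hence is the least element of $S$ above $a$. Forward: $\nu_S(a)\in S\cap \cl(a)$, so it lies in $\cl(x)$, i.e.\ $x\leq \nu_S(a)$. Backward: if $x\leq \nu_S(a)$ and $s\in S$ has $a\leq s$, then $\nu_S(a)\leq s$, so $x\leq s$.

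Next I would apply this with $a=x_i$ and with $a=\bwe_i x_i$. The hypothesis ``$S\cap \cl(x_i)\se \cl(x)$ for all $i$'' becomes ``$x\leq \nu_S(x_i)$ for all $i$'', that is $x\leq \bwe_i s(x_i)$, and the conclusion ``$S\cap \cl(\bwe_i x_i)\se \cl(x)$'' becomes ``$x\leq s(\bwe_i x_i)$''. So, for a fixed exact meet, the displayed condition says exactly that $x\leq \bwe_i s(x_i)$ implies $x\leq s(\bwe_i x_i)$ for every $x\in L$. Quantifying over $x$, this collapses to the single inequality $\bwe_i s(x_i)\leq s(\bwe_i x_i)$, by taking the largest $x$ satisfying the premise, namely $x=\bwe_i s(x_i)$ itself. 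Together with the free reverse inequality this is preservation of that exact meet; ranging over all exact meets and invoking the reformulation then gives both directions of the lemma at once.

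I do not expect a serious obstacle: the argument is essentially a change of language. The step to get right is the translation lemma, together with the observation that the universally quantified implication in $x$ reduces to one inequality because the witnesses $\{x\mid x\leq \bwe_i s(x_i)\}$ have a top element that itself satisfies the premise. I would also take care to use the stated reformulation of exactness, so that no separate check that the image meet is again exact is required.
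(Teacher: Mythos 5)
Your proof is correct. Note that the paper itself offers no proof of this lemma: it is quoted as Proposition 7.15 of the cited Raney-extensions paper, so there is no in-text argument to compare against. Your route is the natural self-contained one: the translation $S\cap \cl(a)\se \cl(x)\iff x\leq \nu_S(a)$ is valid (since $\nu_S(a)$ is the least element of $S$ above $a$ and lies in $S\cap\cl(a)$), the reduction of the universally quantified implication to the single inequality $\bwe_i s(x_i)\leq s(\bwe_i x_i)$ via the witness $x=\bwe_i s(x_i)$ is sound, and the reverse inequality is indeed free by monotonicity of $\nu_S$. You are also right to lean on the paper's stated reformulation that exact sublocales are exactly those whose surjection preserves all exact meets --- that is what lets you avoid verifying separately that $\bwe_i s(x_i)$ is an exact meet in $S$, a point your argument would otherwise have to address, since the general definition of an exact map demands it. One small implicit step worth making explicit if you write this up: the meet $\bwe_i s(x_i)$ taken in $L$ coincides with the meet taken in $S$ (because $S$ is closed under arbitrary meets), so ``preserves the meet'' may safely be read as the equality $s(\bwe_i x_i)=\bwe_i s(x_i)$ computed in $L$.
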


\subsubsection{Distinguished subcolocales of \texorpdfstring{$\sll$}{SL}}

In this work, we look at subcolocales of $\sll$. Among these is the collection $\mf{S}_b(L)$ of joins of complemented sublocales (see \cite{arrieta22}). These also coincide with those sublocales of the form $\bve_i \cl(x_i)\cap \op(y_i)$. We observe that a subcolocale of $\sll$ is codense if and only if it contains $L$. The following is well-known.

\begin{lemma}\label{l: SbL is codense}
For any frame $L$, the inclusion $\mf{S}_b(L)\se \sll$ is a codense subcolocale. In particular, it is the Booleanization of $\sll$.    
\end{lemma}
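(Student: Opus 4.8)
The plan is to verify three things in turn: that the inclusion $\Sb(L)\se\sll$ is a subcolocale, that it is codense, and that it is contained in every codense subcolocale, so that it is the smallest one and hence the Booleanization. Two of these are immediate. Closure under joins is clear from the definition, since a join of elements each of which is itself a join of complemented sublocales is again a join of complemented sublocales. Codensity holds because $L=\op(1)=\cl(0)$ is complemented, hence lies in $\Sb(L)$, and by the observation preceding the statement a subcolocale of $\sll$ is codense exactly when it contains $L$.

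The main work, and the step I expect to be the real obstacle, is closure under the difference operation: given $S\in\Sb(L)$ and an \emph{arbitrary} $T\in\sll$, I must show $S\sm T\in\Sb(L)$. The difficulty is that $T$ need not be complemented, so Lemma \ref{l: properties of difference}(\ref{properties of difference 1}) does not apply directly. The idea is to push the difference through \emph{both} arguments using the distributivity laws of Lemma \ref{l: properties of difference}. Writing $S=\bve_i B_i$ with each $B_i$ complemented, part (\ref{properties of difference 2}) gives $S\sm T=\bve_i(B_i\sm T)$, so it suffices to treat $B\sm T$ for a single complemented $B$. Now I expand $T$ by its representation $T=\bca\{\op(a)\ve\cl(b)\mid T\se\op(a)\ve\cl(b)\}$, valid for every sublocale. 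Since each $\op(a)\ve\cl(b)$ is complemented (with complement $\cl(a)\cap\op(b)$), part (\ref{properties of difference 3}) yields
\[
B\sm T=\bve_{(a,b)}\bigl(B\sm(\op(a)\ve\cl(b))\bigr),
\]
and for each term part (\ref{properties of difference 1}) gives $B\sm(\op(a)\ve\cl(b))=B\cap\cl(a)\cap\op(b)$, a finite meet of complemented sublocales, hence complemented. Thus $B\sm T$, and therefore $S\sm T$, is a join of complemented sublocales and lies in $\Sb(L)$.

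It remains to prove minimality. Let $D\se\sll$ be any codense subcolocale; I claim $\Sb(L)\se D$. Since $D$ contains $L$ and is closed under difference, Lemma \ref{l: properties of difference}(\ref{properties of difference 1}) gives $\cl(x)=L\sm\op(x)\in D$ for every $x$, and then $\cl(x)\cap\op(y)=\cl(x)\sm\cl(y)\in D$ for all $x,y$. As $D$ is closed under joins, it contains every sublocale of the form $\bve_i(\cl(x_i)\cap\op(y_i))$; but by the description recalled before the statement these are exactly the elements of $\Sb(L)$, so $\Sb(L)\se D$. Combining the three steps, $\Sb(L)$ is a codense subcolocale contained in every codense subcolocale, hence the Booleanization of $\sll$.
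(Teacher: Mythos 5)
Your proof is correct. There is no proof in the paper to compare it against: the lemma is recorded there as well-known, with a pointer to the literature, so your argument is a genuinely self-contained verification, and it uses only tools the paper makes available. Closure of $\Sb(L)$ under joins is indeed immediate; codensity follows since $L$ is complemented and, as the paper observes, a subcolocale of $\sll$ is codense exactly when it contains $L$. Your key step --- expanding an arbitrary $T\in\sll$ as $\bca\{\op(a)\vee\cl(b)\mid T\se \op(a)\vee\cl(b)\}$ and pushing the difference through via items \ref{properties of difference 3} and \ref{properties of difference 1} of Lemma \ref{l: properties of difference} --- is essentially the same computation the paper performs when proving Proposition \ref{p: subcolocales of SL characterization}, so you could compress it by invoking that proposition directly: it suffices that $\Sb(L)$ is closed under joins and stable under $-\cap\op(x)$ and $-\cap\cl(x)$, which follows by distributing the intersection over a join of complemented sublocales (Lemma \ref{l: linear}) and noting that a finite intersection of complemented sublocales is complemented. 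That last fact (complemented elements of a distributive lattice form a sublattice, the complement of a meet being the join of the complements) is also the one assertion your write-up leaves implicit when you declare $B\cap\cl(a)\cap\op(b)$ complemented; it is standard and harmless. Finally, your minimality step coincides with the paper's own argument for item \ref{very easy 1} of Lemma \ref{l: very easy}, pushed one difference further to obtain $\cl(x)\cap\op(y)\in\kD$ for every codense subcolocale $\kD$, and combined with the paper's description of $\Sb(L)$ as the sublocales of the form $\bve_i\cl(x_i)\cap\op(y_i)$ it correctly yields that $\Sb(L)$ is the smallest codense subcolocale, i.e.\ the Booleanization of $\sll$.
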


We will need the following result.

\begin{lemma}\label{l: smooth implies induced}
If $X$ is a $T_D$-space, all sublocales in $\Sb(\Om(X))$ are $T_D$-spatial.  
\end{lemma}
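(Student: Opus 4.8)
The plan is to reduce to the fact, recalled in the preliminaries, that the $T_D$-spatial frames are exactly the frames of opens of $T_D$-spaces, and to exploit that the $T_D$ separation axiom is hereditary. Writing a sublocale $S\in\Sb(\Om(X))$ in the form $S=\bve_i \cl(x_i)\cap\op(y_i)$ afforded by the description of $\Sb$, I would first treat each complemented summand $C_i:=\cl(x_i)\cap\op(y_i)$ separately and then assemble the join.

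For the summands: writing $x_i=U_{x_i}$ and $y_i=U_{y_i}$ for the corresponding opens, each $C_i$ is the sublocale induced by the locally closed subspace $Z_i=(X\setminus U_{x_i})\cap U_{y_i}$ of $X$, and as such $C_i\cong\Om(Z_i)$ as a frame. Since $Z_i$ is a subspace of the $T_D$-space $X$ and $T_D$ is hereditary, $Z_i$ is $T_D$, so $\Om(Z_i)$, and hence $C_i$, is $T_D$-spatial. Being $T_D$-spatial, $C_i$ is the join in $\sll$ of the two-element sublocales carried by its covered primes; as $Z_i$ is $T_D$ these covered primes are precisely the primes $p_z$ (for $z\in Z_i$) associated with the points of $Z_i$, and since $z$ is then a point of the ambient $T_D$-space $X$, the prime $p_z$ is covered in $\Om(X)$ itself.

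It then remains to control the join. Since each $C_i$ is a join of two-element sublocales $\{1,p_z\}$ with $p_z$ covered in $\Om(X)$, the whole of $S$ is such a join, say $S=\bve_{p\in P}\{1,p\}$ with every $p\in P$ a covered prime of $\Om(X)$. A direct computation shows that the join of a family of two-element sublocales is the collection of all meets of subfamilies of the corresponding primes; in particular $P\se S$, and each $p\in P$, being covered in $\Om(X)$ and meets in $S$ agreeing with meets in $\Om(X)$, is covered in $S$. Hence every element of $S$ is a meet of covered primes of $S$, which is exactly the statement that $S$ is $T_D$-spatial.

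The main obstacle is the passage through the join. One must be sure that the two-element sublocales generating each spatial summand $C_i$ really are the point sublocales $\{1,p_z\}$ of the ambient space $X$, so that their primes are covered in $\Om(X)$ and not merely in $C_i$; and one must know that the join of such two-element sublocales is again meet-generated by them. Both points are where the hereditary nature of $T_D$ and the explicit description of joins of two-element sublocales are used. Equivalently, one can phrase the whole argument by saying that the subspace-to-sublocale assignment preserves unions, so that $S$ is the sublocale induced by the $T_D$-subspace $\bcu_i Z_i$ and is therefore isomorphic to $\Om(\bcu_i Z_i)$, which is $T_D$-spatial.
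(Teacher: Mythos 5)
Your proof is correct, but it arrives at the statement by a more self-contained route than the paper, which disposes of it in two lines: the paper simply cites Proposition 3.2 of \cite{arrieta22}, which says that every sublocale in $\Sb(\Om(X))$ is induced by a subspace $Y\se X$ (its surjection is $\Om(i_Y):\Om(X)\to \Om(Y)$), and then concludes by heredity of $T_D$ that each such sublocale is the open-set frame of a $T_D$ space, hence $T_D$-spatial. Your argument reconstructs exactly this subspace-induced description in the case at hand: each summand $\cl(x_i)\cap\op(y_i)$ is induced by the locally closed subspace $Z_i$, and the join $S$ is induced by $\bcu_i Z_i$ because joins of sublocales are computed as meets of elements of the union --- your closing remark is, in effect, the statement of the cited proposition. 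What your route buys is independence from that citation, plus an explicit verification at the level of the paper's definition of $T_D$-spatiality: the point primes $p_z=X\sm\overline{\{z\}}$ lie in $S$, are covered in $\Om(X)$ because $X$ is $T_D$, hence are covered in $S$ since meets in a sublocale agree with ambient meets, and they meet-generate $S$. What it costs is that three standard facts are asserted rather than proved and would need their own references: (i) that locally closed subspaces induce precisely the sublocales $\cl(x)\cap\op(y)$, with the isomorphism $C_i\cong\Om(Z_i)$ carrying the prime of $\Om(Z_i)$ at $z$ to the \emph{ambient} prime $p_z$ (this identification is exactly what you need so that your generating primes are covered in $\Om(X)$ and not merely in $C_i$); (ii) that the join of a family of sublocales is the set of all meets of elements of their union; and (iii) that in a $T_D$ space every point prime is covered. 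None of these is in doubt, so there is no gap; your proof just trades the paper's single external dependency for several smaller, more elementary ones.
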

\begin{proof}
  Proposition 3.2 of \cite{arrieta22} states that all sublocales in $\Sb(\Om(X))$ are induced by some subspace of $X$, in the sense that, for each of these, the surjection corresponding to them is of the form $\Om(i_Y):\Om(X)\to \Om(Y)$ for some subspace inclusion $i_Y:Y\se X$. Since subspaces of $T_D$ spaces are $T_D$, these are $T_D$-spatial sublocales. 
\end{proof}

We will also look at the collection $\mf{S}_{sp}(L)$ of spatial sublocales of $L$, by which we mean the sublocales $S\se L$ where $S$ is a spatial frame. Equivalently, spatial sublocales are characterized as those which are joins of two-element sublocales. For all frames $L$, the inclusion $\mf{S}_{sp}(L)\se \sll$ is a subcolocale inclusion, in particular, it is the subcolocale associated with the spatialization surjection of $\sll^{op}$ (Proposition 3.14 of \cite{suarez22}). The next result follows by definition of codensity.

\begin{lemma}\label{l: SspL is codense iff L is spatial}
   For a frame $L$, the subcolocale $\mf{S}_{sp}(L)\se \sll$ is codense if and only if $L$ is spatial.
\end{lemma}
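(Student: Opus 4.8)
The plan is to unfold the definition of codensity and thereby reduce the statement to a near-tautology about the full sublocale. Recall from the observation preceding the statement that a subcolocale $D\se \sll$ is codense precisely when it contains the top element of the coframe $\sll$, namely the full sublocale $L\se L$. Hence all the lemma really asserts is a criterion for when $L$ itself belongs to $\mf{S}_{sp}(L)$.

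First I would observe that the full sublocale $L\se L$ carries, under the inherited order, exactly the frame structure of $L$: its arbitrary meets, joins and Heyting operations coincide with those of $L$. By definition $\mf{S}_{sp}(L)$ consists of those sublocales $S\se L$ that are spatial frames in the inherited order, so $L\in \mf{S}_{sp}(L)$ holds if and only if $L$ is a spatial frame. Equivalently, using the characterization of spatial sublocales as joins of two-element sublocales, one checks that the join in $\sll$ of all $\{1,p\}$ with $p$ prime equals the full sublocale exactly when $L$ is meet-generated by its primes, i.e.\ exactly when $L$ is spatial; either route yields the same criterion.

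Combining these gives the desired chain of equivalences: $\mf{S}_{sp}(L)$ is codense iff $L\in \mf{S}_{sp}(L)$ iff $L$ is spatial. I expect no real obstacle here. The only points requiring any care are the identification of the top of $\sll$ with the full sublocale and the recognition that its inherited frame structure is just that of $L$; once these are in hand the statement is a direct definitional unfolding, in line with the paper's remark that the result follows by the definition of codensity.
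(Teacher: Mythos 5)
Your proof is correct and matches the paper's intended argument: the paper itself dispatches this lemma with the remark that it ``follows by definition of codensity,'' which is precisely your chain of equivalences ($\mf{S}_{sp}(L)$ codense iff $L\in \mf{S}_{sp}(L)$ iff $L$ is spatial, using that the full sublocale inherits the frame structure of $L$). Your spelled-out version, including the alternative check via joins of two-element sublocales, is just a more explicit rendering of the same definitional unfolding.
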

The collection of all exact sublocales of $L$ will be denoted as $\See(L)$. The coming result follows from Theorem 7.20 of \cite{suarez25} and Proposition 7.12 from the same paper. The second part of the claim follows from $\Sb(L)\se \sll$ being the smallest codense subcolocale.
\begin{lemma}\label{l: SbL included in SeeL}
    The inclusion $\See(L)\se \sll$ is a codense subcolocale. In particular, $\Sb(L)\se \See(L)$.
\end{lemma}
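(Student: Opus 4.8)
The statement splits into two parts: that $\See(L)\se \sll$ is a codense subcolocale, and that $\Sb(L)\se \See(L)$. The plan is to handle the second part first, since it is purely formal. By Lemma~\ref{l: SbL is codense}, $\Sb(L)$ is the \emph{smallest} codense subcolocale of $\sll$, so once $\See(L)$ is shown to be a codense subcolocale the inclusion $\Sb(L)\se \See(L)$ is automatic. Thus all the real work lies in establishing the first part.

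Codensity is immediate: the top element of the coframe $\sll$ is $L$ itself, and $L$ is an exact sublocale because its associated surjection is the identity, which preserves every meet and in particular every exact one; hence $L\in \See(L)$. It then remains to verify the two subcolocale axioms, namely that $\See(L)$ is closed under arbitrary joins in $\sll$ and under the difference $S\sm T$ by an arbitrary $T\in \sll$.

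The join-closure I would verify directly from the characterization in Lemma~\ref{l: characterization of exact sublocales}. The key observation is that each closed sublocale $\cl(y)$ is complemented, hence linear (Lemma~\ref{l: linear}), so intersecting with it distributes over joins: for $S=\bve_k S_k$ one has $S\cap \cl(y)=\bve_k(S_k\cap \cl(y))$. Now suppose $\bwe_i x_i$ is an exact meet with $S\cap \cl(x_i)\se \cl(x)$ for all $i$; then each $S_k\cap \cl(x_i)\se \cl(x)$, exactness of $S_k$ gives $S_k\cap \cl(\bwe_i x_i)\se \cl(x)$, and joining back over $k$ (again using linearity of $\cl(\bwe_i x_i)$) yields $S\cap \cl(\bwe_i x_i)\se \cl(x)$, as required.

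The difference-closure is the subtle point, and it is where I would invoke the structural results of \cite{suarez25}, namely Theorem~7.20 together with Proposition~7.12; these encode how exactness interacts with the co-Heyting difference of $\sll$ and are precisely what is needed to conclude $S\sm T\in \See(L)$. I expect this to be the main obstacle, because $S\sm T=\bca\{U\mid S\se T\ve U\}$ is far less transparent than the join, and no linearity argument is available to reduce the defining inequality $(S\sm T)\cap \cl(x_i)\se \cl(x)$ directly to the corresponding inequality for $S$. Once both axioms are in hand, $\See(L)$ is a codense subcolocale and the inclusion $\Sb(L)\se \See(L)$ follows as noted above.
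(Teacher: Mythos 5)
Your proposal is correct and, at its core, follows the same route as the paper: the paper's entire justification for the first claim is a citation of Theorem 7.20 and Proposition 7.12 of \cite{suarez25}, and the inclusion $\Sb(L)\se \See(L)$ is obtained, exactly as you obtain it, from $\Sb(L)$ being the smallest codense subcolocale of $\sll$ (Lemma \ref{l: SbL is codense}). Where you differ is that you prove more than the paper writes down: codensity (that $L\in \See(L)$, via the identity surjection) and closure under arbitrary joins are verified directly, the latter by a clean argument combining the characterization of exact sublocales (Lemma \ref{l: characterization of exact sublocales}) with linearity of closed sublocales (Lemmas \ref{l: linear} and \ref{l: properties of difference}); that join-closure argument is sound and is a genuine addition, since the paper leaves it inside the black box of the citation. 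The one part you correctly single out as the real obstacle --- closure under the coframe difference $S\sm T$ --- you defer to the very results the paper itself cites, so your proposal is no less complete than the paper's own proof. One caveat: your description of what Theorem 7.20 and Proposition 7.12 assert (that they ``encode how exactness interacts with the co-Heyting difference'') is a guess; the present paper only records that the subcolocale claim follows from them, not that they are phrased in those terms, so you should not lean on that gloss beyond using them as the paper does.
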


\subsection{Strictly zero-dimensional biframes and Raney extensions}

\subsubsection{Strictly zero-dimensional biframes}

 A \emph{biframe} (see \cite{banaschewski83}) is a triple $\ca{L}=(L_1,L_2,L)$ where $L$ is a frame and $L_1,L_2\se L$ are subframes such that $L_1\cup L_2$ generates $L$ in the sense that $L$ is the smallest subframe containing $L_1\cup L_2$. An element $a_1\in L_1$ is said to be \emph{bicomplemented} if it is complemented in $L$ and its complement is in $L_2$, and similarly for elements of $L_2$. A biframe is said to be \emph{zero-dimensional} if both $L_1$ and $L_2$ are join-generated by their bicomplemented elements. A biframe is said to be \emph{strictly zero-dimensional} if it is zero-dimensional and all elements of $L_1$ are bicomplemented. Strictly zero-dimensional biframes are studied in detail in \cite{manuell15}. We will later see (Lemma \ref{l: very easy}) that dense subcolocales of $\sll$ contain all closed sublocales. In particular, these are embedded as a subcoframe $\cl[L]\se \kD$. We give an equivalent description to the category of strictly zero-dimensional biframes, similar to the one given in Section 3.2 of \cite{manuell15}. The category $\bd{SZDBF}$ is the category:
\begin{enumerate}
    \item Whose objects are pairs $(L,\mk{D})$ where $L$ is a frame and $\mk{D}\se \sll$ a dense subcolocale;
    \item Whose morphisms $f:(L,\kD)\to (M,\mk{E})$ are frame maps $f:L\to M$ such that the anti-isomorphic coframe map $\cl(f):\cl[L]\to \cl[M]$ extends to a coframe map $\overline{f}:\mk{D}\to \mk{E}$.
\end{enumerate}

\subsubsection{Raney extensions}

Raney extensions are introduced in \cite{suarez25}. These structures are inspired by a duality studied in \cite{bezhanishvili20}, based on work by Raney, see \cite{raney52}. Raney extensions may be identified with pairs $(L,\ca{F})$, where $\ca{F}\se \fse(L)^{op}$ is a subcolocale containing all principal filters (see Theorem 3.9 in \cite{suarez25}). Here, we use the isomorphism in \ref{l: SoL and FSEL are isomorphic} to give an equivalent definition. We define $\bd{Raney}$ to be the category:
\begin{enumerate}
    \item Whose objects are pairs $(L,\kF)$ where $L$ is a frame and $\kF\se \So(L)$ a subcolocale containing all open sublocales;
    \item Whose morphisms $f:(L,\kF)\to (M,\mk{G})$ are frame maps $f:L\to M$ such that the isomorphic frame map $\op(f):\op[L]\to \op[M]$ extends to a coframe map $\overline{f}:\kF\to \mk{G}$.
\end{enumerate}

\section{The main adjunction}
\subsection{From subcolocales of \texorpdfstring{$\sll$}{SL} to subcolocales of \texorpdfstring{$\So(L)$}{SoL} via fitting}

In this subsection, we show that the fitting operator $\mi{fit}:\sll\to \So(L)$ is such that its direct image $\mi{fit}[-]:\ca{P}(\sll)\to \ca{P}(\So(L))$ sends codense subcolocales to special kinds of subcolocales of $\So(L)$. First, we work towards a characterization of codense subcolocales of $\sll$. 

\begin{proposition}\label{p: subcolocales of SL characterization}
    An inclusion $\kD\se \sll$ is a subcolocale if and only if it is closed under arbitrary joins and stable under both $-\cap \op(x)$ and $-\cap \cl(x)$ for all $x\in L$.
\end{proposition}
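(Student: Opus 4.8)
The plan is to reduce everything to the single observation that in $\sll$ the open and closed sublocales are mutually supplementary. By \ref{opcl3} we have $\op(x)\ve \cl(x)=L$ and $\op(x)\cap \cl(x)=\{1\}$, so $\op(x)^{*}=\cl(x)$ and $\cl(x)^{*}=\op(x)$; since these sublocales are complemented, Lemma \ref{l: properties of difference}(\ref{properties of difference 1}) then gives
\[
S{\sm}\op(x)=S\cap \cl(x)\qquad\text{and}\qquad S{\sm}\cl(x)=S\cap \op(x)
\]
for every $S\in \sll$ and $x\in L$, meets in $\sll$ being intersections. This identity is the hinge of both directions.

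For the forward implication I would argue as follows. If $\kD\se \sll$ is a subcolocale then it is closed under arbitrary joins by definition, and for $S\in \kD$ and $x\in L$ both $S{\sm}\cl(x)$ and $S{\sm}\op(x)$ lie in $\kD$; by the displayed formulas these equal $S\cap \op(x)$ and $S\cap \cl(x)$, so $\kD$ is stable under $-\cap \op(x)$ and $-\cap \cl(x)$.

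For the converse, the task is to promote stability under the two intersection operations to stability under $S\mapsto S{\sm}T$ for an arbitrary $T\in \sll$. Here I would invoke the representation $T=\bca_{i}(\op(a_i)\ve \cl(b_i))$ valid for every sublocale (recalled in the introduction), distribute the difference over this intersection using Lemma \ref{l: properties of difference}(\ref{properties of difference 3}), and obtain
\[
S{\sm}T=\bve_{i}\bigl(S{\sm}(\op(a_i)\ve \cl(b_i))\bigr).
\]
Each summand I would collapse by means of the coframe identity $u{\sm}(v\ve w)=(u{\sm}v){\sm}w$ — which is immediate from the adjunction $c{\sm}d\le z\iff c\le d\ve z$ characterizing the difference — together with the two supplement formulas above, to get $S{\sm}(\op(a_i)\ve \cl(b_i))=S\cap \cl(a_i)\cap \op(b_i)$.

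Finally, closure of $\kD$ would be assembled step by step: $S\cap \cl(a_i)\in \kD$ by the closed-intersection hypothesis, then $S\cap \cl(a_i)\cap \op(b_i)\in \kD$ by the open-intersection hypothesis, and at last $S{\sm}T=\bve_i (S\cap \cl(a_i)\cap \op(b_i))\in \kD$ since $\kD$ is closed under arbitrary joins. I do not expect a genuine obstacle: the only points needing care are verifying the identity $u{\sm}(v\ve w)=(u{\sm}v){\sm}w$ and the bookkeeping that turns the representation of $T$ into the displayed join of triple intersections. The entire content of the proposition is precisely this reduction of arbitrary differences to differences with open and closed sublocales.
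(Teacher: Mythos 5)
Your proposal is correct and follows essentially the same route as the paper's proof: both directions hinge on Lemma \ref{l: properties of difference}(\ref{properties of difference 1}), and the converse uses the same decomposition $T=\bca_i(\op(a_i)\ve \cl(b_i))$ together with item (\ref{properties of difference 3}) and closure under joins. The only cosmetic difference is that you collapse each summand $S{\sm}(\op(a_i)\ve \cl(b_i))$ in two steps via the identity $u{\sm}(v\ve w)=(u{\sm}v){\sm}w$, whereas the paper does it in one step by treating $\op(a_i)\ve \cl(b_i)$ itself as a complemented element with complement $\cl(a_i)\cap \op(b_i)$.
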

\begin{proof}
  Suppose that $\kD$ satisfies the assumptions in the statement. By definition of subcolocale, it suffices to show that $S\in \kD$ implies $S{\sm}T\in \kD$ for all $T\in \sll$. Let $S\in \kD$ and let $T=\bca_i \op(x_i)\vee \cl(y_i)$. By assumption, for every $i\in I$ we have $S\cap \cl(x_i)\cap \op(y_i)\in \kD$. By \ref{properties of difference 1} of Lemma \ref{l: properties of difference}, this equals $S{\sm}(\op(x_i)\ve \cl(y_i))$. As $\kD$ is closed under all joins and by item \ref{properties of difference 3} of Lemma \ref{l: properties of difference}, $S{\sm}\bca_i \op(x_i)\ve \cl(y_i)\in \kD$, as desired. For the other direction, if $\kF\se \sll$ is a subcolocale, indeed it must be stable under $-\cap \op(x)$ and $-\cap \cl(x)$, as by item \ref{properties of difference 1} of Lemma \ref{l: properties of difference} these are the same as $-{\sm} \cl(x)$ and $-{\sm}\op(x)$, respectively.
\end{proof}

\begin{lemma}\label{l: very easy}
   Let $\kD\se \sll$ be a codense subcolocale. Then:
   \begin{enumerate}
       \item \label{very easy 1}$\kD$ contains all open and closed sublocales;
       \item \label{very easy 2}$\kD$ is meet-generated by the elements of the form $\op(x)\vee \op(y)$.
   \end{enumerate}
\end{lemma}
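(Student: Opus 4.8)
The plan is to take the two parts in order: part~\ref{very easy 1} follows immediately from the characterization of subcolocales together with codensity, and it then feeds directly into part~\ref{very easy 2}, which I will read off from the standard meet-representation of an arbitrary sublocale.

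For part~\ref{very easy 1}, I would start from the fact that, being codense, $\kD$ contains the top element $L$ of $\sll$. By Proposition~\ref{p: subcolocales of SL characterization}, every subcolocale of $\sll$ is stable under the operations $-\cap\op(x)$ and $-\cap\cl(x)$ for all $x\in L$; in particular $\kD$ is. Applying these to $L\in\kD$ gives $\op(x)=L\cap\op(x)\in\kD$ and $\cl(x)=L\cap\cl(x)\in\kD$ for every $x\in L$, which is exactly the claim that $\kD$ contains all open and all closed sublocales.

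For part~\ref{very easy 2}, the key ingredient is the representation of sublocales recorded earlier, namely that every $S\se L$ satisfies $S=\bca\{\op(x)\vee\cl(y)\mid S\se\op(x)\vee\cl(y)\}$. I would fix $S\in\kD$ and observe that each generator occurring on the right already lies in $\kD$: by part~\ref{very easy 1} both $\op(x)$ and $\cl(y)$ are in $\kD$, and $\kD$, being a subcolocale, is closed under arbitrary joins, so $\op(x)\vee\cl(y)\in\kD$. Consequently every element of $\kD$ is an intersection of members of $\kD$ of the form $\op(x)\vee\cl(y)$, which is precisely the statement that $\kD$ is meet-generated by these sublocales.

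I do not expect a genuine obstacle; once part~\ref{very easy 1} is established, part~\ref{very easy 2} is a direct consequence of the ambient meet-representation of sublocales. The single point needing care is that the generators must be checked to lie in $\kD$ and not merely in $\sll$, and this is exactly where both halves of part~\ref{very easy 1} and closure under joins are used: the generators supplied by the representation are the mixed sublocales $\op(x)\vee\cl(y)$, and all of these are available in $\kD$ precisely because $\kD$ contains every open and every closed sublocale.
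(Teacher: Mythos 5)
Your proposal is correct and follows essentially the same route as the paper: part (1) applies the stability of subcolocales under $-\cap\op(x)$ and $-\cap\cl(x)$ (Proposition \ref{p: subcolocales of SL characterization}) to $L\in\kD$, and part (2) combines the ambient representation $S=\bca\{\op(x)\ve\cl(y)\mid S\se\op(x)\ve\cl(y)\}$ with part (1) and closure under joins, even streamlining the paper's detour through the conucleus $\nu_{\kD}$, since for $S\in\kD$ the $\sll$-intersection of the generators is $S$ itself and hence already coincides with the meet computed in $\kD$. You also rightly read the generators as $\op(x)\ve\cl(y)$ rather than the statement's $\op(x)\ve\op(y)$, which matches both the paper's own proof and its later use of the lemma.
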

\begin{proof}
  If $\kD\se \sll$ is codense, by definition it must contain $L$. By Lemma \ref{p: subcolocales of SL characterization}, then, it must contain $L{\sm}\op(x)$ and $L{\sm}\cl(x)$ for all $x\in L$, but by \ref{properties of difference 1} of Lemma \ref{l: properties of difference} these equal $\cl(x)$ and $\op(x)$, respectively. Every element of $\kD$ is $\nu_{\kD}(S)$ for some sublocale $S\in \sll$, and $S=\bca_i \op(x_i)\ve \cl(y_i)$ for some $x_i,y_i\in L$. Then, every element of $\kD$ can be written as $\nu_{\kD}(\bca_i \op(x_i)\ve \cl(y_i))$ for some $x_i,y_i\in L$. This equals $\bwe^{\kD}_i\nu_{\kD}(\op(x_i)\ve \cl(x_i))$ by Lemma \ref{l: subcolocales very basic}. As $\kD$ contains all open and closed sublocales by the first item, and as it is closed under joins by \ref{p: subcolocales of SL characterization}, $\nu_{\kD}(\op(x_i)\ve \cl(y_i))=\op(x_i)\ve \cl(y_i)$, and our claim is proven.
\end{proof}

We now characterize the subcolocales of $\So(L)$ in a similar manner. We call $\bve^{\mi{fit}}$ and $\sm^{\mi{fit}}$ the joins and the coframe differences in $\So(L)$, respectively.
\begin{lemma}\label{l: difference in SoL}
    In $\So(L)$, for all $F_i,F,G$:
    \begin{enumerate}
        \item $\bve^{\mi{fit}}_i F_i=\mi{fit}(\bve_i F_i)$;
        \item  $F{\sm}^{fit}G=\mi{fit}(F{\sm}G)$.
    \end{enumerate}
\end{lemma}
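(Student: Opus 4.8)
The plan is to regard $\mi{fit}$ as the closure operator on the coframe $\sll$ whose fixpoints are exactly the fitted sublocales $\So(L)$, and to read off both identities from the generic behaviour of joins and differences in the image of a closure operator. The one structural fact I will establish first is that $\So(L)$ is closed under \emph{binary} joins taken in $\sll$: using the coframe distributive law $\op(a_j)\vee\bca_k\op(b_k)=\bca_k(\op(a_j)\vee\op(b_k))$ together with $\op(a)\vee\op(b)=\op(a\vee b)$ from item \ref{opcl4}, any two fitted sublocales $\bca_j\op(a_j)$ and $\bca_k\op(b_k)$ join in $\sll$ to $\bca_{j,k}\op(a_j\vee b_k)$, again an intersection of opens and hence fitted. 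Dually, $\So(L)$ is closed under arbitrary intersections (and contains $L=\op(1)$), so it is a Moore family, $\mi{fit}$ is its associated closure operator, and meets in $\So(L)$ coincide with meets (intersections) in $\sll$.

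For item 1 I would invoke the standard ``join in the image of a closure operator'' computation. Since $\mi{fit}(\bve_i F_i)$ is fitted and contains every $F_i$, it is an upper bound of $\{F_i\}$ in $\So(L)$; and any fitted upper bound $H$ satisfies $H\supseteq\bve_i F_i$, so by monotonicity and idempotency $H=\mi{fit}(H)\supseteq\mi{fit}(\bve_i F_i)$. Hence $\mi{fit}(\bve_i F_i)$ is the least upper bound, i.e.\ $\bve^{\mi{fit}}_i F_i=\mi{fit}(\bve_i F_i)$. This uses nothing about $\sll$ beyond closure under meets.

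For item 2 I would argue via the adjunction characterizing the coframe difference in each coframe: in $\sll$, $F\sm G\se S\iff F\se G\vee S$; and in $\So(L)$, $F\sm^{\mi{fit}}G\se H\iff F\se G\vee^{\mi{fit}}H=\mi{fit}(G\vee H)$ for $H\in\So(L)$, the join rewritten via item 1. For the inclusion $F\sm^{\mi{fit}}G\se\mi{fit}(F\sm G)$, starting from $F\se G\vee(F\sm G)$ (the $\sll$-adjunction applied to $F\sm G\le F\sm G$) and applying extensivity and monotonicity of $\mi{fit}$ gives $F\se\mi{fit}(G\vee\mi{fit}(F\sm G))=G\vee^{\mi{fit}}\mi{fit}(F\sm G)$, so by leastness $F\sm^{\mi{fit}}G\se\mi{fit}(F\sm G)$. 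For the reverse inclusion I set $H=F\sm^{\mi{fit}}G\in\So(L)$, so that $F\se\mi{fit}(G\vee H)$; here the closure is vacuous because $G\vee H$ is already fitted by the closure-under-binary-joins fact, giving $F\se G\vee H$, hence $F\sm G\se H$ in $\sll$ and finally $\mi{fit}(F\sm G)\se\mi{fit}(H)=H=F\sm^{\mi{fit}}G$.

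The only genuine obstacle is the minimality half of item 2, and it rests entirely on the observation that a binary join of fitted sublocales is again fitted: without it, $F\se\mi{fit}(G\vee H)$ would not upgrade to $F\se G\vee H$, and the difference in $\So(L)$ could in principle exceed $\mi{fit}(F\sm G)$. Everything else is formal manipulation of the closure operator and the two difference adjunctions, so I would prove the closure-under-binary-joins statement up front and then let both identities fall out routinely.
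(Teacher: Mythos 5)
Your proof is correct, and item 1 coincides with the paper's argument; for item 2, however, you take a genuinely different route. The paper shows that the two fitted sublocales $F{\sm}^{fit}G$ and $\mi{fit}(F{\sm}G)$ lie below exactly the same open sublocales, via the chain of equivalences $F{\sm}^{fit}G\se \op(x)$ iff $F\se G\vee \op(x)$ iff $F{\sm}G\se \op(x)$ iff $\mi{fit}(F{\sm}G)\se \op(x)$, and then concludes equality because fitted sublocales are intersections of opens; you instead prove the two inclusions separately by pushing the difference adjunctions of $\sll$ and of $\So(L)$ against each other. Both arguments turn on the same structural ingredient, namely that the join in $\sll$ of a fitted sublocale with an open (or another fitted) sublocale is again fitted: you isolate and prove this up front from the coframe distributive law and $\op(a)\vee\op(b)=\op(a\vee b)$, while the paper uses it silently in asserting $F{\sm}^{fit}G\se \op(x)$ iff $F\se G\vee\op(x)$ (this step needs $G\vee^{\mi{fit}}\op(x)=G\vee\op(x)$); in the paper this is licensed by the preliminary remark that $\So(L)$ is a subcoframe of $\sll$, which you could simply have cited instead of reproving. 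What each approach buys: yours is fully self-contained and makes the hidden hypothesis explicit, which is a virtue given how easy it is to overlook; the paper's is shorter and exploits the fact that in a coframe meet-generated by the opens, testing containment against opens is the canonical separation argument --- the same device that drives Lemma \ref{l: intersection and fitting} and Proposition \ref{p: from denseSL to denseSoL} later on.
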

\begin{proof}
For the joins, it suffices to notice that if $F_i\se F$ is an upper bound in $\So(L)$, as $F$ is a fixpoint of $\mi{fit}$ also $\mi{fit}(\bve_i F_i)\se F$. By its definition, the difference $F{\sm}^{fit}G$ is such that $F{\sm}^{fit}G\se \op(x)$ if and only if $F\se G\vee \op(x)$ for all $x\in L$. But the following are also equivalent:
\begin{prooftree}
\AxiomC{$F\se G\vee \op(x)$}
\UnaryInfC{$F{\sm}G\se \op(x)$}
\UnaryInfC{$\mi{fit}(F{\sm}G)\se \op(x))$.}
\end{prooftree}
Since fitted sublocales are intersections of open ones, and we have shown that the fitted sublocales $F{\sm}^{fit}G$ and $\mi{fit}(F{\sm}G)$ are contained in the same opens, they must be equal.
\end{proof}

\begin{lemma}\label{l: fit of F cap cx}
    For a frame $L$ and $F\in \So(L)$, and $x\in L$
    \[
    \mi{fit}(F\cap \cl(x))=F{\sm}^{fit}\op(x).
    \]
\end{lemma}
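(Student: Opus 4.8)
The plan is to reduce everything to the coframe difference in $\sll$ and then exploit the fact that open sublocales are complemented. Concretely, since $\op(x)$ is (trivially) an intersection of open sublocales, it is a legitimate element of $\So(L)$, so the difference $F{\sm}^{fit}\op(x)$ makes sense; by item 2 of Lemma \ref{l: difference in SoL} it equals $\mi{fit}(F{\sm}\op(x))$, where now $F{\sm}\op(x)$ is computed in the ambient coframe $\sll$. Thus it suffices to show $\mi{fit}(F{\sm}\op(x))=\mi{fit}(F\cap\cl(x))$, and for this I would argue that already $F{\sm}\op(x)=F\cap\cl(x)$ inside $\sll$.

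The key observation is that $\op(x)$ is complemented in $\sll$: by item \ref{opcl3} we have $\op(x)\cap\cl(x)=\{1\}$ and $\op(x)\vee\cl(x)=L$, so $\cl(x)$ is its complement. In a coframe the supplement of a complemented element coincides with its complement, hence $\op(x)^{*}=\cl(x)$. Applying item \ref{properties of difference 1} of Lemma \ref{l: properties of difference} with the complemented element $\op(x)$ then gives $F{\sm}\op(x)=F\we\op(x)^{*}=F\cap\cl(x)$, where I use that binary meets in $\sll$ are set-theoretic intersections.

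Combining the two computations yields $F{\sm}^{fit}\op(x)=\mi{fit}(F{\sm}\op(x))=\mi{fit}(F\cap\cl(x))$, which is the claim. I do not anticipate a genuine obstacle here: the only points that require a moment's care are checking that $\op(x)\in\So(L)$ so that $F{\sm}^{fit}\op(x)$ is defined, and verifying that the supplement of the complemented element $\op(x)$ is indeed $\cl(x)$; both are immediate from item \ref{opcl3}. I would also flag explicitly that one cannot drop the outer $\mi{fit}$ on the left-hand side, since $F\cap\cl(x)$ need not be fitted even when $F$ is, which is precisely why the fitting appears in the statement.
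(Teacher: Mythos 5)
Your proof is correct and follows essentially the same route as the paper's: both arguments combine item \ref{properties of difference 1} of Lemma \ref{l: properties of difference} (applied to the complemented sublocale $\op(x)$, whose supplement is $\cl(x)$) with item 2 of Lemma \ref{l: difference in SoL}, merely in the opposite order. The extra care you take in checking that $\op(x)\in\So(L)$ and that the supplement of $\op(x)$ is its complement $\cl(x)$ is sound but implicit in the paper's one-line proof.
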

\begin{proof}
    By \ref{properties of difference 1} of Lemma \ref{l: properties of difference}, $F\cap \cl(x)=F{\sm}\op(x)$, and so $\mi{fit}(F\cap \cl(x))=\mi{fit}(F{\sm}\op(x))$. By Lemma \ref{l: difference in SoL}, $\mi{fit}(F{\sm}\op(x))=F{\sm}^{\mi{fit}}\op(x)$.
\end{proof}

\begin{proposition}\label{p: subcolocales of SoL characterization}
    A collection $\mk{F}\se \So(L)$ is a subcolocale if and only if it is closed under arbitrary joins and is stable under $\mi{fit}(-\cap \cl(x))$ for all $x\in L$.
\end{proposition}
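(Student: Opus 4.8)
The plan is to mirror the proof of Proposition \ref{p: subcolocales of SL characterization}, but now working inside the coframe $\So(L)$ instead of $\sll$. The decisive simplification is that every element of $\So(L)$ is fitted, hence an intersection of open sublocales, so the generic subtrahend of a difference ${\sm}^{\mi{fit}}$ is a meet of opens $\op(x_i)$; this is exactly why a single family of operators $\mi{fit}(-\cap \cl(x))$ suffices here, whereas the two families $-\cap \op(x)$ and $-\cap \cl(x)$ were needed for $\sll$. Throughout I would use Lemma \ref{l: fit of F cap cx} to identify $\mi{fit}(F\cap \cl(x))$ with $F{\sm}^{\mi{fit}}\op(x)$, and keep in mind that meets in $\So(L)$ are set-theoretic intersections while its joins are $\bve^{\mi{fit}}$.

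For the forward implication, suppose $\mk{F}\se \So(L)$ is a subcolocale. By definition it is closed under the joins of $\So(L)$, namely under $\bve^{\mi{fit}}$. Since each open sublocale $\op(x)$ is (trivially) fitted and so lies in $\So(L)$, the defining closure property of a subcolocale yields $F{\sm}^{\mi{fit}}\op(x)\in \mk{F}$ for every $F\in \mk{F}$; by Lemma \ref{l: fit of F cap cx} this is precisely $\mi{fit}(F\cap \cl(x))$, so $\mk{F}$ is stable under $\mi{fit}(-\cap \cl(x))$ for all $x\in L$.

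For the converse, assume $\mk{F}$ is closed under $\bve^{\mi{fit}}$ and stable under every $\mi{fit}(-\cap \cl(x))$. It remains to check closure under $F{\sm}^{\mi{fit}}G$ for arbitrary $F\in \mk{F}$ and $G\in \So(L)$. Writing $G=\bca_i \op(x_i)$ as an intersection of opens, so that $G=\bwe^{\mi{fit}}_i \op(x_i)$ since meets in $\So(L)$ are intersections, and applying item \ref{properties of difference 3} of Lemma \ref{l: properties of difference} in the coframe $\So(L)$, I obtain
\[
F{\sm}^{\mi{fit}}G=\bve^{\mi{fit}}_i\bigl(F{\sm}^{\mi{fit}}\op(x_i)\bigr).
\]
Each summand equals $\mi{fit}(F\cap \cl(x_i))$ by Lemma \ref{l: fit of F cap cx}, hence lies in $\mk{F}$ by hypothesis; as $\mk{F}$ is closed under $\bve^{\mi{fit}}$, the whole join lies in $\mk{F}$. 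Thus $\mk{F}$ is a subcolocale.

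The only genuine subtlety, and the point I would verify most carefully, is the bookkeeping of which lattice operations are in play: item \ref{properties of difference 3} of Lemma \ref{l: properties of difference} must be invoked for the coframe $\So(L)$ — which is legitimate since $\So(L)$ is a coframe — with its meet $\bwe^{\mi{fit}}$ read as plain intersection and its join read as $\bve^{\mi{fit}}$. Conflating these with the operations of the ambient $\sll$ would break the computation, but everything else is a faithful transcription of the $\sll$ argument.
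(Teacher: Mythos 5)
Your proof is correct and follows essentially the same route as the paper's: both directions rest on Lemma \ref{l: fit of F cap cx}, on writing an arbitrary fitted sublocale as an intersection of opens, and on item \ref{properties of difference 3} of Lemma \ref{l: properties of difference} applied in the coframe $\So(L)$, with closure under $\bve^{\mi{fit}}$ finishing the argument. Your explicit bookkeeping of which operations (intersection as meet, $\bve^{\mi{fit}}$ as join) are in play is a welcome clarification but does not change the substance.
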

\begin{proof}
    Suppose that $\kF\se \So(L)$ satisfies the assumptions in the claim. We check that $F\in \kF$ implies $F{\sm}^{fit}\bca_i \op(x_i)\in \kF$ for all families $x_i\in L$. If $F\in \kF$, by our assumption, $\mi{fit}(F\cap \cl(x_i))\in \kF$ for every $i\in I$. By Lemma \ref{l: fit of F cap cx}, this means that $F{\sm}^{fit}\op(x_i)\in \kF$. By \ref{properties of difference 3} of Lemma \ref{l: properties of difference}, $F{\sm}^{fit}\bca_i \op(x_i)=\bve_i F{\sm}^{fit}\op(x_i)$, and this is in $\kF$ as we assumed this collection is closed under all joins. Conversely, if $\kF$ is a subcolocale, it is closed under all joins by definition, and by definition also $F{\sm}^{\mi{fit}}\op(x)\in \kF$ for all $x\in L$. By Lemma \ref{l: fit of F cap cx}, then, $\mi{fit}(F\cap \cl(x))\in \kF$ for all $x\in L$. 
\end{proof}

Finally, we want to show that for every codense subcolocale $\kD\se \sll$ the inclusion $\mi{fit}[\kD]\se \So(L)$ is a subcolocale containing all opens. We also look at how the conuclei of these interact.

\begin{lemma}\label{o:wF}
Let $\kD\se \sll$ be a codense subcolocale. For every $S\in \sll$ and $x\in L$, 
\begin{enumerate}
    \item $\nu_{\kD}(S\cap \op(x))=\nu_{\kD}(S)\cap \op(x)$;
    \item $\nu_{\kD}(S\cap \cl(x))=\nu_{\kD}(S)\cap \cl(x)$.
\end{enumerate}

\end{lemma}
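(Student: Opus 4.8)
The plan is to use the description of the conucleus $\nu_{\kD}$ as an interior operator: for any $T\in\sll$, the sublocale $\nu_{\kD}(T)$ is the largest element of $\kD$ contained in $T$. The whole argument then reduces to one structural fact, namely that $\kD$ is stable under the operations $-\cap\op(x)$ and $-\cap\cl(x)$. This is immediate from Proposition \ref{p: subcolocales of SL characterization}; alternatively one argues directly: since $\op(x)$ and $\cl(x)$ are complemented in $\sll$ with $\op(x)^{*}=\cl(x)$ and $\cl(x)^{*}=\op(x)$, item \ref{properties of difference 1} of Lemma \ref{l: properties of difference} gives $d\cap\op(x)=d{\sm}\cl(x)$ and $d\cap\cl(x)=d{\sm}\op(x)$, both of which lie in $\kD$ whenever $d\in\kD$, by the definition of subcolocale.

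I would prove the first identity by a double inclusion and then observe that the second follows by the identical argument with the roles of $\op(x)$ and $\cl(x)$ interchanged, since both are complemented. For the inclusion $\nu_{\kD}(S)\cap\op(x)\se\nu_{\kD}(S\cap\op(x))$: the sublocale $\nu_{\kD}(S)\cap\op(x)$ lies in $\kD$ by the stability fact above, and it is contained in $S\cap\op(x)$ because $\nu_{\kD}(S)\se S$; being an element of $\kD$ below $S\cap\op(x)$, it is therefore below the largest such element, namely $\nu_{\kD}(S\cap\op(x))$. For the reverse inclusion: $\nu_{\kD}(S\cap\op(x))$ is an element of $\kD$ contained in $S\cap\op(x)\se S$, hence contained in $\nu_{\kD}(S)$; it is moreover contained in $S\cap\op(x)\se\op(x)$; combining these two containments places it inside $\nu_{\kD}(S)\cap\op(x)$.

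I do not expect a genuine obstacle here once the stability of $\kD$ under intersection with open and closed sublocales is in place; the remainder is a formal manipulation of the ``largest element of $\kD$ below $T$'' characterization. The only point deserving a little care is to ensure that both the open and the closed cases are covered, which is why it is worth isolating at the outset that $\op(x)$ and $\cl(x)$ are \emph{both} complemented: this is exactly what makes the two statements perfectly symmetric and allows a single argument to serve for both.
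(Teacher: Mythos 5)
Your proof is correct, and it rests on the same key ingredient as the paper's: stability of $\kD$ under $-\cap\op(x)$ and $-\cap\cl(x)$, i.e.\ Proposition \ref{p: subcolocales of SL characterization}. The difference is that yours is strictly more economical. The paper's proof cites \emph{two} facts: the stability just mentioned, and also that $\op(x),\cl(x)\in\kD$ (item \ref{very easy 1} of Lemma \ref{l: very easy}); the latter is where the codensity hypothesis enters, since it is exactly codense subcolocales that contain all opens and closeds. Your double-inclusion argument via the ``largest element of $\kD$ below $T$'' description of the conucleus never uses membership of $\op(x)$ or $\cl(x)$ in $\kD$: the inclusion $\nu_{\kD}(S)\cap\op(x)\se\nu_{\kD}(S\cap\op(x))$ needs only stability plus deflation, and the reverse inclusion needs only deflation and monotonicity. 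Consequently your argument proves the lemma for \emph{every} subcolocale of $\sll$, codense or not, which is a (mildly) stronger statement than the one recorded in the paper; the symmetric treatment of the open and closed cases via complementedness is also exactly right. The only thing worth making explicit if you write this up is the justification that $\nu_{\kD}(T)$ really is the largest element of $\kD$ contained in $T$: this uses that $\kD$ is closed under arbitrary joins and that joins in $\kD$ agree with those of $\sll$ (Lemma \ref{l: subcolocales very basic}), so that $\bigvee\{D\in\kD\mid D\se T\}$ lies in $\kD$ and is below $T$.
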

\begin{proof}
    The claims holds because $\op(x),\cl(x)\in \kD$ by item \ref{very easy 1} of Lemma \ref{l: very easy}, and $\kD$ is stable under $-\cap \op(x)$ and $-\cap \cl(x)$ by Proposition \ref{p: subcolocales of SL characterization}.
\end{proof}

\begin{lemma}\label{l: intersection and fitting}
    For a sublocale $S\se L$, $\mi{fit}(S\cap \cl(x))=\mi{fit}(\mi{fit}(S)\cap \cl(x))$ for all $x\in L$. 
\end{lemma}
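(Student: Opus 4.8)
The plan is to prove the two inclusions separately, using that both sides are fitted sublocales, i.e. fixpoints of the closure operator $\mi{fit}$. Since $S\se\mi{fit}(S)$ and intersection is monotone, we have $S\cap\cl(x)\se\mi{fit}(S)\cap\cl(x)$, and applying the monotone operator $\mi{fit}$ immediately yields $\mi{fit}(S\cap\cl(x))\se\mi{fit}(\mi{fit}(S)\cap\cl(x))$; this is the easy inclusion. For the reverse inclusion it suffices, because $\mi{fit}(S\cap\cl(x))$ is a fixpoint of the monotone idempotent $\mi{fit}$, to establish the single containment
\[
\mi{fit}(S)\cap\cl(x)\se\mi{fit}(S\cap\cl(x)),
\]
since applying $\mi{fit}$ to both sides then gives $\mi{fit}(\mi{fit}(S)\cap\cl(x))\se\mi{fit}(\mi{fit}(S\cap\cl(x)))=\mi{fit}(S\cap\cl(x))$.

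To prove this containment I would use the description $\mi{fit}(S\cap\cl(x))=\bca\{\op(y)\mid S\cap\cl(x)\se\op(y)\}$ and check that every open sublocale $\op(y)$ on the right also contains $\mi{fit}(S)\cap\cl(x)$. Fix $y\in L$ with $S\cap\cl(x)\se\op(y)$. The key step is to rewrite the hypothesis via the difference operator: since $\op(x)$ is complemented with complement $\cl(x)$, item \ref{properties of difference 1} of Lemma \ref{l: properties of difference} gives $S\cap\cl(x)=S{\sm}\op(x)$, and by the defining adjunction $a{\sm}b\se c \iff a\se b\ve c$ of the difference in $\sll$, the condition $S{\sm}\op(x)\se\op(y)$ is equivalent to $S\se\op(x)\ve\op(y)=\op(x\ve y)$. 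Hence $\mi{fit}(S)\se\op(x\ve y)$, so $\mi{fit}(S)\cap\cl(x)\se\op(x\ve y)\cap\cl(x)$. Finally, because $\cl(x)$ is complemented and therefore linear (Lemma \ref{l: linear}), I can distribute the meet across the join $\op(x\ve y)=\op(x)\ve\op(y)$ to get $\op(x\ve y)\cap\cl(x)=(\op(x)\cap\cl(x))\ve(\op(y)\cap\cl(x))=\op(y)\cap\cl(x)\se\op(y)$, using $\op(x)\cap\cl(x)=\{1\}$. This shows $\mi{fit}(S)\cap\cl(x)\se\op(y)$ for arbitrary such $y$, hence $\mi{fit}(S)\cap\cl(x)\se\mi{fit}(S\cap\cl(x))$, as needed.

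The crucial feature is that neither of the two delicate moves, translating $S\cap\cl(x)\se\op(y)$ into $S\se\op(x\ve y)$ through the difference–join adjunction, and splitting $\op(x\ve y)\cap\cl(x)$ using linearity, requires $S$ to be fitted or complemented, which is exactly why the identity holds for an arbitrary sublocale $S$. I expect the main obstacle to be purely bookkeeping: keeping the direction of the difference adjunction correct and remembering that it is linearity of $\cl(x)$, not generic coframe distributivity, that permits distributing a meet over a join. An alternative route would recast the claim through Lemmas \ref{l: difference in SoL} and \ref{l: fit of F cap cx} as the identity $\mi{fit}(S{\sm}\op(x))=\mi{fit}(S){\sm}^{fit}\op(x)$, but the direct computation above seems cleaner and more self-contained.
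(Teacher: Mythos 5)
Your proof is correct and is essentially the paper's own argument: both hinge on the observation that $S\cap\cl(x)\se\op(y)$ iff $S\se\op(x)\ve\op(y)$ iff $\mi{fit}(S)\se\op(x)\ve\op(y)$ iff $\mi{fit}(S)\cap\cl(x)\se\op(y)$, so the two sublocales lie in exactly the same opens and hence have the same fitting. The only differences are presentational — you split the equality into two inclusions and use linearity of $\cl(x)$ to distribute in the final step, where the paper runs the complementation adjunction as a chain of equivalences — but the key moves are the same.
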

\begin{proof}
The following are equivalent statements for all $y\in L$. At each step we only use basic properties of fitting and of open and closed sublocales.
    \begin{prooftree}
\AxiomC{$\mi{fit}(S\cap \cl(x))\se \op(y)$}
\UnaryInfC{$S\cap \cl(x)\se \op(y)$}
\UnaryInfC{$S\se \op(y)\ve \op(x)$}
\UnaryInfC{$\mi{fit}(S)\se \op(y)\ve \op(x)$}
\UnaryInfC{$\mi{fit}(S)\cap \cl(x)\se \op(y)$.}
\end{prooftree}
Indeed, then, $\mi{fit}(S\cap \cl(x))=\mi{fit}(\mi{fit}(S)\cap \cl(x))$ as desired.
\end{proof}

\begin{proposition}\label{p: from denseSL to denseSoL}
    Let $\kD\se \sll$ be a codense subcolocale.
    \begin{enumerate}
        \item \label{from dense SL item 1}The collection $\kF:=\mi{fit}[\kD]\se \So(L)$ is a subcolocale containing all opens;
        \item \label{from dense SL item 2}$\nu_{\kF}(F)=\mi{fit}(\nu_{\kD}(F))$ for all $F\in \So(L)$;
        \item \label{from dense SL item 3}$\bwe^{\kF}F_i=\mi{fit}(\nu_{\kD}(\bca_i F_i)$ for $D_i\in \kD$.
    \end{enumerate}
    
\end{proposition}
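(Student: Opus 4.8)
The plan is to establish item \ref{from dense SL item 1} via the characterization in Proposition \ref{p: subcolocales of SoL characterization}, and then to read off items \ref{from dense SL item 2} and \ref{from dense SL item 3} from the description $\kF=\mi{fit}[\kD]$ together with the conucleus and meet formulas already available.

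First, for item \ref{from dense SL item 1}, I note that $\kF$ contains all opens: each $\op(x)$ lies in $\kD$ by item \ref{very easy 1} of Lemma \ref{l: very easy}, and is fitted, so $\op(x)=\mi{fit}(\op(x))\in\mi{fit}[\kD]=\kF$. To see $\kF$ is a subcolocale I check the two conditions of Proposition \ref{p: subcolocales of SoL characterization}. Writing elements of $\kF$ as $\mi{fit}(D_i)$ with $D_i\in\kD$, the join in $\So(L)$ is $\bve^{\mi{fit}}_i\mi{fit}(D_i)=\mi{fit}(\bve_i\mi{fit}(D_i))$ by Lemma \ref{l: difference in SoL}; the standard closure-operator identity $\mi{fit}(\bve_i\mi{fit}(D_i))=\mi{fit}(\bve_i D_i)$ then reduces this to $\mi{fit}(\bve_i D_i)$, which lies in $\kF$ because $\kD$ is closed under joins, so that $\bve_i D_i\in\kD$. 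For stability under $\mi{fit}(-\cap\cl(x))$, take $F=\mi{fit}(D)$ with $D\in\kD$; Lemma \ref{l: intersection and fitting} gives $\mi{fit}(F\cap\cl(x))=\mi{fit}(D\cap\cl(x))$, and $D\cap\cl(x)\in\kD$ by Proposition \ref{p: subcolocales of SL characterization}, so $\mi{fit}(F\cap\cl(x))\in\kF$.

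For item \ref{from dense SL item 2}, I argue that $\mi{fit}(\nu_{\kD}(F))$ is the greatest element of $\kF$ below $F$, which is exactly $\nu_{\kF}(F)$. It is in $\kF$ since $\nu_{\kD}(F)\in\kD$, and it is below $F$ since $\nu_{\kD}(F)\se F$ and $F$ is a fixpoint of $\mi{fit}$. For maximality, any $G=\mi{fit}(D)\in\kF$ with $G\se F$ has $D\se\mi{fit}(D)=G\se F$ with $D\in\kD$, whence $D\se\nu_{\kD}(F)$ and so $G=\mi{fit}(D)\se\mi{fit}(\nu_{\kD}(F))$. Item \ref{from dense SL item 3} is then immediate: Lemma \ref{l: subcolocales very basic} gives $\bwe^{\kF}_i F_i=\nu_{\kF}(\bwe_i F_i)$ where the inner meet is taken in $\So(L)$, and since $\So(L)$ is a subcoframe of $\sll$ that meet is the intersection $\bca_i F_i$; item \ref{from dense SL item 2} then rewrites the right-hand side as $\mi{fit}(\nu_{\kD}(\bca_i F_i))$.

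The step I expect to be most delicate is the stability condition in item \ref{from dense SL item 1}: everything there hinges on Lemma \ref{l: intersection and fitting}, which lets me pull the fitting outside and replace $\mi{fit}(\mi{fit}(D)\cap\cl(x))$ by $\mi{fit}(D\cap\cl(x))$, thereby landing back inside $\mi{fit}[\kD]$; without this identity the image would not obviously be stable. A secondary point requiring care is that the relevant joins in the subcolocale test are those of $\So(L)$ rather than the ambient ones of $\sll$, so one must invoke Lemma \ref{l: difference in SoL} and the closure-operator identity at precisely the right moment.
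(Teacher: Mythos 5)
Your proof is correct, and for items \ref{from dense SL item 1} and \ref{from dense SL item 3} it is essentially the paper's own argument: the same reduction to the characterization in Proposition \ref{p: subcolocales of SoL characterization}, the same use of Lemma \ref{l: very easy} to get the opens, the same appeal to Lemma \ref{l: intersection and fitting} as the crucial step for stability under $\mi{fit}(-\cap \cl(x))$ (the paper handles join-closure by simply noting that $\mi{fit}:\sll\to \So(L)$ preserves joins as the corestriction of a closure operator, which is exactly the closure-operator identity you spell out), and the same derivation of item \ref{from dense SL item 3} from item \ref{from dense SL item 2} together with Lemma \ref{l: subcolocales very basic}. The one place you genuinely diverge is item \ref{from dense SL item 2}. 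The paper proves $\nu_{\kF}(F)=\mi{fit}(\nu_{\kD}(F))$ by a chain of equivalences showing that the two sides are contained in exactly the same open sublocales $\op(x)$, and then uses the fact that fitted sublocales are intersections of opens, so that two fitted sublocales below the same opens must coincide. You instead verify directly that $\mi{fit}(\nu_{\kD}(F))$ has the defining property of the conucleus: it lies in $\kF$, it is below $F$ (since $\nu_{\kD}$ is deflationary and $F$ is a fixpoint of $\mi{fit}$), and it dominates every $G=\mi{fit}(D)\in \kF$ with $G\se F$, because such a $D$ satisfies $D\se \mi{fit}(D)=G\se F$, hence $D\se \nu_{\kD}(F)$ and $G\se \mi{fit}(\nu_{\kD}(F))$ by monotonicity. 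Both arguments are complete; yours is slightly more economical, needing only monotonicity of $\mi{fit}$ and the universal property of conuclei, while the paper's illustrates the test-against-opens technique it reuses elsewhere (e.g.\ in Lemmas \ref{l: difference in SoL} and \ref{l: intersection and fitting}).
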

\begin{proof}
Let us prove the three items in turn.
\begin{enumerate}
    \item We use the characterization in Proposition \ref{p: subcolocales of SoL characterization}. Closure of $\mi{fit}[\kD]\se \So(L)$ under all joins follows from the fact that fitting is a closure operator on $\sll$, and so the map $\mi{fit}:\sll\to \So(L)$ to its fixpoints preserves all joins. Next, we have to show that for $F\in \mi{fit}[\kD]$ the element $\mi{fit}(F\cap \cl(x))$ is in $\mi{fit}[\kD]$, by Proposition \ref{p: subcolocales of SoL characterization}. This holds because for $D\in \kD$ such that $F=\mi{fit}(D)$ we have $D\cap \cl(x)\in \kD$, by Proposition \ref{p: subcolocales of SL characterization} and $\mi{fit}(D\cap \cl(x))=\mi{fit}(\mi{fit}(D)\cap \cl(x))$ by Lemma \ref{l: intersection and fitting}. Then, $\mi{fit}[\mk{D}]\se \So(L)$ is a subcolocale. Since $\op(x)\in \mk{D}$ for all $x\in L$, by \ref{very easy 1} of Lemma \ref{l: very easy}, $\mi{fit}[\mk{D}]$ contains all opens.  
    \item The following are equivalent statements. Again, for each derivation we are only using basic facts about fitting and the conucleus $\nu_{\kD}$.
      \begin{prooftree}
      \AxiomC{$\mi{fit}(\nu_{\kD}(F))\se \op(x)$}
      \UnaryInfC{$\nu_{\kD}(F)\se \op(x)$}
      \UnaryInfC{$\text{$\mi{fit}(D)\se F$ implies $\mi{fit}(D)\se \op(x)$ for all $D\in \kD$}$}
      \UnaryInfC{$\text{$D\se F$ implies $D\se \op(x)$ for all $D\in \kD$}$}
      \UnaryInfC{$\nu_{\kD}(\mi{fit}(F))\se \nu_{\kD}(\op(x))$}
      \UnaryInfC{$\nu_{\kD}(\mi{fit}(F))\se \op(x)$.}
  \end{prooftree}
    \item This follows from (2) and from Lemma \ref{l: subcolocales very basic}.\qedhere
\end{enumerate}

\end{proof}
We have found a (clearly monotone) map $\mi{fit}[-]:\ca{CD}(\sll)\to \ca{C}(\So(L))$ from codense subcolocales of $\sll$ to subcolocales of $\So(L)$ containing all opens. We want to show a concrete example of the assignment $\mi{fit}[-]:\ca{CD}(\sll)\to \ca{C}(\So(L))$ not being injective in general.

\begin{lemma}\label{l: SbL and SeeL have the same fitting}
    For a frame $L$, $\mi{fit}[\Sb(L)]=\mi{fit}[\See(L)]$.
\end{lemma}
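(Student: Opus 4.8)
The plan is to transport the statement across the isomorphism $\varphi:\So(L)\cong \fse(L)$ of Lemma \ref{l: SoL and FSEL are isomorphic}, so that a claim about fittings of sublocales becomes one about their kernels, i.e.\ about exact versus strongly exact filters. Since $\mi{ker}=\varphi\circ \mi{fit}$ and $\varphi$ is a bijection, the desired identity $\mi{fit}[\Sb(L)]=\mi{fit}[\See(L)]$ holds if and only if $\mi{ker}[\Sb(L)]=\mi{ker}[\See(L)]$. So it suffices to establish this equality of collections of filters, and then apply $\varphi^{-1}$.

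First I would recall from Lemma \ref{l: FEL are the kernels of SbL} that $\mi{ker}[\Sb(L)]=\fe(L)$, the exact filters. The inclusion $\fe(L)\se \mi{ker}[\See(L)]$ is then immediate: by Lemma \ref{l: SbL included in SeeL} we have $\Sb(L)\se \See(L)$, so every exact filter, being $\mi{ker}(S)$ for some $S\in \Sb(L)$, is in particular the kernel of a sublocale in $\See(L)$. Equivalently, $\mi{ker}[\Sb(L)]\se \mi{ker}[\See(L)]$ by monotonicity of direct image.

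The reverse inclusion $\mi{ker}[\See(L)]\se \fe(L)$ is the crux, and I expect it to be the only real work. Here I would take an arbitrary exact sublocale $S\in \See(L)$ with surjection $s:L\to S$ and show that its kernel $\mi{ker}(S)=s^{-1}(1)$ is an exact filter. That $s^{-1}(1)$ is a filter is routine, following from $s$ being monotone and finite-meet-preserving. The point is closure under exact meets: if $\bwe_i x_i$ is an exact meet in $L$ with each $x_i\in s^{-1}(1)$, then, since $S$ is exact and hence $s$ preserves exact meets, $s(\bwe_i x_i)=\bwe_i s(x_i)=1$, so $\bwe_i x_i\in s^{-1}(1)$.

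Combining the two inclusions gives $\mi{ker}[\See(L)]=\fe(L)=\mi{ker}[\Sb(L)]$, and applying the bijection $\varphi^{-1}$ yields $\mi{fit}[\See(L)]=\mi{fit}[\Sb(L)]$. The step I would watch most carefully is invoking exactness of the surjection correctly: the definition guarantees precisely that exact meets are preserved, which is exactly what is needed, and no subtlety about arbitrary (non-exact) meets enters because all the relevant images are equal to $1$.
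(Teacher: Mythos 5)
Your proof is correct, and its overall skeleton coincides with the paper's: both reduce the statement to kernels via the isomorphism $\varphi$ of Lemma \ref{l: SoL and FSEL are isomorphic}, both get one inclusion for free from $\Sb(L)\se \See(L)$ (Lemma \ref{l: SbL included in SeeL}) together with $\mi{ker}[\Sb(L)]=\fe(L)$ (Lemma \ref{l: FEL are the kernels of SbL}), and both locate the real work in showing that the kernel of an exact sublocale is an exact filter. The difference lies in how that crux is justified. The paper invokes Lemma \ref{l: characterization of exact sublocales}, the geometric characterization of exactness: from $E\se \op(x_i)$, i.e.\ $E\cap \cl(x_i)\se \{1\}$, it concludes $E\cap \cl(\bwe_i x_i)\se \{1\}$, i.e.\ $\bwe_i x_i\in \mi{ker}(E)$. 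You instead argue straight from the definition of exact sublocale: the surjection $s:L\to S$ preserves exact meets, so if every $s(x_i)=1$ then $s(\bwe_i x_i)=\bwe_i s(x_i)=1$, the constant family of $1$'s trivially having meet $1$. Your route is the more elementary of the two --- it bypasses the characterization lemma (itself Proposition 7.15 of the cited paper) entirely, using only the identification $\mi{ker}(S)=s^{-1}(1)$ stated in the preliminaries --- while the paper's version keeps the argument entirely inside the lattice $\sll$, phrased in terms of open and closed sublocales rather than the surjection. Both are sound; your final caution about non-exact meets is well placed but, as you note, never becomes an issue.
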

\begin{proof}
   Since $\Sb(L)\se \See(L)$ by Lemma \ref{l: SbL included in SeeL}, $\mi{fit}[\Sb(L)]\se \mi{fit}[\See(L)]$. Let us show the reverse inclusion. By Lemma \ref{l: SoL and FSEL are isomorphic}, it suffices to show that $\mi{ker}[\See(L)]\se \mi{ker}[\Sb(L)]$, and since $\mi{ker}[\Sb(L)]=\fe(L)$, by Lemma \ref{l: FEL are the kernels of SbL}, it suffices to show that if $E\in \See(L)$ then $\mi{ker}(E)$ is an exact filter. Let $\bwe x_i\in L$ be an exact meet. If $E\se \op(x_i)$, then we have $E\cap \cl(x_i)\se \{1\}$. By the characterization of exact sublocales in Lemma \ref{l: characterization of exact sublocales}, this implies that $E\cap \cl(\bwe_i x_i)\se \{1\}$, that is, $E\se \op(\bwe_i x_i)$, and so, indeed, $\bwe_i x_i\in \mi{ker}(E)$.
\end{proof}

By Lemma \ref{l: SbL and SeeL have the same fitting}, above, then, to show that $\mi{fit}[-]$ is not injective, it suffices to find a frame $L$ where $\See(L)\nsubseteq \Sb(L)$. Complete sublocales, i.e. sublocales such that their surjection preserves all meets, in particular are exact (Proposition 7.14 in \cite{suarez25}). Then, complete sublocales which are not in $\Sb(L)$ are witnesses of $\See(L)\nsubseteq \Sb(L)$. Example 5.12 in \cite{bezhanishvili2025mckinseytarskialgebrasraneyextensions} presents a concrete example of this. The following class of examples of complete sublocales which are not in $\Sb(L)$ is due to Igor Arrieta, who we thank.

 \begin{example}\label{e: igor example}
For every frame $L$, there is a frame surjection $\varepsilon:\ca{D}(L)\to L$ defined as $\varepsilon(D)=\bve D$. We claim that if $L$ is completely distributive and is not $T_D$-spatial, then the sublocale $\varepsilon_*[L]\se \ca{D}(L)$ associated with this surjection is exact but not in $\Sb(L)$. An example of such a completely distributive lattice is given, for example, by the interval $[0,1]\se \mathbb{R}$, which has no covered primes. By complete distributivity of $L$, the surjection $\varepsilon:\ca{D}(L)\to L$ preserves all meets, and so it corresponds to an exact sublocale. Let us show that this sublocale is not in $\Sb(L)$. We note that $\ca{D}(L)$ is $T_D$-spatial: the covered primes are exactly the elements of the form $L{\sm}\up x$ for some $x\in L$, and these meet-generate $\ca{D}(L)$. Then, by Lemma \ref{l: smooth implies induced}, every sublocale in $\Sb(L)$ is $T_D$-spatial. As $L$ is not $T_D$-spatial, by assumption, the sublocale corresponding $\varepsilon_*[L]\se \ca{D}(L)$ cannot be in $\Sb(L)$. 
\end{example}

\subsection{From proper subcolocales of \texorpdfstring{$\So(L)$}{SoL} to subcolocales of \texorpdfstring{$\sll$}{SL}}

In this section, we restrict to a particular class of subcolocales of $\So(L)$ and define for their collection a left order adjoint to (the suitable corestriction of) the monotone map $\mi{fit}[-]:\ca{CD}(\sll)\to \ca{C}(\So(L))$. We say that a subcolocale $\kF\se \So(L)$ is \emph{proper} if it contains all open sublocales and the join $\bve_i^{\kF}\op(x_i)$ is exact for each family $x_i\in L$.

\begin{lemma}\label{l: from denseSL to properdenseSoL}
    If $\kD\se \sll$ is a codense subcolocale, then $\mi{fit}[\kD]\se \So(L)$ is proper.
\end{lemma}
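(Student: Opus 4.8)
The plan is to invoke Proposition \ref{p: from denseSL to denseSoL}, which already tells us that $\kF := \mi{fit}[\kD]$ is a subcolocale of $\So(L)$ containing every open sublocale; so the only thing left to verify is that for each family $(x_i)_i$ in $L$ the join $\bve^{\kF}_i \op(x_i)$ distributes over binary meets in the coframe $\kF$. I would first rewrite both sides of this distributivity law using the explicit descriptions of the operations in $\kF$. Joins in $\kF$ coincide with those in $\So(L)$ by Lemma \ref{l: subcolocales very basic}, so by Lemma \ref{l: difference in SoL} and \ref{opcl4} we get $\bve^{\kF}_i \op(x_i) = \mi{fit}(\bve_i \op(x_i)) = \op(\bve_i x_i)$; binary meets in $\kF$ are $F\wedge^{\kF}G = \nu_{\kF}(F\cap G) = \mi{fit}(\nu_{\kD}(F\cap G))$, using Lemma \ref{l: subcolocales very basic} (meets in $\So(L)$ being intersections) and Proposition \ref{p: from denseSL to denseSoL}(\ref{from dense SL item 2}). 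Fixing $G\in\kF$ and writing $K := \nu_{\kD}(G)$, the exactness claim reduces to the single identity
\[
\mi{fit}\big(K \cap \op(\bve_i x_i)\big) = \mi{fit}\big(\bve_i (K \cap \op(x_i))\big).
\]

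To reach this form I would apply Lemma \ref{o:wF}(1) to pull the open sublocales out of the conucleus, so that $\nu_{\kD}(G\cap\op(x)) = K\cap\op(x)$ appears on both sides, and then use that fitting absorbs inner fittings (the standard closure-operator fact $\mi{fit}(\bve_i \mi{fit}(B_i)) = \mi{fit}(\bve_i B_i)$) to collapse the right-hand side to the displayed form. The inclusion $\supseteq$ is immediate, since each $K\cap\op(x_i)\se K\cap\op(\bve_i x_i)$.

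The heart of the argument --- and the step I expect to be the main obstacle, precisely because in the coframe $\sll$ binary meets do \emph{not} distribute over joins --- is the reverse inclusion, which I would obtain not by equating the two sublocales (they need not be equal) but by showing they lie in exactly the same open sublocales. Since $\mi{fit}(S) = \bca\{\op(y)\mid S\se\op(y)\}$, it suffices to fix $y\in L$ and compare the conditions for containment in $\op(y)$. As $\op(x)$ is complemented with complement $\cl(x)$, the characterization of the coframe difference together with Lemma \ref{l: properties of difference}(\ref{properties of difference 1}) gives the adjunction $S\cap\op(x)\se\op(y) \iff S\se\op(y)\vee\cl(x)$. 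Applying it termwise yields
\[
\bve_i(K\cap\op(x_i))\se\op(y) \iff \forall i\,\big(K\se\op(y)\vee\cl(x_i)\big) \iff K\se\bca_i(\op(y)\vee\cl(x_i)),
\]
whereas applying the same adjunction to $\op(\bve_i x_i)$ gives $K\cap\op(\bve_i x_i)\se\op(y) \iff K\se\op(y)\vee\cl(\bve_i x_i)$. These two right-hand conditions coincide: by coframe distributivity (joins over meets) and $\bca_i\cl(x_i)=\cl(\bve_i x_i)$ we have $\bca_i(\op(y)\vee\cl(x_i)) = \op(y)\vee\bca_i\cl(x_i) = \op(y)\vee\cl(\bve_i x_i)$. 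Hence both sides of the displayed identity are contained in precisely the same open sublocales, so their fittings agree, and $\bve^{\kF}_i\op(x_i)$ is exact; this establishes that $\mi{fit}[\kD]$ is proper.
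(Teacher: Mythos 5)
Your proposal is correct and follows essentially the same route as the paper's own proof: after invoking Proposition \ref{p: from denseSL to denseSoL}, both arguments use Lemma \ref{o:wF} to replace the meet in $\kF$ by $\nu_{\kD}(G)\cap\op(-)$, then compare containment in open sublocales via the adjunction $S\cap\op(x)\se\op(y)\iff S\se\op(y)\ve\cl(x)$ and the coframe distributivity $\bca_i(\op(y)\ve\cl(x_i))=\op(y)\ve\cl(\bve_i x_i)$. The only cosmetic difference is that you package the argument as an identity of fittings of sublocales, while the paper chases the $\op(y)$-containments directly.
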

\begin{proof}
   By Proposition \ref{p: from denseSL to denseSoL}, $\mi{fit}[\kD]\se \So(L)$ is a subcolocale containing all opens. Let us show that for all $x_i\in L$ and $F\in \So(L)$:
    \[
    \op(\bve_i x_i)\wF F\se \bve_i^{\kF}\op(x_i)\wF F.
    \]
    Suppose that $\op(x_i)\wF F\se \op(y)$ for all $i\in I$. Then, by item \ref{from dense SL item 2} of Proposition \ref{p: from denseSL to denseSoL}, $\mi{fit}(\nu_{\kD}(\op(x_i)\cap F))\se \op(y)$, and by Lemma \ref{o:wF} this implies $\op(x_i)\cap \nu_{\kD}(F)\se \op(y)$. This also means $\nu_{\kD}(F)\se \op(y)\ve \cl(x_i)$ for all $i\in I$, that is, $\nu_{\kD}(F)\se \op(y)\ve \cl(\bve_i x_i)$. Thus, $\op(\bve_i x_i)\cap \nu_{\kD}(F)\se \op(y)$, and this implies $\mi{fit}(\nu_{\kD}(\op(\bve_i x_i)\cap F))\se \op(y)$, where we have used \ref{o:wF} again. As the left-hand side is $\op(\bve_i x_i)\wF F$, by item \ref{from dense SL item 3} of Proposition \ref{p: from denseSL to denseSoL}, our claim is proven.
\end{proof}
\begin{corollary}\label{c: concrete examples of proper}
    The following are all proper subcolocales.
    \begin{enumerate}
         \item $\So(L)\se \So(L)$ for every frame $L$;
        \item $\mi{fit}[\Sb(L)]\se \So(L)$ for every frame $L$;
        \item $\mi{fit}[\mf{S}_{sp}(L)]\se \So(L)$ for every spatial frame $L$.
    \end{enumerate}
\end{corollary}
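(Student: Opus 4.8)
The plan is to deduce all three items from Lemma \ref{l: from denseSL to properdenseSoL}, which guarantees that $\mi{fit}[\kD]\se\So(L)$ is proper whenever $\kD\se\sll$ is a codense subcolocale. Thus for each item it suffices to exhibit a codense subcolocale of $\sll$ whose fitting is the claimed subcolocale, and then invoke the lemma. The three cases differ only in which codense subcolocale is being fed into $\mi{fit}[-]$.

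For item (1) I would take $\kD=\sll$ itself. The whole coframe is trivially a subcolocale of itself, and it is codense since its top element is $L=\op(1)$, so $L\in\sll$. The image $\mi{fit}[\sll]$ is exactly $\So(L)$, because fitting is idempotent and its fixpoints are precisely the fitted sublocales. Applying Lemma \ref{l: from denseSL to properdenseSoL} with $\kD=\sll$ then shows that $\So(L)\se\So(L)$ is proper.

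For item (2), Lemma \ref{l: SbL is codense} already records that $\Sb(L)\se\sll$ is a codense subcolocale, so Lemma \ref{l: from denseSL to properdenseSoL} applies directly and $\mi{fit}[\Sb(L)]$ is proper. For item (3), Lemma \ref{l: SspL is codense iff L is spatial} tells us that $\mf{S}_{sp}(L)\se\sll$ is codense precisely when $L$ is spatial; under the spatiality hypothesis of the statement, this subcolocale is therefore codense, and Lemma \ref{l: from denseSL to properdenseSoL} again yields that $\mi{fit}[\mf{S}_{sp}(L)]$ is proper.

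I do not expect a genuine obstacle, as the corollary is essentially a harvesting of the subsection's main lemma together with the catalogue of codense subcolocales assembled in the preliminaries. The only point requiring a moment's care is the identification $\mi{fit}[\sll]=\So(L)$ needed in item (1); this follows from the fact that the corestriction of $\mi{fit}$ to its fixpoints surjects onto the fitted sublocales, so every element of $\So(L)$ arises as $\mi{fit}(S)$ for some $S\in\sll$.
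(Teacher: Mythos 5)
Your proof is correct and follows essentially the same route as the paper: both deduce all three items from the fact that the fitting image of a codense subcolocale of $\sll$ is proper, applied to $\sll$ itself (noting $\mi{fit}[\sll]=\So(L)$), to $\Sb(L)$ via Lemma \ref{l: SbL is codense}, and to $\mf{S}_{sp}(L)$ via Lemma \ref{l: SspL is codense iff L is spatial}.
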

\begin{proof}
We prove each item using \ref{p: from denseSL to denseSoL}. 
For the first, we just note $\So(L)=\mi{fit}[\mf{S}(L)]$. The inclusion $\mf{S}_b(L)\se \sll$ is codense by Lemma \ref{l: SbL is codense}. Finally, for a spatial frame $L$, the inclusion $\mf{S}_{sp}(L)\se \sll$ is codense by Lemma \ref{l: SspL is codense iff L is spatial}.
\end{proof}

For every subcolocale $\ca{F}\se \So(L)$ containing all opens, for all $F\in \ca{F}$ we define the relation $\leq^{\ca{F}}_{F}\se L\times L$ as:
  \[
     \leq^{\ca{F}}_F=\{(x,y)\in L\times L\mid F\wF\op(x)\se \op(y)\}.
\]
Should $\ca{F}\se \So(L)$ be clear from the context, we will sometimes abbreviate this as $\leq_F$.

\begin{proposition}\label{p:frmcong}
     A subcolocale $\mk{F}\se \So(L)$ containing all opens is proper if and only if for every $F\in \kF$ the relation $\leq^{\ca{F}}_F$ is a frame precongruence.
\end{proposition}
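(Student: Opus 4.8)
The plan is to handle the five defining conditions of a precongruence separately, showing that four of them hold for $\leq^{\ca{F}}_F$ for \emph{every} $F\in\kF$ with no hypothesis beyond $\kF$ being a subcolocale containing all opens, and that the remaining one --- closure under joins in the first coordinate --- is precisely equivalent to properness. Throughout I would use that $\kF$, being a subcolocale of the coframe $\So(L)$, is itself a coframe; that its binary meet is $G\wF H=\nu_{\kF}(G\cap H)$ by Lemma \ref{l: subcolocales very basic}, since meets in $\So(L)$ are intersections; and that $\bvf_i\op(x_i)=\op(\bve_i x_i)$, which holds because joins in $\kF$ agree with joins in $\So(L)$ (Lemma \ref{l: subcolocales very basic}), and by item \ref{opcl4} joins of opens are opens of joins, which are already fitted.

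For the four automatic axioms I would argue briefly. Reflexivity holds since $F\wF\op(x)\se\op(x)$. The down--up monotonicity axiom follows from $x'\le x\Rightarrow\op(x')\se\op(x)$ together with monotonicity of $\wF$ and $\op(y)\se\op(y')$ for $y\le y'$. Closure under meets in the second coordinate uses $\op(y_1)\cap\op(y_2)=\op(y_1\we y_2)$. Transitivity is the only one of the four needing a short computation: if $(x,y)\in\leq_F$ then $F\wF\op(x)$ is a member of $\kF$ contained in $F\cap\op(y)$, and since $F\wF\op(y)=\nu_{\kF}(F\cap\op(y))$ is the largest member of $\kF$ inside $F\cap\op(y)$, we get $F\wF\op(x)\se F\wF\op(y)$; combining with $(y,z)\in\leq_F$ yields $(x,z)\in\leq_F$.

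The genuine content is the equivalence between properness and the fourth axiom holding for all $F$. In the forward direction, properness gives exactness of $\bvf_i\op(x_i)$, that is $F\wF\bvf_i\op(x_i)=\bvf_i(F\wF\op(x_i))$ for every $F\in\kF$; hence if $(x_i,y)\in\leq_F$ for all $i$, then $F\wF\op(\bve_i x_i)=\bvf_i(F\wF\op(x_i))\se\op(y)$, giving $(\bve_i x_i,y)\in\leq_F$. In the reverse direction, assume every $\leq_G$ is a precongruence and fix $G\in\kF$ and a family $x_i$. Only the inequality $G\wF\bvf_i\op(x_i)\se\bvf_i(G\wF\op(x_i))$ requires proof, the reverse being automatic by monotonicity. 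Writing $E:=\bvf_i(G\wF\op(x_i))\in\kF\se\So(L)$, which is fitted, I would use $E=\bca\{\op(y)\mid E\se\op(y)\}$: for each such $y$ we have $G\wF\op(x_i)\se\op(y)$ for all $i$, i.e. $(x_i,y)\in\leq_G$, so the fourth precongruence axiom gives $(\bve_i x_i,y)\in\leq_G$, that is $G\wF\op(\bve_i x_i)\se\op(y)$; intersecting over all such $y$ yields $G\wF\bvf_i\op(x_i)\se E$, which is exactness.

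The main obstacle is this reverse implication, where the precongruence only provides information against \emph{open} upper bounds $\op(y)$, whereas exactness is a statement about the arbitrary member $E=\bvf_i(G\wF\op(x_i))$ of $\kF$. The bridging device is that every element of $\So(L)$ is fitted, hence recovered as the intersection of the open sublocales above it, which reduces the inequality to the open case where axiom 4 applies verbatim. I would also flag that \emph{exact} is read in the coframe $\kF$, where binary meet need not distribute over joins, so the inequality is genuinely nontrivial, and that exactness must hold for every $G\in\kF$ --- which is exactly what quantifying the precongruence condition over all $F\in\kF$ supplies.
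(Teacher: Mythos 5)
Your proof is correct and takes essentially the same approach as the paper's: the four structural axioms are routine, and the equivalence of properness with the join axiom rests in both cases on the fact that $\bvf_i(F\wF\op(x_i))$, being an element of $\So(L)$, is fitted and hence is the intersection of the open sublocales containing it. The only difference is presentational: you prove the converse direction directly, whereas the paper argues by contrapositive, extracting from a failure of exactness a single $\op(y)$ that witnesses the failure of stability under joins.
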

\begin{proof}
    The only condition which is not shown with routine computations is stability under arbitrary joins. If $x_i\leq_F y$ for $F\in \kF$, then $\bvf_i (F\wF\op(x_i))\se\op(y_i)$. The desired result follows by exactness of the join $\bvf_i\op(x_i)=\op(\bve_i x_i)$. For the converse, suppose that there is a subcolocale $\kF\se \So(L)$ containing all opens, which is not proper. Let $F\in \kF$ and $x_i\in L$ with $ F\wF\op(\bve_i x_i)\nsubseteq\bvf_i (F\wF\op(x_i))$. So, there is $y\in L$ with $F\wF\op(x_i)\se \op(y)$ for all $i\in I$ but $F\wF\op(\bve_i x_i)\nsubseteq \op(y)$. The first set inclusion means $x_i\leq_F y$ for each $i\in I$, and the second means $\bve_i x_i\nleq_F y$. Then, the relation $\leq_F$ is not stable under joins, and so it is not a frame precongruence.
\end{proof}

To our ends, the following characterization of proper subcolocales will be more useful. 

\begin{corollary}\label{c: charsigma}
    A subcolocale $\mk{F}\se \So(L)$ containing all opens is proper if and only if for every $F\in \mk{F}$ there is $\sigma(F)\in \sll$ such that 
\begin{align}
\sigma(F)\se \cl(x)\vee \op(y)\text{ if and only if }x\leq_F y,    
\end{align}

which is necessarily unique. Equivalently, $\sigma(F)$ is such that
\begin{align}
    \mi{fit}(\sigma(F)\cap \op(x))=F\wF\op(x)\text{ for each $x\in L$. } 
\end{align}

\end{corollary}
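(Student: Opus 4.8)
The plan is to route everything through the bijection between sublocales of $L$ and precongruences recalled in the preliminaries, using Proposition \ref{p:frmcong} to turn properness into the statement that each $\leq_F$ is a precongruence. For a sublocale $S\in\sll$ write $Q_S:=\{(x,y)\in L\times L\mid S\se \cl(x)\vee\op(y)\}$ and, for a precongruence $R$, write $S_R:=\bca\{\cl(x)\vee\op(y)\mid (x,y)\in R\}$ for the associated sublocale; the defining condition on $\sigma(F)$ is then precisely the equality $Q_{\sigma(F)}=\leq_F$. The first thing I would record is that $Q_S$ is \emph{always} a precongruence and that $S_{Q_S}=S$. The five axioms for $Q_S$ are routine: reflexivity is $\cl(x)\vee\op(x)=L$; the monotonicity axiom uses that $\cl$ reverses and $\op$ preserves order; stability under joins in the first coordinate follows from coframe distributivity together with $\bca_i\cl(x_i)=\cl(\bve_i x_i)$ (closed sublocales being up-sets), and under binary meets in the second coordinate from coframe distributivity and $\op(y_1)\cap\op(y_2)=\op(y_1\we y_2)$; and transitivity follows after rewriting $S\se\cl(x)\vee\op(y)$ as $S\cap\op(x)\se\op(y)$. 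The identity $S_{Q_S}=S$ is exactly the representation $S=\bca\{\op(a)\vee\cl(b)\mid S\se\op(a)\vee\cl(b)\}$ noted in the introduction.

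With this in hand, uniqueness is immediate: if $\sigma(F)$ satisfies the defining equivalence then $Q_{\sigma(F)}=\leq_F$, so $\sigma(F)=S_{Q_{\sigma(F)}}=S_{\leq_F}$ is forced, with no properness hypothesis needed. For the backward implication I would assume $\mk{F}$ proper, so that $\leq_F$ is a precongruence by Proposition \ref{p:frmcong}, and set $\sigma(F):=S_{\leq_F}$. The inclusion $\leq_F\se Q_{\sigma(F)}$ is clear, since each $\cl(x)\vee\op(y)$ with $x\leq_F y$ is one of the terms intersected to form $\sigma(F)$; for the reverse inclusion I note that $\leq_F$ and $Q_{\sigma(F)}$ are both precongruences with $S_{\leq_F}=\sigma(F)=S_{Q_{\sigma(F)}}$, so injectivity of $R\mapsto S_R$ forces $\leq_F=Q_{\sigma(F)}$, which is the defining equivalence. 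For the forward implication I would assume each $\sigma(F)$ exists; then $\leq_F=Q_{\sigma(F)}$ is a precongruence by the first paragraph, and Proposition \ref{p:frmcong} gives that $\mk{F}$ is proper.

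It then remains to show that the two displayed conditions describe the same $\sigma(F)$. Here I use that $\cl(x)$ is complemented with complement $\op(x)$, so by item \ref{properties of difference 1} of Lemma \ref{l: properties of difference} we have $\sigma(F)\cap\op(x)=\sigma(F){\sm}\cl(x)$, whence the characterizing property of the difference yields $\sigma(F)\cap\op(x)\se\op(y)$ iff $\sigma(F)\se\cl(x)\vee\op(y)$. Since $\mi{fit}$ is a closure operator sending $A$ to $\bca\{\op(z)\mid A\se\op(z)\}$, we also have $\mi{fit}(\sigma(F)\cap\op(x))\se\op(y)$ iff $\sigma(F)\cap\op(x)\se\op(y)$, while by definition $F\wF\op(x)\se\op(y)$ iff $x\leq_F y$. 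Thus the first displayed condition asserts exactly that the fitted sublocales $\mi{fit}(\sigma(F)\cap\op(x))$ and $F\wF\op(x)$ are contained in the same open sublocales for every $x$; as fitted sublocales are intersections of opens, and hence determined by the opens containing them, this is equivalent to the second displayed identity.

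The main obstacle is the bookkeeping of the first paragraph: verifying the precongruence axioms for $Q_S$ with the correct open/closed calculus (in particular matching the first-coordinate join axiom with $\bca_i\cl(x_i)=\cl(\bve_i x_i)$), and then recognizing that the equivalence between the defining condition and properness is cleanest not via a direct nucleus computation but by playing the two precongruences $\leq_F$ and $Q_{\sigma(F)}$ off against each other through the injectivity of $R\mapsto S_R$.
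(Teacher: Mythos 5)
Your proof is correct and takes essentially the same route as the paper's: the paper likewise obtains the first claim from Proposition \ref{p:frmcong} together with the precongruence--sublocale bijection, and proves the equivalence of the two displayed conditions by the same rewriting ($\sigma(F)\se \cl(x)\vee \op(y)$ iff $\sigma(F)\cap \op(x)\se \op(y)$, plus the fact that fitted sublocales are determined by the opens containing them). Your write-up merely fills in details the paper delegates to the cited correspondence, namely the direct verification that $Q_S$ is a precongruence with $S_{Q_S}=S$ and the injectivity argument forcing $\leq_F=Q_{\sigma(F)}$.
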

\begin{proof}
The first claim follows from \ref{p:frmcong}, and by the correspondence between precongruences and sublocales. To see that the second condition is equivalent to the first, we note that the first condition amounts to having $\sigma(F)\cap \op(x)\se \op(y)$ if and only if $x\leq_F y$. In turn, this is equivalent to having $\sigma(F)\cap \op(x)\se \op(y)$ if and only if $F\wF \op(x)\se \op(y)$.
\end{proof}
For each proper subcolocale $\kF\se \So(L)$ we may then define a map 
\begin{align*}
   \sigma_{\kF}: &\kF\to \sll\\
    & F \mapsto \bca\{\op(x)\ve \cl(y)\mid x,y\in L, x\leq_F y\}.
\end{align*}

When $\kF$ is clear from the context, we simply call this map $\sigma$. 

Let us see a few basic facts about this map.

\begin{lemma}\label{l: sigma basic}
Let $\kF\se \So(L)$ be a proper subcolocale. Then, for each $F\in \kF$ and $x\in L$:
\begin{enumerate}
    \item \label{sigma basic 1}$\mi{fit}(\sigma(F))=F$;
     \item \label{sigma basic 2} $\sigma(\op(x))=\op(x)$;
     \item \label{sigma basic 3}$\sigma (F\wF \op(x))=\sigma(F)\cap \op(x)$.
\end{enumerate}
\end{lemma}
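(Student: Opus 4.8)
The plan is to reduce all three items to the two characterizations of $\sigma$ recorded in Corollary \ref{c: charsigma}: the containment condition $\sigma(F)\se \cl(a)\ve\op(b) \iff a\leq_F b$ together with the uniqueness of the sublocale satisfying it, and the fitting condition $\mi{fit}(\sigma(F)\cap\op(z))=F\wF\op(z)$. Throughout I would lean on the elementary identity $\op(x)\wF\op(a)=\op(x\we a)$: since meets in $\kF$ are $\nu_{\kF}$ applied to meets (i.e. intersections) in $\So(L)$, and $\op(x)\cap\op(a)=\op(x\we a)$ is itself open and hence a fixpoint of the conucleus, the two coincide. I will also repeatedly use that $\op(x),\cl(x)$ are complemented and therefore linear (Lemma \ref{l: linear}), together with $\op(x)\cap\cl(x)=\{1\}$ and $\op(x)\ve\cl(x)=L$ (item \ref{opcl3}) and $\cl(x)\ve\cl(a)=\cl(x\we a)$ (item \ref{opcl5}).

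For item \ref{sigma basic 1} I would simply instantiate the fitting condition at $z=1$. Since $\op(1)=L$ is the top sublocale, we have $\sigma(F)\cap\op(1)=\sigma(F)$ and $F\wF\op(1)=F$, so the fitting condition reads $\mi{fit}(\sigma(F))=F$ directly. For item \ref{sigma basic 2} the strategy is to verify that $\op(x)$ itself satisfies the containment characterization of $\sigma(\op(x))$ and conclude by uniqueness. On one side, $a\leq_{\op(x)} b$ unfolds, via $\op(x)\wF\op(a)=\op(x\we a)$, to $x\we a\le b$. On the other side, decomposing $\op(x)=(\op(x)\cap\op(a))\ve(\op(x)\cap\cl(a))$ by linearity of $\op(a)$ shows that $\op(x)\se\cl(a)\ve\op(b)$ holds exactly when $\op(x\we a)\se\op(b)$, i.e. when $x\we a\le b$. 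The two conditions agree, so $\op(x)=\sigma(\op(x))$.

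Item \ref{sigma basic 3} is the main step. Writing $G:=F\wF\op(x)\in\kF$, I would once more show that $\sigma(F)\cap\op(x)$ satisfies the containment characterization of $\sigma(G)$ and finish by uniqueness. First, $a\leq_G b$ reduces, by associativity of $\wF$ and $\op(x)\wF\op(a)=\op(x\we a)$, to $F\wF\op(x\we a)\se\op(b)$, i.e. to $x\we a\leq_F b$. It then remains to establish
\[
\sigma(F)\cap\op(x)\se \cl(a)\ve\op(b) \quad\Longleftrightarrow\quad x\we a\leq_F b ,
\]
which is where the content lies. For $\Rightarrow$, I would restore the missing closed factor by linearity: from $\sigma(F)=(\sigma(F)\cap\op(x))\ve(\sigma(F)\cap\cl(x))$ together with the hypothesis and $\cl(x)\ve\cl(a)=\cl(x\we a)$, one gets $\sigma(F)\se\cl(x\we a)\ve\op(b)$, which by the characterization of $\sigma(F)$ is exactly $x\we a\leq_F b$. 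For $\Leftarrow$, the characterization gives $\sigma(F)\se\cl(x)\ve\cl(a)\ve\op(b)$; intersecting with $\op(x)$ and expanding by linearity annihilates the $\cl(x)$ summand (as $\op(x)\cap\cl(x)=\{1\}$), leaving $\sigma(F)\cap\op(x)\se\cl(a)\ve\op(b)$.

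The hard part is precisely this equivalence in item \ref{sigma basic 3}: one must move between a containment of $\sigma(F)\cap\op(x)$ and a containment of $\sigma(F)$ itself, and the clean device for doing so is the complemented/linear decomposition of $\sigma(F)$ along $\op(x)\ve\cl(x)=L$, which lets the factor $\cl(x)$ be split off from or absorbed into the meet $x\we a$ in the first coordinate. Everything else — the unfolding of $\leq_F$ through $\op(x)\wF\op(a)=\op(x\we a)$, and the coframe identities for open and closed sublocales — is routine.
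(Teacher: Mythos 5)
Your proof is correct, and it rests on the same key result as the paper's --- Corollary \ref{c: charsigma} together with the uniqueness of $\sigma$ --- but it uses the other half of that corollary, which changes the shape of the argument. Your item \ref{sigma basic 1} is exactly the paper's proof (instantiate the fitting identity at $\op(1)$). For items \ref{sigma basic 2} and \ref{sigma basic 3}, however, the paper verifies the \emph{fitting} characterization, i.e.\ it shows $\mi{fit}(S\cap \op(y))=G\wF\op(y)$ for all $y\in L$ for the candidate sublocale $S$; this makes item \ref{sigma basic 3} a short equational computation,
\[
\mi{fit}(\sigma(F)\cap \op(x)\cap \op(y))=\mi{fit}(\sigma(F)\cap \op(x\we y))=F\wF\op(x\we y)=F\wF\op(x)\wF\op(y),
\]
needing only $\op(x)\cap\op(y)=\op(x\we y)$ and the fact that $\kF$ contains all opens. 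You instead verify the \emph{containment} characterization $S\se \cl(a)\ve\op(b)\iff a\leq_G b$, which forces you to pass between containments of $\sigma(F)\cap\op(x)$ and containments of $\sigma(F)$ itself; your device for this --- the decomposition $\sigma(F)=(\sigma(F)\cap\op(x))\ve(\sigma(F)\cap\cl(x))$ along $\op(x)\ve\cl(x)=L$, absorbing or splitting off $\cl(x)$ via $\cl(x)\ve\cl(a)=\cl(x\we a)$ --- is sound and makes the sublocale-level geometry explicit, at the cost of a longer argument. One small attribution point: that decomposition is an instance of \emph{binary distributivity} in the coframe $\sll$ (equivalently, for the steps where $\op(x)$ is the factor being intersected against a join, linearity of the complemented element $\op(x)$); it is not justified by linearity of $\op(a)$, nor by linearity of $\sigma(F)$, which is not known to be linear. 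Since every coframe is a distributive lattice, the step stands, but the citation should be adjusted.
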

\begin{proof}
Suppose that $\kF\se \So(L)$ is a proper codense subcolocale.
\begin{enumerate}

    \item By Corollary \ref{c: charsigma}, $\mi{fit}(\sigma(F))=\mi{fit}(\sigma(F)\cap \op(1))=F\wF\op(1)=F$. 

     \item We use Corollary \ref{c: charsigma}. We have to check $\mi{fit}(\op(x)\cap \op(y))=\op(x)\wF \op(y)$ for all $y\in L$. Indeed, $\mi{fit}(\op(x)\cap \op(y))=\op(x)\cap \op(y)=\op(x\we y)$, and this equals $\op(x)\wF\op(y)$ as $\kF$ contains all open sublocales.

        \item We use Corollary \ref{c: charsigma} again. We have to show that 
    \setcounter{equation}{0}
    \begin{equation}\label{sigma equation 1}
         \mi{fit}(\sigma(F)\cap \op(x)\cap \op(y))=F\wF\op(x)\wF\op(y)
    \end{equation}
    for all $y\in L$. By the characterization of $\sigma(F)$ from \ref{c: charsigma}. 
    \begin{equation}\label{sigma equation 2}
         \mi{fit}(\sigma(F)\cap \op(x)\cap \op(y))=\mi{fit}(\sigma(F)\cap \op(x\we y))=F\wF \op(x \we y),
    \end{equation}
   As $\kF$ contains all open sublocales,
    \begin{equation}\label{sigma equation 3}
         F\wF \op(x\we y)=F\wF \op(x)\cap \op(y)=F\wF\op(x)\wF\op(y).
    \end{equation}   
    By combining \ref{sigma equation 2} and \ref{sigma equation 3}, we obtain \ref{sigma equation 1} as desired.\qedhere
    
\end{enumerate} 
\end{proof}

We call $\Delta(\kF)$ the subcolocale $\ca{S}(\sigma[\kF])$. For a complete lattice $C$ and a collection $X\se C$ we call $\ca{J}(X)$ the closure of $X$ under arbitrary joins. If $C$ is a coframe, we call $\ca{S}(X)$ the smallest subcolocale containing $X$.
\begin{lemma}\label{l: general formula for mathcalS}
    Let $\mk{X}\se \sll$ be any subset. Then $\ca{S}(\mk{X})$ is
    \[
    \ca{J}(\{X\cap \op(a)\cap \cl(b)\mid a,b\in L,X\in \mk{X}\}).
    \]
\end{lemma}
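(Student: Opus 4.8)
The plan is to prove both inclusions between the claimed set and $\ca{S}(\mk{X})$. Let me denote the candidate formula by
\[
W=\ca{J}(\{X\cap \op(a)\cap \cl(b)\mid a,b\in L,\ X\in \mk{X}\}).
\]

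First I would show $W\se \ca{S}(\mk{X})$. Since $\ca{S}(\mk{X})$ is by definition a subcolocale containing $\mk{X}$, it is closed under arbitrary joins and under the difference $d{\sm}c$ for $c\in \sll$. For any $X\in \mk{X}$ the basic generator $X\cap \op(a)\cap \cl(b)$ can be rewritten using item \ref{properties of difference 1} of Lemma \ref{l: properties of difference}: since $\op(a)$ and $\cl(b)$ are complemented in $\sll$ (by item \ref{opcl3} of the open/closed lemma), we have $X\cap \op(a)=X{\sm}\cl(a)$ and then $(X{\sm}\cl(a))\cap \cl(b)=(X{\sm}\cl(a)){\sm}\op(b)$. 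Hence every generator lies in $\ca{S}(\mk{X})$, and as the latter is closed under joins, all of $W$ is contained in it.

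Next I would show $\ca{S}(\mk{X})\se W$, and the cleanest route is to verify that $W$ is itself a subcolocale containing $\mk{X}$; since $\ca{S}(\mk{X})$ is the \emph{smallest} such, this gives the reverse inclusion. Containment of $\mk{X}$ is immediate by taking $a=b=0$ in the generator, giving $X\cap \op(0)\cap \cl(0)=X\cap \{1\}\cap L=X$ by items \ref{opcl1} and \ref{opcl2}. Closure under joins is built into $\ca{J}(-)$. The substantive point is closure under $-{\sm}c$ for arbitrary $c\in \sll$, which I would establish via the characterization in Proposition \ref{p: subcolocales of SL characterization}: it suffices to show $W$ is stable under $-\cap \op(x)$ and $-\cap \cl(x)$ for all $x\in L$. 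By item \ref{properties of difference 2} of Lemma \ref{l: properties of difference} (distributivity of difference, equivalently of these meets, over joins — using that $\op(x),\cl(x)$ are linear by Lemma \ref{l: linear}), it is enough to check stability on a single generator $X\cap \op(a)\cap \cl(b)$. There, using $\op(a)\cap \op(x)=\op(a\we x)$ (item \ref{opcl4}) and $\cl(b)\cap \cl(x)=\cl(b)\ve^{?}$—more carefully, $(X\cap \op(a)\cap \cl(b))\cap \op(x)=X\cap \op(a\we x)\cap \cl(b)$, again a generator, and $(X\cap \op(a)\cap \cl(b))\cap \cl(x)=X\cap \op(a)\cap (\cl(b)\cap \cl(x))=X\cap \op(a)\cap \cl(b\we x)$ by item \ref{opcl5}, once more a generator.

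The main obstacle I anticipate is the interchange of the $\cap \op(x)$ and $\cap \cl(x)$ operations with the arbitrary join defining $\ca{J}$: one must justify that $(\bve_i G_i)\cap \op(x)=\bve_i(G_i\cap \op(x))$ and similarly for $\cl(x)$, where the $G_i$ are generators. This is exactly the linearity of the complemented sublocales $\op(x),\cl(x)$ furnished by Lemma \ref{l: linear}, so the distributivity is available; the care needed is only in confirming that the join in $\sll$ appearing in $\ca{J}$ is the one over which linearity distributes, but since meets in $\sll$ are intersections and the joins are the coframe joins, this matches the hypothesis of linearity directly. Once this distributivity is in hand, the reduction to single generators makes both verifications routine, and the two inclusions together yield the claimed formula.
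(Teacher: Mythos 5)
Your proof takes the same route as the paper's: show that the join-closure $W$ of the generators is contained in every subcolocale containing $\mk{X}$ (the paper cites Proposition \ref{p: subcolocales of SL characterization}; you rewrite the generators as iterated differences via item \ref{properties of difference 1} of Lemma \ref{l: properties of difference}, which is the same computation), and then show $W$ is itself a subcolocale containing $\mk{X}$ by combining closure under joins with stability under $-\cap \op(x)$ and $-\cap \cl(x)$, reducing to single generators by linearity of complemented sublocales. That structure, and in particular the linearity step you single out as the main obstacle, is exactly the paper's argument.

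There is, however, one concrete false step: to get $\mk{X}\se W$ you take $a=b=0$ and assert $X\cap \op(0)\cap \cl(0)=X\cap\{1\}\cap L=X$. By item \ref{opcl1} we have $\op(0)=\{1\}$, and since every sublocale contains $1$, this intersection is $\{1\}$, not $X$. The correct instantiation is $a=1$, $b=0$: then $X\cap \op(1)\cap \cl(0)=X\cap L\cap L=X$ by items \ref{opcl1} and \ref{opcl2}. With that one-line repair the proof is complete. (A side remark: in your stability check you write $\cl(b)\cap \cl(x)=\cl(b\we x)$, following the paper's statement of item \ref{opcl5}; since $\cl(a)={\uparrow}a$, the intersection is in fact $\cl(b\ve x)$, but this is immaterial to your argument, as either way the result is a closed sublocale and the set in question is again a generator.)
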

\begin{proof}
 By Proposition \ref{p: subcolocales of SL characterization}, any subcolocale $\mk{S}\se \sll$ containing $\mk{X}$ must also contain the collection in the claim. To show the desired result, then, it suffices to show that this is a subcolocale. We use the characterization in \ref{p: subcolocales of SL characterization}. Closure under joins is clear, and stability under $-\cap \op(x)$ and $-\cap \cl(x)$ follows from linearity of open and closed sublocales.
\end{proof}

\begin{lemma}\label{l: delta concretely}
    Let $\kF\se \So(L)$ be a proper subcolocale. Then
    \[
   \Delta(\kF)=\{\bve_i \sigma(F_i)\cap \cl(x_i):F_i\in \kF,x_i\in L\}.
    \]
\end{lemma}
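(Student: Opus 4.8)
The plan is to compute $\Delta(\kF)=\ca{S}(\sigma[\kF])$ directly from the general formula in Lemma \ref{l: general formula for mathcalS}, and then simplify the resulting generators using the basic identities for $\sigma$ from Lemma \ref{l: sigma basic}. Applying Lemma \ref{l: general formula for mathcalS} with $\mk{X}=\sigma[\kF]$ gives
\[
\Delta(\kF)=\ca{J}(\{\sigma(F)\cap \op(a)\cap \cl(b)\mid a,b\in L,\ F\in \kF\}),
\]
so it suffices to show that the join-closure of these generators coincides with the set in the statement. The only discrepancy between the two is the presence of the extra open factor $\op(a)$, so the whole argument reduces to absorbing this factor into $\sigma$.

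The key step I would carry out is exactly that absorption. Since $\kF$ contains all open sublocales we have $\op(a)\in \kF$, and since $\kF$ is a subcolocale it is closed under its own meets, so $F\wF\op(a)$ again lies in $\kF$. By item \ref{sigma basic 3} of Lemma \ref{l: sigma basic}, $\sigma(F)\cap \op(a)=\sigma(F\wF\op(a))$, and hence each generator rewrites as
\[
\sigma(F)\cap \op(a)\cap \cl(b)=\sigma(F')\cap \cl(b),\qquad F':=F\wF\op(a)\in \kF.
\]
Conversely, every element of the form $\sigma(F)\cap \cl(b)$ is itself a generator, namely the one with $a=1$, because $\op(1)=L$. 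Thus the family $\{\sigma(F)\cap \op(a)\cap \cl(b)\}$ and the family $\{\sigma(F)\cap \cl(b)\}$ have the same join-closure.

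It then remains only to observe that the join-closure of $\{\sigma(F)\cap \cl(b)\mid F\in \kF,\ b\in L\}$ is precisely $\{\bve_i \sigma(F_i)\cap \cl(x_i):F_i\in \kF,\ x_i\in L\}$: each such join clearly lies in the closure, and this family is already closed under arbitrary joins since a join of joins is again a join of the same shape. Combining these observations gives the claimed description of $\Delta(\kF)$. I do not expect any serious obstacle, as the computation is short; the single point that requires care is the verification that $F\wF\op(a)\in \kF$, which relies on $\kF$ containing all opens together with closure under the meet $\wF$, and the correct invocation of Lemma \ref{l: sigma basic}(\ref{sigma basic 3}) to move the open factor inside $\sigma$.
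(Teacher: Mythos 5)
Your proposal is correct and follows essentially the same route as the paper: both apply Lemma \ref{l: general formula for mathcalS} to $\sigma[\kF]$ and then absorb the open factor $\op(a)$ into $\sigma$ via item \ref{sigma basic 3} of Lemma \ref{l: sigma basic}, so that the generators reduce to elements of the form $\sigma(F')\cap \cl(b)$ with $F'\in\kF$. The paper's proof is just a more terse statement of exactly this argument.
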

\begin{proof}
By item \ref{sigma basic 3} of Lemma \ref{l: sigma basic}, $\sigma(F)\cap \op(x)\in \sigma[\kF]$ for every $F\in \kF$ and $x\in L$. Then, by Lemma \ref{l: general formula for mathcalS}, $\ca{S}(\sigma[\kF])$ is as desired.
\end{proof}

\begin{proposition}\label{l: proper is fixpoint}
   If $\kF\se \So(L)$ is a proper subcolocale then $\mi{fit}[\Delta(\kF)]=\kF$.
\end{proposition}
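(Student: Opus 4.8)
The plan is to prove the two inclusions separately. The inclusion $\kF\se\mi{fit}[\Delta(\kF)]$ is immediate, while the reverse inclusion is where the description of $\Delta(\kF)$ from Lemma \ref{l: delta concretely} and the properties of $\sigma$ recorded in Lemma \ref{l: sigma basic} do the work. Note that all the content specific to properness is already packaged into the existence and behaviour of $\sigma$, so the argument below only manipulates the lemmas proved so far.

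For $\kF\se\mi{fit}[\Delta(\kF)]$, I would simply observe that for any $F\in\kF$ the sublocale $\sigma(F)$ belongs to $\sigma[\kF]\se\ca{S}(\sigma[\kF])=\Delta(\kF)$, and that by item \ref{sigma basic 1} of Lemma \ref{l: sigma basic} we have $\mi{fit}(\sigma(F))=F$. Hence $F\in\mi{fit}[\Delta(\kF)]$.

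For the reverse inclusion, I would take an arbitrary element of $\Delta(\kF)$, which by Lemma \ref{l: delta concretely} has the form $S=\bve_i\left(\sigma(F_i)\cap\cl(x_i)\right)$ with $F_i\in\kF$ and $x_i\in L$, and compute $\mi{fit}(S)$. Since fitting is a closure operator on $\sll$, its corestriction to fixpoints preserves all joins, so $\mi{fit}(S)=\bve^{\mi{fit}}_i\mi{fit}\left(\sigma(F_i)\cap\cl(x_i)\right)$. The key simplification is to rewrite each summand: by Lemma \ref{l: intersection and fitting} together with item \ref{sigma basic 1} of Lemma \ref{l: sigma basic} we get $\mi{fit}(\sigma(F_i)\cap\cl(x_i))=\mi{fit}(\mi{fit}(\sigma(F_i))\cap\cl(x_i))=\mi{fit}(F_i\cap\cl(x_i))$, and Lemma \ref{l: fit of F cap cx} then identifies this with the difference $F_i\sm^{\mi{fit}}\op(x_i)$ taken in $\So(L)$. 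Thus $\mi{fit}(S)=\bve^{\mi{fit}}_i\left(F_i\sm^{\mi{fit}}\op(x_i)\right)$. To conclude, I would invoke that $\kF$ is a subcolocale of $\So(L)$: since each $\op(x_i)\in\So(L)$ and $F_i\in\kF$, closure under differences by elements of the ambient coframe gives $F_i\sm^{\mi{fit}}\op(x_i)\in\kF$, and closure under arbitrary joins then places the whole join $\mi{fit}(S)$ in $\kF$.

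I expect the only delicate points to be the join-preservation step for the corestricted fitting map and the bookkeeping of whether a given meet, join, or difference is being computed in $\sll$ or in $\So(L)$; the substantive topological content is entirely absorbed into Lemmas \ref{l: intersection and fitting}, \ref{l: fit of F cap cx}, and \ref{l: sigma basic}, so no further use of properness is required beyond what guarantees that $\sigma$ is well defined.
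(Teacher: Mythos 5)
Your proof is correct and follows essentially the same route as the paper: the forward inclusion via item \ref{sigma basic 1} of Lemma \ref{l: sigma basic}, and the reverse inclusion by reducing, through join-preservation of the corestricted fitting, to the generators $\sigma(F_i)\cap\cl(x_i)$ and then applying Lemma \ref{l: intersection and fitting}. The only cosmetic difference is that where the paper concludes by citing Proposition \ref{p: subcolocales of SoL characterization} to place $\mi{fit}(F_i\cap\cl(x_i))$ in $\kF$, you unfold that proposition's argument inline (Lemma \ref{l: fit of F cap cx} plus closure of $\kF$ under differences and joins), which is equivalent.
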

\begin{proof}
   The inclusion $\kF\se \mi{fit}[\Delta(\kF)]$ holds by item \ref{sigma basic 1} of Lemma \ref{l: sigma basic}. Let us show the reverse inclusion. We notice that the map $\mi{fit}:\Delta(\kF)\to \So(L)$ preserves all joins, as joins in $\Delta(\kF)$ are computed as in $\sll$. Then, it suffices to prove the claim for basic elements $\sigma(G)\cap \cl(x)$. We now claim that $\mi{fit}(\sigma (G)\cap \cl(x))\in \kF$. Note that, by Lemma \ref{l: intersection and fitting}, 
   \[
   \mi{fit}(\sigma (G)\cap \cl(x))=\mi{fit}(\mi{fit}(\sigma(G))\cap \cl(x)), 
   \]
   and $\mi{fit}(\sigma(G))=G$ by item \ref{sigma basic 1} of Lemma \ref{l: sigma basic}. We have then shown 
   \[
   \mi{fit}(\sigma (G)\cap \cl(x))=\mi{fit} (G\cap \cl(x)).
   \]
   By Proposition \ref{p: subcolocales of SoL characterization}, this is in $\kF$.
\end{proof}
\begin{corollary}
    A subcolocale $\kF\se \So(L)$ is proper if and only if it is of the form $\mi{fit}[\kD]$ for some codense subcolocale $\kD\se \sll$.
\end{corollary}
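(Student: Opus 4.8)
The plan is to obtain both implications almost immediately from the two results just proved, so the only genuine work is a single codensity check. For the backward implication, suppose $\kF = \mi{fit}[\kD]$ for some codense subcolocale $\kD \se \sll$. Then $\kF$ is proper directly by Lemma \ref{l: from denseSL to properdenseSoL}, and nothing further is needed.

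For the forward implication, suppose $\kF \se \So(L)$ is proper. The obvious candidate for a witnessing codense subcolocale is $\Delta(\kF) = \ca{S}(\sigma[\kF])$, which is a subcolocale of $\sll$ by construction. Proposition \ref{l: proper is fixpoint} already tells us that $\mi{fit}[\Delta(\kF)] = \kF$, so $\kF$ is of the claimed form \emph{provided} $\Delta(\kF)$ is codense. Thus the entire forward direction reduces to verifying that $L \in \Delta(\kF)$, i.e. that $\Delta(\kF)$ contains $\op(1)$.

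This is exactly the point where the hypothesis that $\kF$ contains all open sublocales is used. Since $\op(1) = L \in \kF$, item \ref{sigma basic 2} of Lemma \ref{l: sigma basic} gives $\sigma(\op(1)) = \op(1) = L$, so that $L \in \sigma[\kF] \se \Delta(\kF)$. Hence $\Delta(\kF)$ is codense, and taking $\kD = \Delta(\kF)$ exhibits $\kF$ as $\mi{fit}[\kD]$ for a codense subcolocale, completing this direction.

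There is essentially no obstacle here: the corollary is a clean repackaging of Lemma \ref{l: from denseSL to properdenseSoL} (the backward implication) and Proposition \ref{l: proper is fixpoint} (the fixpoint equation for the forward implication), glued together by the one-line observation that $\sigma$ fixes $\op(1) = L$. The only thing one must be careful to state explicitly is why $\Delta(\kF)$ is codense rather than merely a subcolocale, and that follows at once from $\sigma$ fixing open sublocales.
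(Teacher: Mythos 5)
Your proof is correct and takes essentially the same route as the paper: the backward implication is exactly Lemma \ref{l: from denseSL to properdenseSoL}, and the forward implication is the fixpoint equation $\mi{fit}[\Delta(\kF)]=\kF$ of Proposition \ref{l: proper is fixpoint} applied with $\kD=\Delta(\kF)$. The one point where you go beyond the paper's own proof is the explicit verification that $\Delta(\kF)$ is codense (via $\sigma(\op(1))=\op(1)=L\in\sigma[\kF]\se\Delta(\kF)$), a detail the paper leaves implicit but which is genuinely needed for the forward direction, so this is a small improvement rather than a deviation.
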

\begin{proof}
If $\kF$ is proper then $\mi{fit}[\Delta(\kF)]=\kF$ by Proposition \ref{l: proper is fixpoint}. If $\kD\se \sll$ is codense, $\mi{fit}[\kD]$ is proper by Lemma \ref{l: from denseSL to properdenseSoL}. 
\end{proof}

\begin{remark}
Note that one could define $\sigma(F)=\bigcap\{\cl(x)\vee \op(y)\mid x\leq_F y\}$ even when $\kF$ is not proper, but in that case $\sigma(F)\se \cl(x)\vee \op(y)$ does not necessarily imply $x\leq_F y$. Item \ref{sigma basic 1} of Lemma \ref{l: sigma basic}, stating $\mi{fit}(\sigma(F))=F$ for $F\in \kF$, relies on this direction of the implication. This is why one cannot extend the definition of the adjoint $\Delta$ to non-proper subcolocales using this more general definition of $\sigma$. If $F$ is not proper, we cannot prove in a similar way that $\mi{fit}(\sigma(F))=F$, and so our proof of the $\kF\se \mi{fit}[\Delta(\kF)]$ half of the adjointness condition does not go through.   
\end{remark}

\subsection{Essential subcolocales of \texorpdfstring{$\sll$}{SL}}

We look at what subcolocales of $\mf{S}(L)$ are $\Delta(\kF)$ for some proper $\kF\se \So(L)$. For a codense subcolocale $\kD\se \sll$ we call an element \emph{saturated} if it is of the form $\bwe^{\kD}_i \op(x_i)=\nu_{\kD}(\bca_i \op(x_i))$ for some family $x_i\in L$. We call $\mf{Sat}(\kD)$ their ordered collection, and note that $\mf{Sat}(\kD)\se \kD$ is a subcoframe inclusion. Let us define a codense subcolocale $\kD\se \sll$ to be \emph{essential} if it is $\ca{S}(\mf{Sat}(\kD))$. 

\begin{lemma}\label{l: smallest containing the sat}
  For a codense subcolocale $\kD\se \sll$
   \[
   \ca{S}(\mf{Sat}(\kD))=\ca{J}(\{F\cap \cl(z)\mid F\in \mf{Sat}(\kD),z\in L\}).
   \]
\end{lemma}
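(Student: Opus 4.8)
The plan is to read off the claimed description directly from the general formula for $\ca{S}(-)$ established in Lemma \ref{l: general formula for mathcalS}. Applying that lemma to the subset $\mf{Sat}(\kD)\se \sll$ gives
\[
\ca{S}(\mf{Sat}(\kD))=\ca{J}(\{F\cap \op(a)\cap \cl(b)\mid a,b\in L,\ F\in \mf{Sat}(\kD)\}),
\]
so the whole task reduces to showing that this generating family and the family $\{F\cap \cl(z)\mid F\in \mf{Sat}(\kD),\ z\in L\}$ have the same closure under joins. I would establish this by proving that each of the two generating families is contained in the other.

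The key step is the observation that $\mf{Sat}(\kD)$ is stable under intersection with open sublocales. Indeed, if $F=\nu_{\kD}(\bca_i \op(x_i))$ is saturated, then for any $a\in L$ item (1) of Lemma \ref{o:wF} gives $F\cap \op(a)=\nu_{\kD}(\bca_i \op(x_i))\cap \op(a)=\nu_{\kD}\big((\bca_i \op(x_i))\cap \op(a)\big)$. Since intersection with $\op(a)$ commutes with arbitrary intersections and $\op(x_i)\cap \op(a)=\op(x_i\we a)$, the right-hand side equals $\nu_{\kD}(\bca_i \op(x_i\we a))$, which is again of saturated form. Hence $F\cap \op(a)\in \mf{Sat}(\kD)$ whenever $F\in \mf{Sat}(\kD)$.

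With this stability in hand the two inclusions are routine. Taking $a=1$ and using $\op(1)=L$ shows that every generator $F\cap \cl(z)$ of the second family equals $F\cap \op(1)\cap \cl(z)$ and so lies in the first. Conversely, a generator $F\cap \op(a)\cap \cl(b)$ of the first family can be rewritten as $F'\cap \cl(b)$ with $F'=F\cap \op(a)\in \mf{Sat}(\kD)$ by the stability just established, and so lies in the second. Since the two families are mutually contained, they generate the same join-closure, which yields the stated equality. I do not anticipate a serious obstacle here: the only point that genuinely uses the hypotheses is the saturation-stability argument, which relies on Lemma \ref{o:wF} and hence on the codensity of $\kD$ (needed so that all $\op(a)$ belong to $\kD$).
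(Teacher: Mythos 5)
Your proposal is correct and takes essentially the same route as the paper: the paper's entire proof is the one-line application of Lemma \ref{l: general formula for mathcalS} to $\mf{Sat}(\kD)$, leaving implicit exactly the point you spell out, namely that $\mf{Sat}(\kD)$ is stable under $-\cap\op(a)$ (via Lemma \ref{o:wF}), so the open generator can be absorbed into the saturated element. Your write-up simply makes that absorption step explicit, which is a welcome addition rather than a deviation.
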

\begin{proof}
    This follows from the characterization in \ref{l: general formula for mathcalS}.
\end{proof}
We recall that in a category an \emph{essential extension} is a monomorphism $j:A\to B$ such that whenever $m\circ f:A\to C$ is a monomorphism $m$, too, is a monomorphism. An essential extension $n:L\to N$ is \emph{maximal} if for every essential extension $m:L\to M$ there is a unique morphism $f:M\to N$ with $f\circ m=n$. Essential extensions for frames are studied in \cite{ball18}. We now justify the terminology and show that a subcolocale $\kD\se \sll$ is essential if and only if $\mf{Sat}(\kD)\se \kD$ is an essential extension in $\bd{CoFrm}$.

\begin{theorem}
    Let $L$ be a frame and $\mk{D}\se \sll$ a codense subcolocale. Then, $\mk{D}$ is essential if and only if $\mf{Sat}(\mk{D})\se\mk{D}$ is essential in $\bd{CoFrm}$.
\end{theorem}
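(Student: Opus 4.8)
The plan is to work entirely on the coframe side. Since $\bd{CoFrm}$ is isomorphic to $\bd{Frm}$ under order duality, and $\bd{Frm}$ is an (infinitary) variety, monomorphisms in $\bd{CoFrm}$ are exactly the injective coframe homomorphisms, and every morphism factors as a surjection followed by an injection. Coframe surjections out of $\kD$ correspond, up to isomorphism, to subcolocales $\mk{E}\se \kD$ via the conucleus $\nu_{\mk E}$, and such a surjection is non-injective precisely when $\mk E\subsetneq \kD$. Using this factorization, together with the fact that injections are monos, the statement ``$\mf{Sat}(\kD)\se \kD$ is essential'' reduces to: for every proper subcolocale $\mk E\subsetneq \kD$, the restriction $\nu_{\mk E}\colon \mf{Sat}(\kD)\to \mk E$ is non-injective, i.e.\ $\nu_{\mk E}$ identifies two distinct saturated sublocales. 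I would first record two facts valid for an arbitrary subcolocale $\mk E\se \sll$: the conucleus preserves arbitrary joins (as $\mk E$ is closed under joins), and by Proposition \ref{p: subcolocales of SL characterization} it commutes with meeting opens and closeds, that is $\nu_{\mk E}(S\cap \op(x))=\nu_{\mk E}(S)\cap \op(x)$ and $\nu_{\mk E}(S\cap \cl(x))=\nu_{\mk E}(S)\cap \cl(x)$, the nontrivial inclusions using stability of $\mk E$ under $-\cap \op(x)$ and $-\cap \cl(x)$.

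For the direction ``$\mf{Sat}(\kD)\se \kD$ essential $\Rightarrow \kD$ essential'' I would argue contrapositively and set $\mk E_0:=\ca S(\mf{Sat}(\kD))$. If $\kD$ is not essential then $\mk E_0\subsetneq \kD$ is proper, and since $\mf{Sat}(\kD)\se \mk E_0$ the conucleus $\nu_{\mk E_0}$ fixes every saturated sublocale; hence $\nu_{\mk E_0}\colon \mf{Sat}(\kD)\to \mk E_0$ is just the inclusion, which is injective. This exhibits a proper quotient of $\kD$ that is injective on $\mf{Sat}(\kD)$, so the inclusion is not essential.

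For the converse, assume $\kD=\ca S(\mf{Sat}(\kD))$ and let $\mk E\subsetneq \kD$ be proper. By Lemma \ref{l: smallest containing the sat} every element of $\kD$ is a join of sublocales $F\cap \cl(z)$ with $F\in \mf{Sat}(\kD)$ and $z\in L$, so some such generator $E=F\cap \cl(z)$ lies outside $\mk E$ (otherwise $\mk E$, being closed under joins, would contain $\kD$). As $\mk E$ is closed under the difference $-\sm \op(z)$ we get $F\notin \mk E$, and the commutation above gives $\nu_{\mk E}(F)\cap \cl(z)=\nu_{\mk E}(E)\subsetneq E=F\cap \cl(z)$. Now I would exploit coframe distributivity in $\kD$: writing $F=\bwe^{\kD}_i\op(x_i)$ we have $\op(z)\ve^{\kD}\bwe^{\kD}_i\op(x_i)=\bwe^{\kD}_i\op(z\ve x_i)$, so $\op(z)\ve F$ is again saturated. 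Since $\cl(z)$ is linear, $(\op(z)\ve \nu_{\mk E}(F))\cap \cl(z)=\nu_{\mk E}(F)\cap \cl(z)\subsetneq F\cap \cl(z)=(\op(z)\ve F)\cap \cl(z)$, whence $\op(z)\ve \nu_{\mk E}(F)\subsetneq \op(z)\ve F$; while join-preservation and idempotence of $\nu_{\mk E}$ give $\nu_{\mk E}(\op(z)\ve \nu_{\mk E}(F))=\nu_{\mk E}(\op(z))\ve\nu_{\mk E}(F)=\nu_{\mk E}(\op(z)\ve F)$. Thus $\nu_{\mk E}$ identifies the two distinct sublocales $\op(z)\ve F$ and $\op(z)\ve \nu_{\mk E}(F)$, the first of which is saturated.

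The main obstacle is exactly this last step: the partner $\op(z)\ve \nu_{\mk E}(F)$ is in general \emph{not} saturated, so what the construction yields is an identification between a saturated sublocale and a non-saturated one, whereas essentiality demands two distinct saturated sublocales in the same $\nu_{\mk E}$-class. Equivalently, the real content is to rule out a proper coframe quotient of $\kD$ that remains injective on $\mf{Sat}(\kD)$. The saturation closure $s(D)=\bwe^{\kD}\{G\in \mf{Sat}(\kD)\mid D\se G\}$ does not suffice by itself, since injectivity on $\mf{Sat}(\kD)$ forces $s(\nu_{\mk E}(G))=G$ for every saturated $G$, a self-consistent constraint. I expect the resolution to use the explicit description of $\mf{Sat}(\kD)$ through the fitting $\mi{fit}$ and the section $\sigma$ developed above, transporting the identification of $\op(z)\ve F$ with $\op(z)\ve \nu_{\mk E}(F)$ along $\mi{fit}$ and $\sigma$ so as to replace the non-saturated partner by a genuine saturated representative of its class; this is where the bulk of the technical work will lie.
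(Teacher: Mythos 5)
Your reduction of the problem to subcolocale quotients is sound: monos in $\bd{CoFrm}$ are the injective maps, quotients of $\kD$ correspond to subcolocales $\mk{E}\se\kD$, and so the statement becomes ``$\kD$ is essential iff every proper subcolocale $\mk{E}\subsetneq\kD$ fails to be injective on $\mf{Sat}(\kD)$.'' Your first direction (if $\kD$ is not essential, the quotient onto $\ca{S}(\mf{Sat}(\kD))$ is a proper quotient fixing every saturated sublocale) is correct and is exactly the paper's argument for that half. But the substantive half is left open, as you yourself admit: identifying the saturated sublocale $\op(z)\ve F$ with the non-saturated sublocale $\op(z)\ve \nu_{\mk{E}}(F)$ contradicts nothing, because injectivity is only assumed on $\mf{Sat}(\kD)$. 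So the proposal does not prove the theorem; moreover, your closing guess (transporting the identification along $\mi{fit}$ and $\sigma$) is not how the paper closes the gap either.

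The two missing observations are elementary, and with them your own setup closes. First, you do not need two distinct saturated sublocales with \emph{equal} images: the restriction of $\nu_{\mk{E}}$ to the subcoframe $\mf{Sat}(\kD)$ is a lattice homomorphism, and an injective lattice homomorphism is automatically order-reflecting (if $f(A)\se f(B)$ then $f(A\we B)=f(A)$, hence $A\se B$); so it suffices to produce saturated $G_1\nsubseteq G_2$ with $\nu_{\mk{E}}(G_1)\se\nu_{\mk{E}}(G_2)$. Second, the complementation trick that moves a non-inclusion into $\mf{Sat}(\kD)$: since opens and closeds are mutually complementary, $F\cap\cl(z)\se\op(x)\ve\cl(y)$ if and only if $F\cap\op(y)\se\op(x\ve z)$, and both $F\cap\op(y)$ and $\op(x\ve z)$ are saturated (the former by the commutation of Lemma \ref{o:wF}, which you already recorded). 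Now run your argument: from $F\cap\cl(z)\notin\mk{E}$ you get $F\cap\cl(z)\nsubseteq \nu_{\mk{E}}(F)$; since $\kD$ is meet-generated by the sublocales $\op(x)\ve\cl(y)$ (Lemma \ref{l: very easy}), there are $x,y$ with $\nu_{\mk{E}}(F)\se\op(x)\ve\cl(y)$ but $F\cap\cl(z)\nsubseteq\op(x)\ve\cl(y)$. Setting $G_1=F\cap\op(y)$ and $G_2=\op(x\ve z)$, the trick gives $G_1\nsubseteq G_2$, while $\nu_{\mk{E}}(G_1)=\nu_{\mk{E}}(F)\cap\op(y)\se(\op(x)\ve\cl(y))\cap\op(y)=\op(x\we y)\se G_2$, hence $\nu_{\mk{E}}(G_1)\se\nu_{\mk{E}}(G_2)$ --- the desired failure of order-reflection, a contradiction. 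This is precisely the content of the paper's proof, phrased there for an arbitrary coframe map $f$ injective on $\mf{Sat}(\kD)$: join-generation by the $F\cap\cl(z)$, meet-generation by the $\op(x)\ve\cl(y)$, and preservation of complements to trade $\cl(z),\cl(y)$ for $\op(z),\op(y)$.
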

\begin{proof}
    Suppose that $\kD\se \sll$ is an essential subcolocale. Suppose, now, that there is a coframe map $f:\kD\to C$ such that it is injective when restricted to $\mf{Sat}(\kD)$. The elements of the form $\op(x)\ve \cl(y)$ meet-generate $\kD$, by Lemma \ref{l: very easy}, and as it is essential the elements of the form $F\cap \cl(z)$ with $F\in \mf{Sat}(\kD)$ join-generate it, by Lemma \ref{l: smallest containing the sat}. Then, to show injectivity it suffices to show that $F\cap \cl(z)\nsubseteq \op(x)\vee \cl(y)$ implies that $f(F\cap \cl(z))\nleq f(\op(x)\vee \cl(y))$. Our assumption means that $F\cap \op(y)\nsubseteq \op(x \ve z)$. By assumption on $f$, we have that $f(F\cap \op(y))\nleq f(\op(x \ve z))$, and as $f$ is a coframe map this also implies that $f(F)\we f(\op(y))\nleq f(\op(x))\vee f(\op( z))$. As $f$ also preserves complements, this implies $f(F)\we f(\cl(z))\nleq f(\op(x))\vee f(\cl(y))$, and again as $f$ preserves the lattice operations, $f(F\cap \cl(z))\nleq f(\op(x)\vee\cl(y))$ as desired. If $\mf{Sat}(\kD)\se \kD$ is essential in $\bd{CoFrm}$, consider the coframe quotient given by the subcolocale $\ca{S}(\mf{Sat}(\kD))$. This is clearly injective when restricted to $\mf{Sat}(\kD)$, and by essentiality it is also injective on all of $\kD$. But then it is a bijective coframe map, hence an isomorphism, so $\ca{S}(\mf{Sat}(\kD))=\kD$.
\end{proof}

We show some concrete examples of essential subcolocales.

\begin{lemma}\label{l: bp are T0}
    Let $L$ be a frame. Then $\bl(p)=\cl(p)\cap \mi{fit}(\bl(p))$ for every $p\in \pt(L)$. 
\end{lemma}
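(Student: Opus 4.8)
The plan is to first reduce the statement to a concrete description of $\bl(p)$ as a two-element sublocale. Recall that $\bl(p)$ denotes the Booleanization of the closed sublocale $\cl(p)=\up p$, i.e.\ its smallest dense sublocale. Since the bottom element of $\cl(p)$ is $p$ and every sublocale contains the top $1$, any dense sublocale of $\cl(p)$ must contain both $p$ and $1$; and as $p$ is prime, the two-element set $\{1,p\}$ is itself a sublocale and is dense in $\cl(p)$. Hence $\{1,p\}$ is contained in every dense sublocale and is dense, so it is the least one, giving $\bl(p)=\{1,p\}$. The inclusion $\bl(p)\se \cl(p)\cap\mi{fit}(\bl(p))$ is then immediate: $\bl(p)\se \cl(p)$ by construction, and $\bl(p)\se \mi{fit}(\bl(p))$ because fitting is a closure operator.

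For the reverse inclusion I would compute $\mi{fit}(\{1,p\})$ explicitly. The key observation is that, since $p$ is prime, $\{1,p\}\se \op(x)$ if and only if $x\nleq p$. Indeed $\{1,p\}\se\op(x)$ amounts to $p=x\to p$, and using primality one checks that $x\to p=p$ when $x\nleq p$, while $x\to p=1\ne p$ when $x\leq p$ (assuming $p\ne 1$; the case $p=1$ is trivial). Consequently
\[
\mi{fit}(\{1,p\})=\bca\{\op(x)\mid x\nleq p\}=\{a\in L\mid a=x\to a\text{ for all }x\nleq p\}.
\]

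It then remains to show $\cl(p)\cap\mi{fit}(\{1,p\})\se\{1,p\}$. I would take $a$ in this intersection, so $a\geq p$ and $a=x\to a$ for every $x\nleq p$. If $a\ne 1$, then $a\nleq p$ is impossible: substituting $x=a$ into the relation would give $a=a\to a=1$, a contradiction. Hence $a\leq p$, and combined with $a\geq p$ this forces $a=p$, so $a\in\{1,p\}=\bl(p)$. The only real content is the identification $\bl(p)=\{1,p\}$ together with the primality-based description of which open sublocales contain $p$; once these are in place the substitution $x=a$ closes the argument, and I expect this last step to be the crux rather than any lengthy computation.
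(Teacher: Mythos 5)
Your proof is correct and takes essentially the same route as the paper's: both arguments reduce membership in $\mi{fit}(\bl(p))$, via primality of $p$, to the condition that $y\nleq p$ implies $y\to a=a$, and then substitute $a$ itself for $y$ to force $a\leq p$, hence $a=p$. The only cosmetic difference is that you make explicit the identification $\bl(p)=\{1,p\}$ and the trivial inclusion $\bl(p)\se \cl(p)\cap \mi{fit}(\bl(p))$, which the paper leaves implicit.
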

\begin{proof}
    We show $\cl(p)\cap \mi{fit}(\bl(p))\se \bl(p)$. Suppose, then, that $x\in \cl(p)\cap \mi{fit}(\bl(p))$ and $x\neq 1$. Since $x\in \cl(p)$, $p\leq x$. Since $x\in \mi{fit}(\bl(p))$, whenever $p\in \op(y)$ then $x\in \op(y)$. As $p$ is prime, $x\to p=p$ if and only if $x\nleq p$, and so our condition means $y\nleq p$ implies $y\to x=x$. As $x\neq 1$, $x\to x\neq x$, and so $x\leq p$. Then $x=p\in \bl(p)$, as desired.
\end{proof}

\begin{proposition}\label{p: concrete examples}
    For a frame $L$, the following are essential subcolocales of $\sll$.
    \begin{enumerate}
        \item The subcolocale $\mf{S}_{sp}(L)$ of spatial sublocales;
        \item The subcolocale $\mf{S}_b(L)$ of joins of complemented sublocales.
    \end{enumerate}
\end{proposition}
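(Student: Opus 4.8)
The plan is to verify the defining equation $\ca{S}(\mf{Sat}(\kD))=\kD$ in each case, with $\kD$ equal to $\mf{S}_{sp}(L)$ or $\mf{S}_b(L)$. One inclusion is free: saturated elements $\nu_{\kD}(\bca_i\op(x_i))$ lie in $\kD$, so $\mf{Sat}(\kD)\se\kD$, and since $\ca{S}$ returns the smallest subcolocale containing its argument and $\kD$ is a subcolocale, $\ca{S}(\mf{Sat}(\kD))\se\kD$. For the reverse inclusion I would exhibit each join-generator of $\kD$ as a member of $\ca{S}(\mf{Sat}(\kD))$, using that the latter is itself a subcolocale and hence, by Proposition \ref{p: subcolocales of SL characterization}, closed under joins and stable under $-\cap\cl(z)$ for every $z\in L$.

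For $\mf{S}_b(L)$ this is almost immediate. Every open sublocale $\op(y)$ is complemented, hence lies in $\mf{S}_b(L)$, so $\nu_{\mf{S}_b}(\op(y))=\op(y)$ exhibits $\op(y)$ as saturated. Since the elements of $\mf{S}_b(L)$ are exactly the sublocales $\bve_i\cl(x_i)\cap\op(y_i)$, each generator $\op(y_i)\cap\cl(x_i)$ is of the shape (saturated) $\cap\,\cl(x_i)$, so it lies in $\ca{S}(\mf{Sat}(\mf{S}_b(L)))$ by stability under $-\cap\cl(x_i)$; closing under joins recovers all of $\mf{S}_b(L)$.

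For $\mf{S}_{sp}(L)$ the join-generators are the two-element sublocales $\bl(p)=\{1,p\}$ with $p\in\pt(L)$, and the key tool is Lemma \ref{l: bp are T0}. I would set $F:=\nu_{\mf{S}_{sp}}(\mi{fit}(\bl(p)))$; as $\mi{fit}(\bl(p))=\bca_i\op(x_i)$ is fitted, $F$ is saturated. Since $\bl(p)$ is a spatial sublocale contained in $\mi{fit}(\bl(p))$, it is contained in the largest spatial sublocale below $\mi{fit}(\bl(p))$, namely $F$; together with $\bl(p)\se\cl(p)$ this yields $\bl(p)\se F\cap\cl(p)$. Conversely $F\se\mi{fit}(\bl(p))$, so $F\cap\cl(p)\se\mi{fit}(\bl(p))\cap\cl(p)=\bl(p)$ by Lemma \ref{l: bp are T0}. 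Hence $\bl(p)=F\cap\cl(p)$ with $F$ saturated, so $\bl(p)\in\ca{S}(\mf{Sat}(\mf{S}_{sp}(L)))$, and closing under joins gives every spatial sublocale.

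The main obstacle is the $\mf{S}_{sp}$ case: for $\mf{S}_b(L)$ the opens are already saturated, but a two-element sublocale need not be, so one cannot write $\bl(p)$ directly as (saturated) $\cap$ (closed). The content is precisely that replacing $\bl(p)$ by the $\mf{S}_{sp}$-interior of its fitting, and then re-intersecting with $\cl(p)$, returns $\bl(p)$ exactly — which is what Lemma \ref{l: bp are T0} secures.
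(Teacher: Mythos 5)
Your proof is correct and takes essentially the same route as the paper: reduce to the join-generators (the sublocales $\op(x)\cap\cl(y)$ for $\Sb(L)$, the two-element sublocales $\bl(p)$ for $\mf{S}_{sp}(L)$) and exhibit $\bl(p)=\nu_{sp}(\mi{fit}(\bl(p)))\cap\cl(p)$ with $\nu_{sp}(\mi{fit}(\bl(p)))$ saturated, using Lemma \ref{l: bp are T0}. The only cosmetic difference is that where the paper obtains this identity by commuting the conucleus past $-\cap\cl(p)$ via Lemma \ref{o:wF}, you derive it directly from the interior-operator properties of $\nu_{sp}$ (deflationarity, and being the largest spatial sublocale below its argument), which is an equally valid and slightly more self-contained step.
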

\begin{proof}
    For the first item, we recall that $\mf{S}_{sp}(L)$ is join-generated by $\{\bl(p)\mid p\in \pt(L)\}$. Then, it suffices to show that every $\bl(p)$ is $F\cap \cl(p)$ for some $F\in \mf{Sat}(\mf{S}_{sp}(L))$. We let $\nu_{sp}:\sll\to \sll$ be the conucleus associated with $\mf{S}_{sp}(L)\se \sll$. By \ref{o:wF}, and using Lemma \ref{l: bp are T0},
    \[
    \nu_{sp}(\mi{fit}(\bl(p))\cap \cl(p)=\nu_{sp}(\mi{fit}(\bl(p))\cap \cl(p))=\bl(p).
    \]
    By definition of saturated element, $\nu_{sp}(\mi{fit}(\bl(p)))\in \mf{Sat}(\mf{S}_{sp}(L))$, and so the desired claim holds. For the second item, we note that all elements of $\Sb(L)$ are joins of elements of the form $\op(x)\cap \cl(y)$, and $\op(x)\in \mf{Sat}(\Sb(L))$ for every $x\in L$. 
\end{proof}

We now want to characterize essential sublocales as those codense subcolocales of the form $\Delta(\kF)$ for some proper subcolocale $\kF\se \So(L)$.

 \begin{lemma}\label{l: sigma is nucleus}
     Let $\kD\se \sll$ be a codense sublocale. Then, $\nu_{\kD}(\mi{fit}(D))=\sigma(\mi{fit}(D))$ for all $D\in \kD$.
 \end{lemma}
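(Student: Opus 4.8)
The plan is to prove the two sublocales equal by showing they are contained in exactly the same basic sublocales $\cl(x)\ve\op(y)$, and then invoking the fact (recalled in the introduction) that every sublocale is the intersection of all basic sublocales $\op(a)\ve\cl(b)$ containing it. First I would record that the statement even makes sense: since $\kD$ is codense, $\kF:=\mi{fit}[\kD]\se\So(L)$ is proper by Lemma~\ref{l: from denseSL to properdenseSoL}, so $\sigma=\sigma_{\kF}$ is defined on $\kF$, and $F:=\mi{fit}(D)$ lies in $\mi{fit}[\kD]=\kF$. Both $\nu_{\kD}(F)$ and $\sigma(F)$ are therefore well-defined sublocales of $L$, and it suffices to check, for all $x,y\in L$, that $\nu_{\kD}(F)\se\cl(x)\ve\op(y)$ holds if and only if $\sigma(F)\se\cl(x)\ve\op(y)$ does.

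For $\sigma(F)$ this is immediate: Corollary~\ref{c: charsigma} gives $\sigma(F)\se\cl(x)\ve\op(y)$ if and only if $x\leq_F y$, that is, if and only if $F\wF\op(x)\se\op(y)$. For $\nu_{\kD}(F)$ I would reduce to the same condition in three moves. Since $\op(x)$ and $\cl(x)$ are complementary (hence linear) in $\sll$, a routine distributivity computation gives the general equivalence $S\se\cl(x)\ve\op(y)$ iff $S\cap\op(x)\se\op(y)$; applying it to $S=\nu_{\kD}(F)$ yields $\nu_{\kD}(F)\se\cl(x)\ve\op(y)$ iff $\nu_{\kD}(F)\cap\op(x)\se\op(y)$. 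Next, by the first item of Lemma~\ref{o:wF}, $\nu_{\kD}(F)\cap\op(x)=\nu_{\kD}(F\cap\op(x))$. Finally, by the third item of Proposition~\ref{p: from denseSL to denseSoL} we have $F\wF\op(x)=\mi{fit}(\nu_{\kD}(F\cap\op(x)))$, and since $\op(y)$ is fitted, $\mi{fit}(T)\se\op(y)$ iff $T\se\op(y)$; hence $\nu_{\kD}(F\cap\op(x))\se\op(y)$ iff $F\wF\op(x)\se\op(y)$.

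Chaining these equivalences gives $\nu_{\kD}(F)\se\cl(x)\ve\op(y)$ iff $F\wF\op(x)\se\op(y)$ iff $\sigma(F)\se\cl(x)\ve\op(y)$ for every $x,y\in L$, and the conclusion $\nu_{\kD}(F)=\sigma(F)$ follows from the basic representation of sublocales as intersections of such $\cl(x)\ve\op(y)$. I do not expect a genuine obstacle here; the content is simply assembling the three reductions around the common pivot $F\wF\op(x)\se\op(y)$. The one point requiring care is the open/closed bookkeeping — matching $x\leq_F y$ to the basic sublocale $\cl(x)\ve\op(y)$ (and not $\op(x)\ve\cl(y)$), which is forced by the characterization in Corollary~\ref{c: charsigma} together with the complementation identity $S\se\cl(x)\ve\op(y)\iff S\cap\op(x)\se\op(y)$ — and verifying that all invoked facts apply, namely that $\op(x),\cl(x)\in\kD$ (from Lemma~\ref{l: very easy}) so that Lemma~\ref{o:wF} is available, and that $F\in\kF$ so that the meet $F\wF\op(x)$ is the one computed in Proposition~\ref{p: from denseSL to denseSoL}.
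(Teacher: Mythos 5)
Your proposal is correct and takes essentially the same route as the paper's proof: both arguments show that $\nu_{\kD}(\mi{fit}(D))$ satisfies the characterization of $\sigma(\mi{fit}(D))$ from Corollary \ref{c: charsigma}, with Lemma \ref{o:wF} and item \ref{from dense SL item 3} of Proposition \ref{p: from denseSL to denseSoL} doing the work around the same pivot identity $\mi{fit}(\nu_{\kD}(\mi{fit}(D)\cap\op(x)))=\mi{fit}(D)\wF\op(x)$. The only difference is presentational: the paper invokes the second formulation of the corollary together with its uniqueness clause, whereas you use the first formulation and re-derive the uniqueness explicitly via the representation of sublocales as intersections of the basic sublocales $\cl(x)\ve\op(y)$ containing them.
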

\begin{proof}
    Let $D\in \kD$. We use the characterization in \ref{c: charsigma}. We have to show 
    \[
    \mi{fit}(\nu_{\kD}(\mi{fit}(D))\cap \op(x))=\mi{fit}(D)\wF\op(x)
    \]
    for all $x\in L$. By Lemma \ref{o:wF}, the left-hand side is $\mi{fit}(\nu_{\kD}(\mi{fit}(D)\cap \op(x)))$. By item \ref{from dense SL item 3} of Proposition \ref{p: from denseSL to denseSoL}, this equals the right-hand side.
\end{proof}

\begin{lemma}\label{l:fitsubcofrm}\label{l:subcofrm}
Let $\kD\se \sll$ be a codense subcolocale.
    \begin{enumerate}
        \item \label{fitsubcofrm 1}The maps $\sigma:\mi{fit}[\kD]\lra \kD:\mi{fit}$ are order adjoints, with $\mi{fit}\dashv \sigma$;
        \item \label{fitsubcofrm 2}The map $\sigma:\mi{fit}[\kD]\to \kD$ is a subcoframe embedding;
        \item \label{fitsubcofrm 3}$\sigma[\kF]=\mf{Sat}(\kD)$.
    \end{enumerate}
\end{lemma}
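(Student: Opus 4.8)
Throughout this plan write $\kF:=\mi{fit}[\kD]$, which is proper by Lemma \ref{l: from denseSL to properdenseSoL}, so that $\sigma=\sigma_{\kF}$ is defined. The first thing I would record is that $\sigma$ actually takes values in $\kD$ and agrees there with the conucleus: since every $F\in \kF$ is of the form $\mi{fit}(D)$ for some $D\in \kD$, Lemma \ref{l: sigma is nucleus} gives $\sigma(F)=\nu_{\kD}(F)$, so $\sigma[\kF]\se \kD$ and $\sigma:\kF\to \kD$ is well defined. The facts I would use repeatedly are: $\nu_{\kD}(S)$ is the largest element of $\kD$ contained in $S$; $\mi{fit}$ is inflationary and $\nu_{\kD}$ deflationary; $\mi{fit}(F)=F$ for $F\in \kF$ (fitted); and $\mi{fit}(\sigma(F))=F$ by item \ref{sigma basic 1} of Lemma \ref{l: sigma basic}.

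For item (1) I would verify the Galois biconditional $\mi{fit}(D)\se F \iff D\se \sigma(F)$ for $D\in \kD$ and $F\in \kF$ directly. Note first that $\mi{fit}(D)\se F \iff D\se F$: the forward direction uses $D\se \mi{fit}(D)$, the backward direction uses monotonicity of $\mi{fit}$ together with $\mi{fit}(F)=F$. Next, since $D\in \kD$ and $\nu_{\kD}(F)$ is the largest element of $\kD$ below $F$, we have $D\se F \iff D\se \nu_{\kD}(F)=\sigma(F)$. Chaining these two equivalences yields exactly $\mi{fit}(D)\se F \iff D\se \sigma(F)$, and as both maps are monotone this exhibits $\mi{fit}\dashv \sigma$.

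For item (3) I would argue by double inclusion. Every $F\in \kF$ is fitted, hence $F=\bca_i \op(x_i)$ for some family, so $\sigma(F)=\nu_{\kD}(\bca_i \op(x_i))\in \mf{Sat}(\kD)$, giving $\sigma[\kF]\se \mf{Sat}(\kD)$. For the reverse inclusion, take a saturated element $s=\nu_{\kD}(\bca_i \op(x_i))$ and set $F:=\mi{fit}(s)$, which lies in $\mi{fit}[\kD]=\kF$ because $s\in \kD$. Then $\sigma(F)=\nu_{\kD}(\mi{fit}(s))$, and I claim this equals $s$. The inclusion $s\se \nu_{\kD}(\mi{fit}(s))$ is immediate from $s\se \mi{fit}(s)$ and $s=\nu_{\kD}(s)$; for the reverse, the inclusion $s\se \bca_i \op(x_i)$, monotonicity of $\mi{fit}$, and the fact that $\bca_i \op(x_i)$ is fitted give $\mi{fit}(s)\se \bca_i \op(x_i)$, whence $\nu_{\kD}(\mi{fit}(s))\se \nu_{\kD}(\bca_i \op(x_i))=s$. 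Thus $s=\sigma(F)\in \sigma[\kF]$, and $\sigma[\kF]=\mf{Sat}(\kD)$.

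Finally, for item (2): $\sigma$ is injective because $\mi{fit}$ is a left inverse (item \ref{sigma basic 1} of Lemma \ref{l: sigma basic}). By (3) it is a monotone bijection of $\kF$ onto $\mf{Sat}(\kD)$ whose inverse is the monotone restriction $\mi{fit}|_{\mf{Sat}(\kD)}$, hence an order isomorphism $\kF\cong \mf{Sat}(\kD)$; an order isomorphism of complete lattices preserves all meets and joins, so it is a coframe isomorphism onto $\mf{Sat}(\kD)$, and composing with the subcoframe inclusion $\mf{Sat}(\kD)\se \kD$ (noted when $\mf{Sat}(\kD)$ was introduced) exhibits $\sigma$ as a subcoframe embedding. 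The step I expect to be the crux is the reverse inclusion in (3), i.e.\ the identity $\nu_{\kD}(\mi{fit}(s))=s$ for saturated $s$: the nontrivial direction $\nu_{\kD}(\mi{fit}(s))\se s$ is where the interaction between fitting and the conucleus is genuinely used, through the observation that the defining intersection $\bca_i \op(x_i)$ of a saturated element is itself a fitted sublocale containing $s$.
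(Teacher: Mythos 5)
Your proof is correct, and items (2)--(3) follow a genuinely different route from the paper's, with the dependency between them reversed. Item (1) is the paper's argument verbatim: the chain $\mi{fit}(D)\se F \Leftrightarrow D\se F \Leftrightarrow D\se \nu_{\kD}(F)=\sigma(F)$, with the identification $\sigma=\nu_{\kD}$ on $\mi{fit}[\kD]$ coming from Lemma \ref{l: sigma is nucleus}. After that, the paper proves (2) first --- meets are preserved because $\sigma$ is a right adjoint, and binary joins are reduced (implicitly via coframe distributivity and the fact that every element of $\kF$ is a meet of opens) to joins of the form $\op(x)\ve\op(y)$, where item \ref{sigma basic 2} of Lemma \ref{l: sigma basic} applies --- and then obtains (3) as a corollary: $\sigma(\bwe^{\kF}_i\op(x_i))=\bwe^{\kD}_i\op(x_i)=\nu_{\kD}(\bca_i\op(x_i))$. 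You instead prove (3) directly by double inclusion, the crux being the explicit computation $\nu_{\kD}(\mi{fit}(s))=s$ for saturated $s$ (using that the defining intersection $\bca_i\op(x_i)$ is fitted and contains $s$), and then deduce (2) from the resulting order isomorphism $\kF\cong\mf{Sat}(\kD)$ composed with the subcoframe inclusion $\mf{Sat}(\kD)\se\kD$. Your route makes (3) self-contained and avoids the paper's somewhat terse reduction step for binary joins, at the cost of leaning on the paper's unproved (though easy and explicitly stated) note that $\mf{Sat}(\kD)\se\kD$ is a subcoframe inclusion; the paper's route never invokes that note, getting the coframe-map property of $\sigma$ purely from the adjunction and the behaviour of $\sigma$ on opens. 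Both arguments rest on the same two engines, Lemma \ref{l: sigma is nucleus} and Lemma \ref{l: sigma basic}, so the difference is one of organization rather than of underlying ideas.
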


\begin{proof}
Let $\kD\se \sll$ be a dense subcolocale.
\begin{enumerate}
    \item For every $D\in \kD$ and $F\in \mi{fit}[\kD]$, the following are equivalent.
    \begin{prooftree}
        \AxiomC{$\mi{fit}(D)\se F$}
        \UnaryInfC{$D\se F$}
        \UnaryInfC{$D\se \nu_{\kD}(F)$.}
    \end{prooftree}
As $\nu_{\kD}(F)=\sigma(F)$ for all $F\in \mi{fit}[\kD]$, by Lemma \ref{l: sigma is nucleus}, the result follows.

    \item As we have just shown, $\sigma$ is a right adjoint, and so it preserves all meets. For binary joins, it suffices to show it preserves joins of the form $\op(x)\ve \op(y)$, but indeed $\sigma(\op(x\vee y))=\op(x\vee y)$ by item \ref{sigma basic 2} of Lemma \ref{l: sigma basic}. Injectivity of $\sigma$ follows from its left adjoint being surjective. 
    \item By item \ref{sigma basic 2}, of \ref{l: sigma basic}, $\sigma(\op(x))=\op(x)$ for all $x\in L$. Additionally, we have just shown that $\sigma:\mi{fit}[\kD]\to \kD$ is a coframe map. Hence, for all collections $x_i\in L$, $\sigma(\bwe^{\kF}_i \op(x_i))$ is $\nu_{\kD}(\bca_i \op(x_i))$, and so it is saturated. Since this holds for all families $x_i\in L$, all saturated elements of $\kD$ are of this form.\qedhere
\end{enumerate}

\end{proof}

\begin{lemma}\label{l: the main order adjunction}
    For every codense subcolocale $\kD\se \sll$, and every proper subcolocale $\kF\se \So(L)$, $\Delta(\kF)\se \kD$ if and only if $\kF\se \mi{fit}[\kD]$.
\end{lemma}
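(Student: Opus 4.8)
I read the statement as the adjointness condition $\Delta\dashv\mi{fit}[-]$ between the monotone maps $\Delta$ (on proper subcolocales of $\So(L)$) and $\mi{fit}[-]$ (on codense subcolocales of $\sll$), so the plan is to prove the two implications separately.

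The forward implication $\Delta(\kF)\se\kD\Rightarrow\kF\se\mi{fit}[\kD]$ is the easy one: I would apply the monotone map $\mi{fit}[-]$ to $\Delta(\kF)\se\kD$ to obtain $\mi{fit}[\Delta(\kF)]\se\mi{fit}[\kD]$, and then rewrite the left-hand side with Proposition \ref{l: proper is fixpoint}, which gives $\mi{fit}[\Delta(\kF)]=\kF$ for proper $\kF$. No further input is needed.

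For the converse $\kF\se\mi{fit}[\kD]\Rightarrow\Delta(\kF)\se\kD$, I would first reduce to generators. Since $\Delta(\kF)=\ca{S}(\sigma[\kF])$ is by definition the smallest subcolocale containing $\sigma[\kF]$ and $\kD$ is itself a subcolocale, we have $\Delta(\kF)\se\kD$ if and only if $\sigma(F)\in\kD$ for every $F\in\kF$. Fix such an $F$. As $\kF\se\mi{fit}[\kD]$ we may write $F=\mi{fit}(D)$ with $D\in\kD$, and Lemma \ref{l: sigma is nucleus} (for the subcolocale $\mi{fit}[\kD]$) gives $\sigma_{\mi{fit}[\kD]}(F)=\nu_{\kD}(F)\in\kD$. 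Comparing the relations $\leq_F$ in the two ambient subcolocales, passing to the larger $\mi{fit}[\kD]$ enlarges the meets $F\wF\op(a)$, hence shrinks $\leq_F$ and enlarges $\sigma$, so $\sigma_{\kF}(F)\se\sigma_{\mi{fit}[\kD]}(F)=\nu_{\kD}(F)$. It then remains to upgrade this inclusion to membership $\sigma_{\kF}(F)\in\kD$. A first useful observation is that the basic pieces stay inside $\kD$: for each pair $a\leq_F b$, item \ref{properties of difference 1} of Lemma \ref{l: properties of difference} gives $\nu_{\kD}(F)\cap(\op(a)\vee\cl(b))=\nu_{\kD}(F){\sm}(\cl(a)\cap\op(b))$, which lies in $\kD$ because $\kD$ is closed under the coframe difference; intersecting $\nu_{\kD}(F)$ with the family defining $\sigma_{\kF}(F)$ thus exhibits $\sigma_{\kF}(F)$ as a meet of members of $\kD$.

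The main obstacle is precisely this promotion from a meet of $\kD$-elements to membership in $\kD$. Because $\sigma$ genuinely depends on its ambient subcolocale (the meet $F\wF\op(a)$ is computed in $\kF$, not in $\So(L)$), the inclusion $\sigma_{\kF}(F)\se\nu_{\kD}(F)$ can be strict, so Lemma \ref{l: sigma is nucleus} does not by itself deposit $\sigma_{\kF}(F)$ in $\kD$; and since $\kD$ is closed under joins and differences but not under arbitrary intersections, a meet of elements of $\kD$ need not return to $\kD$. The real work, which I expect to be the hard step, is to show that the particular meet $\sigma_{\kF}(F)=\bigcap\{\op(a)\vee\cl(b)\mid a\leq_F b\}$ is forced back into $\kD$; I would try to extract this from the precongruence structure of $\leq_F$ (Proposition \ref{p:frmcong}) together with the fixpoint identity $\mi{fit}(\sigma_{\kF}(F))=F$ of Lemma \ref{l: sigma basic}, which pin $\sigma_{\kF}(F)$ tightly between the differences above and the fitting $F$, rather than from the pointwise bound $\sigma_{\kF}(F)\se\nu_{\kD}(F)$ alone. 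The cleanly working special case $\kF=\mi{fit}[\kD]$, where $\sigma_{\kF}(F)=\nu_{\kD}(F)$ outright and the counit $\Delta(\mi{fit}[\kD])=\ca{S}(\mf{Sat}(\kD))\se\kD$ is immediate, is what convinces me the general statement should hold.
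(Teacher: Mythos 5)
Your forward implication is word-for-word the paper's: apply the monotone map $\mi{fit}[-]$ and rewrite $\mi{fit}[\Delta(\kF)]=\kF$ via Proposition \ref{l: proper is fixpoint}. The important point for the converse is this: the two steps you carry out before getting stuck (reduce $\Delta(\kF)\se\kD$ to showing $\sigma(F)\in\kD$ for all $F\in\kF$, write $F=\mi{fit}(D)$ with $D\in\kD$, and invoke Lemma \ref{l: sigma is nucleus} to get $\nu_{\kD}(F)\in\kD$) are the \emph{whole} of the paper's converse. The paper stops exactly where you stopped, concluding ``this is indeed in $\kD$''; that is, it silently reads Lemma \ref{l: sigma is nucleus} as a statement about the map $\sigma_{\kF}$ appearing in the definition $\Delta(\kF)=\ca{S}(\sigma_{\kF}[\kF])$. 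So, measured against the paper, your proposal is the paper's proof plus a scruple the paper does not address, and minus the conclusion.

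That scruple is mathematically legitimate, so you should know you have put your finger on a real gap rather than missed an available tool. Lemma \ref{l: sigma is nucleus} is proved for the ambient $\mi{fit}[\kD]$ (its proof runs through item \ref{from dense SL item 3} of Proposition \ref{p: from denseSL to denseSoL}, i.e.\ meets computed in $\mi{fit}[\kD]$), whereas $\Delta(\kF)$ is generated by $\sigma_{\kF}$, whose defining relation $\leq^{\kF}_F$ uses meets in $\kF$; and these genuinely differ when $\kF\subsetneq\mi{fit}[\kD]$, so your inclusion $\sigma_{\kF}(F)\se\sigma_{\mi{fit}[\kD]}(F)=\nu_{\kD}(F)$ can indeed be strict. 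A concrete witness: let $L=\Om(X)$ for $X$ an infinite set with the cofinite topology, $a$ the complement of a point, and $F=\mi{fit}(\cl(a))$. Then $F=\cl(a)\cup\{0\}$ lies in $\mi{fit}[\Sb(L)]$, and Lemma \ref{l: sigma is nucleus} applied with $\kD=\Sb(L)$ gives $\sigma_{\mi{fit}[\Sb(L)]}(F)=\nu_{\Sb(L)}(F)=\cl(a)$, while in the ambient $\So(L)=\mi{fit}[\sll]$ one has $\sigma_{\So(L)}(F)=F\neq\cl(a)$. So the identification of the two sigmas cannot be made for free; what would close the argument is either a direct proof that $\sigma_{\kF}(F)\in\kD$, or monotonicity of $\Delta$ on proper subcolocales, which yields $\Delta(\kF)\se\Delta(\mi{fit}[\kD])\se\kD$ --- the last inclusion being exactly the special case $\kF=\mi{fit}[\kD]$ that you did establish. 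Neither is supplied in your proposal, so your converse direction is genuinely incomplete; but it is equally unsupplied in the paper's own text, so the deficiency you identified is one the paper shares.
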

\begin{proof}
   If $\Delta(\kF)\se \kD$, then $\mi{fit}[\Delta(\kF)]\se \mi{fit}[\kD]$, but the left-hand side is $\kF$, by Proposition \ref{l: proper is fixpoint}. Suppose that $\kF\se \mi{fit}[\kD]$. By definition of $\Delta(\kF)$, it suffices to show $\sigma(F)\in \kD$, for all $F\in \kF$. By our assumption, every such $F$ is $\mi{fit}(D)$ for some $D\in \mi{fit}(D)$, and, as  $\sigma(\mi{fit}(D))=\nu_{\kD}(\mi{fit}(D))$ by Lemma \ref{l: sigma is nucleus}, this is indeed in $\kD$.
\end{proof}
\begin{proposition}\label{p: delta iff fixpoint}
    Let $L$ be a frame. A codense subcolocale $\kD\se \sll$ is essential if and only if $\kD\se \Delta(\mi{fit}[\kD])$.
\end{proposition}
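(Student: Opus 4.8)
The plan is to reduce the statement to the defining equation of essentiality by rewriting the right-hand side $\Delta(\mi{fit}[\kD])$ in terms of $\mf{Sat}(\kD)$. First I would note that since $\kD\se\sll$ is codense, $\mi{fit}[\kD]$ is a \emph{proper} subcolocale of $\So(L)$ by Lemma \ref{l: from denseSL to properdenseSoL}, so that $\Delta(\mi{fit}[\kD])$ is defined and, by definition of $\Delta$, equals $\ca{S}(\sigma[\mi{fit}[\kD]])$. Then I would invoke item \ref{fitsubcofrm 3} of Lemma \ref{l:fitsubcofrm}, which identifies $\sigma[\mi{fit}[\kD]]=\mf{Sat}(\kD)$. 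Combining these two facts gives $\Delta(\mi{fit}[\kD])=\ca{S}(\mf{Sat}(\kD))$, which is exactly the object appearing in the definition of an essential subcolocale.

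Next I would record that one of the two inclusions is automatic and costs nothing. As observed in the text, $\mf{Sat}(\kD)\se\kD$ is a subcoframe inclusion, and $\kD$ is itself a subcolocale of $\sll$. Since $\ca{S}(-)$ produces the \emph{smallest} subcolocale containing a given set, and $\kD$ is one such subcolocale containing $\mf{Sat}(\kD)$, we always have $\ca{S}(\mf{Sat}(\kD))\se\kD$. Via the identification of the previous paragraph, this says $\Delta(\mi{fit}[\kD])\se\kD$ holds unconditionally, whether or not $\kD$ is essential.

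Finally I would close the equivalence. By definition, $\kD$ is essential precisely when $\kD=\ca{S}(\mf{Sat}(\kD))$. In the presence of the automatic inclusion $\ca{S}(\mf{Sat}(\kD))\se\kD$, this equality is equivalent to the reverse inclusion $\kD\se\ca{S}(\mf{Sat}(\kD))$, and rewriting the right-hand side as $\Delta(\mi{fit}[\kD])$ yields exactly the condition $\kD\se\Delta(\mi{fit}[\kD])$ in the statement. I do not anticipate a genuine obstacle here: the entire content is carried by Lemma \ref{l:fitsubcofrm}(\ref{fitsubcofrm 3}), and the only points requiring care are checking that $\Delta$ is indeed being applied to a proper subcolocale (so that $\sigma$ is available) and that $\ca{S}(\mf{Sat}(\kD))$ is genuinely a subcolocale trapped inside $\kD$, both of which are immediate from the cited results.
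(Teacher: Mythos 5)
Your proof is correct and follows essentially the same route as the paper's: both rewrite $\Delta(\mi{fit}[\kD])$ as $\ca{S}(\sigma[\mi{fit}[\kD]])=\ca{S}(\mf{Sat}(\kD))$ via Lemma \ref{l:subcofrm} and then appeal to the definition of essentiality. The only difference is that you explicitly justify the automatic inclusion $\ca{S}(\mf{Sat}(\kD))\se\kD$ and the properness of $\mi{fit}[\kD]$, details the paper leaves implicit.
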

\begin{proof}
By definition, $\Delta(\mi{fit}[\kD])=\ca{S}(\sigma[\mi{fit}[\kD]])$. By Lemma \ref{l:subcofrm}, $\sigma[\mi{fit}[\kD]]=\mf{Sat}(\kD)$. Then, it suffices to show that $\kD$ is essential if and only if $\kD\se \ca{S}(\mf{Sat}(\kD))$, but this just the definition of essentiality.
\end{proof}

We have obtained the main theorem. We call $\ca{CD}_{ess}(\sll)$ the collection of essential codense subcolocales, and $\ca{PC}(\So(L))$ the collection of proper subcolocales of $\So(L)$.
\begin{theorem}\label{t: main theorem}
   There is an order adjunction $\Delta:\ca{PC}(\So(L))\lra \ca{CD}(\sll):\mi{fit}[-]$ with $\Delta \dashv \mi{fit}[-]$, and which maximally restricts to an isomorphism $\ca{PC}(\So(L))\cong \ca{CD}_{ess}(\sll)$.
\end{theorem}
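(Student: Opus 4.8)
The plan is to recognize that essentially all of the substantive content has already been proved, and that this theorem is the assembly of three earlier results into the standard picture of a Galois connection together with its maximal restriction to an isomorphism on fixpoints. First I would check that the two maps have the claimed domains and codomains. That $\mi{fit}[-]$ sends a codense subcolocale of $\sll$ to a proper subcolocale of $\So(L)$ is exactly Lemma \ref{l: from denseSL to properdenseSoL}, and it is monotone because it is a direct image. For $\Delta$, the set $\Delta(\kF)=\ca{S}(\sigma[\kF])$ is a subcolocale of $\sll$ by construction; it is moreover codense, since $\kF$ contains all opens and so $\op(1)\in \kF$, whence $L=\op(1)=\sigma(\op(1))\in \sigma[\kF]\se \Delta(\kF)$ using item \ref{sigma basic 2} of Lemma \ref{l: sigma basic}. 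Thus $\Delta:\ca{PC}(\So(L))\to \ca{CD}(\sll)$ and $\mi{fit}[-]:\ca{CD}(\sll)\to \ca{PC}(\So(L))$ are genuinely well-defined maps of posets.

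The adjunction $\Delta\dashv \mi{fit}[-]$ is then precisely the biconditional $\Delta(\kF)\se \kD$ if and only if $\kF\se \mi{fit}[\kD]$, which is exactly the content of Lemma \ref{l: the main order adjunction}. Once this defining equivalence is in hand, monotonicity of $\Delta$ (and again of $\mi{fit}[-]$) is automatic by the usual Galois-connection argument, so nothing further is needed for the first assertion.

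For the maximal restriction, I would invoke the general fact that a monotone Galois connection restricts to an order-isomorphism between the fixpoints of the closure operator $\mi{fit}[-]\circ \Delta$ on the left and the fixpoints of the kernel operator $\Delta\circ \mi{fit}[-]$ on the right, and that this restriction is the largest one landing in a bijection. The two ingredients that pin down these fixpoint sets are already available. Proposition \ref{l: proper is fixpoint} gives $\mi{fit}[\Delta(\kF)]=\kF$ for every proper $\kF$, so the entire poset $\ca{PC}(\So(L))$ consists of fixpoints on the left. On the right, the counit inclusion $\Delta(\mi{fit}[\kD])\se \kD$ always holds (take $\kF=\mi{fit}[\kD]$ in the adjunction), so Proposition \ref{p: delta iff fixpoint} shows that $\kD$ is essential exactly when $\kD\se \Delta(\mi{fit}[\kD])$, that is, exactly when $\Delta(\mi{fit}[\kD])=\kD$, i.e. exactly when $\kD$ is a fixpoint. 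Hence the fixpoint sets are $\ca{PC}(\So(L))$ and $\ca{CD}_{ess}(\sll)$, and $\Delta$, $\mi{fit}[-]$ cut down to mutually inverse order-isomorphisms between them.

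I do not expect a genuinely hard step: the frame-theoretic difficulties were all dispatched in establishing Lemma \ref{l: the main order adjunction}, Proposition \ref{l: proper is fixpoint}, and Proposition \ref{p: delta iff fixpoint}. The only points demanding care are bookkeeping ones — confirming codensity of $\Delta(\kF)$ so that $\Delta$ really lands in $\ca{CD}(\sll)$, and being precise about the direction of the counit so that the one-sided inclusion $\kD\se \Delta(\mi{fit}[\kD])$ of Proposition \ref{p: delta iff fixpoint} upgrades to the equality characterizing a fixpoint. The word \emph{maximal} should be read in the standard sense that no larger sub-posets admit an isomorphism compatible with the adjunction, which is exactly the fixpoint description just used.
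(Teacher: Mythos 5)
Your proposal is correct and takes essentially the same route as the paper, whose proof simply assembles Lemma \ref{l: the main order adjunction} (the adjunction biconditional), Proposition \ref{l: proper is fixpoint} (every proper subcolocale is a fixpoint of $\mi{fit}[-]\circ\Delta$), and Proposition \ref{p: delta iff fixpoint} (essential $=$ fixpoint of $\Delta\circ\mi{fit}[-]$) via the standard fixpoint description of a Galois connection. Your extra bookkeeping --- checking codensity of $\Delta(\kF)$ via $\sigma(\op(1))=L$, and deriving the counit inclusion $\Delta(\mi{fit}[\kD])\se\kD$ from the adjunction so that Proposition \ref{p: delta iff fixpoint} characterizes the right-hand fixpoints --- is correct and fills in details the paper leaves implicit.
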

\begin{proof}
    This follows from Lemmas \ref{l: the main order adjunction} and \ref{l: proper is fixpoint}.
\end{proof}

We want to use the characterization in \ref{p: delta iff fixpoint} to provide an example of a subcolocale of $\sll$ which is not essential.

\begin{example}
As $\Sb(L)$ is essential, by Proposition \ref{p: concrete examples}, $\Delta(\mi{fit}[\Sb(L)])=\Sb(L)$ by Proposition \ref{p: delta iff fixpoint}. By Lemma \ref{l: SbL and SeeL have the same fitting}, $\mi{fit}[\Sb(L)]=\mi{fit}[\See(L)]$. Thus, if for some frame we had $\See(L)\nsubseteq\Sb(L)$, this would imply $\See(L)\nsubseteq \Delta(\mi{fit}[\See(L)])$, giving the desired counterexample as $\See(L)$ would not be essential by Proposition \ref{p: delta iff fixpoint}. For sublocales witnessing $\See(L)\nsubseteq \Sb(L)$, once again we refer to Example 5.12 in \cite{bezhanishvili2025mckinseytarskialgebrasraneyextensions} and Example \ref{e: igor example}.
\end{example}

\section{The categories \texorpdfstring{$\bd{Raney}$}{Raney} and \texorpdfstring{$\bd{SZDBF}$}{SZDBF}}

\subsection{Objects}
 We say that a strictly zero-dimensional biframe $(L,\kD)$ is \emph{essential} if $\kD\se \sll$ is an essential subcolocale. We say that a Raney extension $(L,\mk{F})$ is \emph{proper} if $\mk{F}\se \So(L)$ is a proper subcolocale. We will use the results of the previous section to establish a bijection between proper Raney extensions and essential strictly zero-dimensional biframes. For a proper Raney extension $(L,\kF)$ and for a strictly zero-dimensional biframe $(M,\kD)$ we define:
 \begin{align*}
    \Delta(L,\kF)= (L,\Delta(\kF)), && \mi{fit}(M,\kD)=(M,\mi{fit}[\kD]).
 \end{align*}
 Theorem \ref{t: main theorem}, then, amounts to the following.
 \begin{theorem}
  The assignments $\mi{fit}$ and $\Delta$ are mutually inverse bijections between proper Raney extensions and essential strictly zero-dimensional biframes.
 \end{theorem}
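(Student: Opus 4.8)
The plan is to recognize this statement as nothing more than the object-level repackaging of Theorem \ref{t: main theorem}. Both object operations fix the underlying frame, since $\Delta(L,\kF)=(L,\Delta(\kF))$ and $\mi{fit}(M,\kD)=(M,\mi{fit}[\kD])$ alter only the subcolocale component. Consequently the claimed bijection splits as a union, over each frame $L$, of a bijection between the proper subcolocales of $\So(L)$ and the essential codense subcolocales of $\sll$. It therefore suffices to produce, for a fixed $L$, mutually inverse assignments $\Delta\colon\ca{PC}(\So(L))\to\ca{CD}_{ess}(\sll)$ and $\mi{fit}[-]\colon\ca{CD}_{ess}(\sll)\to\ca{PC}(\So(L))$, which is precisely the maximal restriction recorded in Theorem \ref{t: main theorem}.

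First I would verify well-definedness. For the forward direction, given a proper subcolocale $\kF\se\So(L)$, Proposition \ref{l: proper is fixpoint} gives $\mi{fit}[\Delta(\kF)]=\kF$, so $\Delta(\kF)$ is a fixpoint of $\Delta\circ\mi{fit}[-]$; by Proposition \ref{p: delta iff fixpoint} this makes $\Delta(\kF)$ essential, and it is codense since Theorem \ref{t: main theorem} places $\Delta$ into $\ca{CD}(\sll)$ (concretely $\sigma(\op(1))=\op(1)=L\in\Delta(\kF)$ by item \ref{sigma basic 2} of Lemma \ref{l: sigma basic}). Hence $\Delta(L,\kF)$ is an essential strictly zero-dimensional biframe. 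For the backward direction, given an essential, hence codense, subcolocale $\kD\se\sll$, Lemma \ref{l: from denseSL to properdenseSoL} guarantees that $\mi{fit}[\kD]$ is proper, so $\mi{fit}(L,\kD)$ is a proper Raney extension.

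It then remains to check that the two composites are identities. On a proper Raney extension $(L,\kF)$ I compute $\mi{fit}(\Delta(L,\kF))=(L,\mi{fit}[\Delta(\kF)])=(L,\kF)$ using Proposition \ref{l: proper is fixpoint}. On an essential strictly zero-dimensional biframe $(L,\kD)$ I compute $\Delta(\mi{fit}(L,\kD))=(L,\Delta(\mi{fit}[\kD]))$; here the counit inclusion $\Delta(\mi{fit}[\kD])\se\kD$ of the adjunction $\Delta\dashv\mi{fit}[-]$ (Lemma \ref{l: the main order adjunction}) together with the inclusion $\kD\se\Delta(\mi{fit}[\kD])$ supplied by essentiality (Proposition \ref{p: delta iff fixpoint}) forces $\Delta(\mi{fit}[\kD])=\kD$, returning $(L,\kD)$.

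The substantive work is already done in the previous section, so there is no serious obstacle: the only point requiring care is matching the defining condition of each category to the correct fixpoint class, namely that \emph{properness} on the $\So(L)$ side corresponds to being a fixpoint of $\mi{fit}[-]\circ\Delta$ and \emph{essentiality} on the $\sll$ side corresponds to being a fixpoint of $\Delta\circ\mi{fit}[-]$. This matching is exactly what the maximal-restriction clause of Theorem \ref{t: main theorem} encodes, so once the operations are seen to preserve the underlying frame the theorem follows immediately.
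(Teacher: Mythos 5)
Your proposal is correct and follows exactly the paper's route: the paper proves this theorem simply by observing that, since both assignments fix the underlying frame, it is the object-level restatement of Theorem \ref{t: main theorem}. Your write-up merely unpacks the details the paper leaves implicit (well-definedness via Proposition \ref{l: proper is fixpoint}, Proposition \ref{p: delta iff fixpoint}, and Lemmas \ref{l: from denseSL to properdenseSoL} and \ref{l: the main order adjunction}), and does so correctly.
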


\subsection{Morphisms}
The assignment $\mi{fit}$ can be easily extended to morphisms. By Lemma \ref{l:subcofrm}, there is a subcoframe inclusion $\sigma:\mi{fit}[\kD]\to \kD$ for every codense subcolocale $\kD$. Then, every morphism $f:(L,\kD)\to (M,\mk{E})$ determines a coframe map $\mi{fit}(f):\mi{fit}[\kD]\to \mi{fit}[\mk{E}]$, which further restricts to the open sublocales to yield a frame map isomorphic to $f:L\to M$. 

\begin{proposition}
    There is a functor $\mi{fit}:\bd{SZDBF}\to \bd{Raney}$, whose essential image consists of the proper Raney extensions.
\end{proposition}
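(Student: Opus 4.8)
The plan is to treat $\mi{fit}$ as the identity on underlying frame maps, so that functoriality becomes automatic and the whole content of the statement reduces to two things: that the assignment is well defined on morphisms, and the computation of the essential image. On objects $\mi{fit}(M,\kD)=(M,\mi{fit}[\kD])$ is a proper Raney extension by item \ref{from dense SL item 1} of Proposition \ref{p: from denseSL to denseSoL} together with Lemma \ref{l: from denseSL to properdenseSoL}. Since a morphism of either category is a bare frame map satisfying an extension condition, I send the frame map $f\colon L\to M$ underlying a morphism $f\colon(L,\kD)\to(M,\kE)$ of $\bd{SZDBF}$ to the same frame map, now viewed as a candidate morphism $(L,\mi{fit}[\kD])\to(M,\mi{fit}[\kE])$ of $\bd{Raney}$; with this reading identities and composites are preserved on the nose, and the only thing to check is that $f$ satisfies the Raney condition, i.e.\ that $\op(f)$ extends to a coframe map $\mi{fit}(f)\colon\mi{fit}[\kD]\to\mi{fit}[\kE]$.

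I would build this extension as $\mi{fit}(f)=\mi{fit}\circ\overline{f}\circ\sigma$, where $\sigma\colon\mi{fit}[\kD]\to\kD$ is the subcoframe embedding of item \ref{fitsubcofrm 2} of Lemma \ref{l:subcofrm}, $\overline{f}\colon\kD\to\kE$ is the coframe map extending $\cl(f)$ carried by the $\bd{SZDBF}$ morphism, and $\mi{fit}\colon\kE\to\mi{fit}[\kE]$ is the fitting corestriction. The first key step is the identity $\overline{f}(\op(x))=\op(f(x))$. Since $\kD$ is codense it contains every $\op(x)$ and $\cl(x)$ together with $\{1\}$ and $L$ (item \ref{very easy 1} of Lemma \ref{l: very easy}), so by item \ref{opcl3} the pair $\op(x),\cl(x)$ is complementary in $\kD$, and similarly in $\kE$; as a coframe map preserves $\bot$, $\top$ and finite meets and joins, it preserves complements of complemented elements, so $\overline{f}(\op(x))$ is the complement of $\overline{f}(\cl(x))=\cl(f(x))$ in $\kE$, which is $\op(f(x))$. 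Combining this with $\sigma(\op(x))=\op(x)$ (item \ref{sigma basic 2} of Lemma \ref{l: sigma basic}) and $\mi{fit}(\op(f(x)))=\op(f(x))$ shows $\mi{fit}(f)(\op(x))=\op(f(x))$, so $\mi{fit}(f)$ does extend $\op(f)$.

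The hard part will be checking that $\mi{fit}(f)$ is genuinely a coframe map, because the final application of $\mi{fit}$ does not preserve meets in general. I would get around this by showing that $\overline{f}\circ\sigma$ factors through the subcoframe $\mf{Sat}(\kE)\se\kE$: by item \ref{fitsubcofrm 3} of Lemma \ref{l:subcofrm} the map $\sigma$ lands in $\mf{Sat}(\kD)$, a saturated element is of the form $\bwe^{\kD}_i\op(x_i)$, and applying $\overline{f}$, which preserves arbitrary meets, together with $\overline{f}(\op(x_i))=\op(f(x_i))$ yields $\bwe^{\kE}_i\op(f(x_i))\in\mf{Sat}(\kE)$. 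On $\mf{Sat}(\kE)$ the map $\mi{fit}$ is exactly the inverse of the coframe isomorphism $\sigma\colon\mi{fit}[\kE]\xrightarrow{\ \cong\ }\mf{Sat}(\kE)$ (items \ref{fitsubcofrm 2} and \ref{fitsubcofrm 3} of Lemma \ref{l:subcofrm} give that $\sigma$ is a coframe embedding with image $\mf{Sat}(\kE)$, and item \ref{sigma basic 1} of Lemma \ref{l: sigma basic} gives $\mi{fit}\circ\sigma=\mi{id}$), hence a coframe isomorphism. Thus $\mi{fit}(f)$ is a composite of coframe maps, so it is one. For functoriality nothing more is needed, since on underlying frame maps $\mi{fit}$ is the identity; if one prefers to track the explicit extensions, the same factorisation shows $\mi{fit}(g)\circ\mi{fit}(f)=\mi{fit}(g\circ f)$, because $\overline{f}$ carries $\mf{Sat}(\kD)$ into $\mf{Sat}(\kE)$, where $\sigma\circ\mi{fit}$ is the identity, and $\overline{g\circ f}=\overline{g}\circ\overline{f}$.

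Finally, for the essential image I would prove that the image is already exactly the class of proper Raney extensions, which is closed under isomorphism. Each $(M,\mi{fit}[\kD])$ is proper by Lemma \ref{l: from denseSL to properdenseSoL}. Conversely, given a proper Raney extension $(L,\kF)$, the subcolocale $\Delta(\kF)=\ca{S}(\sigma[\kF])$ is codense, since $\sigma(\op(1))=\op(1)=L$ lies in it, so $(L,\Delta(\kF))$ is an object of $\bd{SZDBF}$, and $\mi{fit}[\Delta(\kF)]=\kF$ by Proposition \ref{l: proper is fixpoint}; hence $(L,\kF)=\mi{fit}(L,\Delta(\kF))$ is in the image. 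Therefore the essential image of $\mi{fit}$ is precisely the proper Raney extensions.
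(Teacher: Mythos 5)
Your proposal is correct and follows essentially the same route as the paper, which constructs $\mi{fit}(f)$ from the subcoframe inclusion $\sigma:\mi{fit}[\kD]\to\kD$ of Lemma \ref{l:subcofrm} and obtains the image characterization from Lemma \ref{l: from denseSL to properdenseSoL} and Proposition \ref{l: proper is fixpoint}; the paper leaves the verification at the level of a sketch, and your factorization of $\overline{f}\circ\sigma$ through $\mf{Sat}(\kE)$, where $\mi{fit}$ inverts $\sigma$, is precisely the detail needed to see that $\mi{fit}\circ\overline{f}\circ\sigma$ is a coframe map.
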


On the other hand, the assignment $(L,\kF)\mapsto \Delta(L,\kF)$ cannot be extended to morphisms in a similar fashion. We will show that there are Raney morphisms 
\[
f:(L,\kF)\to (M,\mk{G})
\]
such that the frame map $f:L\to M$ does not lift to a map 
\[
f:(L,\Delta(\kF))\to (M,\Delta(\mk{G})).
\]

\begin{lemma}\label{l: delta fit of SbL is SbL}
$\Delta(L,\mi{fit}[\Sb(L)])=(L,\Sb(L))$ for all frames $L$.
\end{lemma}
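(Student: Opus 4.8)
The plan is to reduce the claimed object-level identity to a statement about subcolocales, and then invoke the machinery of the previous section. Since $\Delta(L,\kF)=(L,\Delta(\kF))$ by definition, and $\mi{fit}[\Sb(L)]\se \So(L)$ is a proper subcolocale by Corollary \ref{c: concrete examples of proper}, the equation $\Delta(L,\mi{fit}[\Sb(L)])=(L,\Sb(L))$ is equivalent to the equality of subcolocales
\[
\Delta(\mi{fit}[\Sb(L)])=\Sb(L).
\]
So the whole proof amounts to showing that $\Sb(L)$ is a fixpoint of the round-trip $\kD\mapsto \Delta(\mi{fit}[\kD])$.

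First I would establish the inclusion $\Delta(\mi{fit}[\Sb(L)])\se \Sb(L)$. This is the counit inequality of the main adjunction and holds for \emph{every} codense subcolocale. Concretely, applying Lemma \ref{l: the main order adjunction} with the proper subcolocale $\kF=\mi{fit}[\Sb(L)]$ and the codense subcolocale $\kD=\Sb(L)$, the equivalence ``$\Delta(\kF)\se \kD$ iff $\kF\se \mi{fit}[\kD]$'' reduces the desired inclusion to the trivial statement $\mi{fit}[\Sb(L)]\se \mi{fit}[\Sb(L)]$. Hence this direction costs nothing.

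Next I would establish the reverse inclusion $\Sb(L)\se \Delta(\mi{fit}[\Sb(L)])$. Here the key input is that $\Sb(L)$ is an \emph{essential} subcolocale, which is exactly item (2) of Proposition \ref{p: concrete examples}. By Proposition \ref{p: delta iff fixpoint}, a codense subcolocale is essential precisely when it satisfies $\kD\se \Delta(\mi{fit}[\kD])$; applying this with $\kD=\Sb(L)$ gives the inclusion directly. Combining the two inclusions yields $\Delta(\mi{fit}[\Sb(L)])=\Sb(L)$, as required. Equivalently, one may simply observe that, by Theorem \ref{t: main theorem}, the adjunction restricts to an isomorphism between proper subcolocales of $\So(L)$ and essential codense subcolocales of $\sll$, so that the essential subcolocale $\Sb(L)$ is automatically a fixpoint of $\Delta\circ \mi{fit}[-]$.

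I do not anticipate a genuine obstacle here, since all the substantive work was done in the previous section; the result is a corollary of Theorem \ref{t: main theorem} together with the essentiality of $\Sb(L)$. The only point requiring care is the bookkeeping between the object-level notation $(L,\Sb(L))$ in $\bd{SZDBF}$ and the underlying subcolocale equality, and making sure to cite essentiality of $\Sb(L)$ (Proposition \ref{p: concrete examples}) rather than re-deriving it.
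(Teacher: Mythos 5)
Your proof is correct and follows essentially the same route as the paper: the paper's own proof simply cites essentiality of $\Sb(L)$ (Proposition \ref{p: concrete examples}) together with Proposition \ref{p: delta iff fixpoint} to conclude the equality. If anything, you are slightly more careful than the paper, since you explicitly supply the counit inclusion $\Delta(\mi{fit}[\Sb(L)])\se \Sb(L)$ via Lemma \ref{l: the main order adjunction}, which the paper leaves implicit.
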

\begin{proof}
    The strictly zero-dimensional biframe $(L,\Sb(L))$ is essential, by Proposition \ref{p: concrete examples}. By Proposition \ref{p: delta iff fixpoint}, then, $\Delta(\mi{fit}[\Sb(L)])=\Sb(L)$.
\end{proof}

\begin{proposition}\label{p: exact but not smooth}
    For a frame $L$, and a sublocale $S\se L$:
    \begin{enumerate}
        \item $S$ is smooth if and only if it lifts to a map $f:(L,\Sb(L))\to (S,\Sb(S))$ of strictly zero-dimensional biframes.
        \item $S$ is exact if and only if it lifts to a map $f:(L,\mi{fit}[\Sb(L)])\to(S,\mi{fit}[\Sb(L)])$ of Raney extensions.
    \end{enumerate}
\end{proposition}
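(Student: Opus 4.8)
The plan is to treat both items by a single template, exploiting that $\Sb(L)$ is the \emph{smallest} dense subcolocale of $\sll$ (Lemma \ref{l: SbL is codense}) and that $\mi{fit}[\Sb(L)]$ is the \emph{smallest} subcolocale of $\So(L)$ containing all opens — it is exactly the closure of $\{\op(x)\}$ under joins and the operations $(-)\sm^{\mi{fit}}\op(x)=\mi{fit}((-)\cap\cl(x))$, by Lemma \ref{l: fit of F cap cx} and Proposition \ref{p: subcolocales of SoL characterization}. Because of this minimality the extension $\overline{s}$ is \emph{forced}. For a \textbf{SZDBF}-morphism we only require $\overline{s}$ to extend $\cl(s)$, but $\overline{s}$ must preserve complements, and $\op(x)$ is the complement of $\cl(x)$ in $\Sb(L)$; hence $\overline{s}(\op_L(x))=\op_S(s x)$ is forced as well. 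For a \textbf{Raney}-morphism we require $\overline{s}$ to extend $\op(s)$, and since $\mi{fit}[\Sb(L)]$ is generated by opens under joins and fitted differences (which a coframe map preserves), $\overline{s}$ is again determined. In both cases one computes $\op_S(sx)\cap\cl_S(sy)=(\op(x)\cap\cl(y))\cap S$, so on the complemented generators $\overline{s}$ must be the image-along-$s$ map, i.e. intersection with $S$ (followed by fitting in $S$, for item 2), extended by joins. Thus in each item the lift exists \emph{iff} this forced assignment is a well-defined coframe homomorphism into the correct target, and the whole content is to identify this with exactness (item 2) resp. smoothness (item 1).

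For item (2) the forced map is $\overline{s}(\mi{fit}(B))=\mi{fit}_S\big(\bve_i (C_i\cap S)\big)$ for $B=\bve_i C_i\in\Sb(L)$ with $C_i$ complemented, targeting $\mi{fit}[\Sb(S)]$. To prove \emph{$S$ exact $\Rightarrow$ lift} I would check this is a coframe homomorphism: preservation of joins is built in, and preservation of binary meets reduces, via Lemma \ref{l: fit of F cap cx} and item \ref{from dense SL item 3} of Proposition \ref{p: from denseSL to denseSoL}, to a distribution of $(-)\cap S$ over the proper joins $\bvf_i\op(x_i)=\op(\bve_i x_i)$; these joins are exact by properness (Corollary \ref{c: concrete examples of proper}), so when $s$ is exact the criterion of Lemma \ref{l: characterization of exact sublocales} supplies exactly the required distribution, and with it representation-independence of the assignment. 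For the converse I would feed $\overline{s}$ a family of opens $\op(x_i)$ whose meet presents an arbitrary exact meet $\bwe_i x_i$ and, using that $\overline{s}$ preserves meets and agrees with $\op(s)$ on opens, read off from the identity $\overline{s}(\bwf_i\op(x_i))=\bwf_i\op_S(sx_i)$ the equality $s(\bwe_i x_i)=\bwe_i s(x_i)$, i.e. that $s$ preserves exact meets; by Lemma \ref{l: characterization of exact sublocales} this is precisely exactness of $S$.

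Item (1) follows the same pattern with $\Sb$ in place of $\mi{fit}[\Sb]$ and no fitting: the \textbf{SZDBF}-morphism condition unwinds, after the complement argument above forces $\overline{s}$, to the single requirement that image-along-$s$, equivalently $(-)\cap S$, restricts to a well-defined coframe homomorphism $\Sb(L)\to\Sb(S)$, which is exactly the condition defining smoothness of $S$. As a consistency check one notes that the functor $\mi{fit}\colon\bd{SZDBF}\to\bd{Raney}$ carries a lift of item (1) to a lift of item (2), recovering that smooth sublocales are exact — and leaving room for exact-but-not-smooth sublocales, which is the point of the section (cf. the complete sublocale of Example \ref{e: igor example}). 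The main obstacle in both items is the existence/coframe-homomorphism half: the operation $(-)\cap S\colon\sll\to\mf{S}(S)$ always preserves arbitrary intersections but generally \emph{fails} to preserve finite joins, so the real work is to show that restricting it to $\Sb(L)$ (resp. composing with fitting on $\mi{fit}[\Sb(L)]$) repairs this failure precisely under smoothness (resp. exactness), and, in the converse, to recover the defining preservation property by evaluating $\overline{s}$ on suitably chosen open sublocales. Checking that the target actually lands in $\Sb(S)$ (resp. $\mi{fit}[\Sb(S)]$) and that fitting commutes appropriately with the relevant meets, via Lemma \ref{l: fit of F cap cx} and Lemma \ref{l: intersection and fitting}, is the remaining bookkeeping.
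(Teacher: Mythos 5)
Your strategy (force the extension on the complemented generators, then characterize when the forced assignment is a coframe homomorphism) is genuinely different from the paper's proof, which is a pure assembly of citations: item (1) is quoted from Lemma 3.39 of \cite{manuell15} together with Corollary 4.2 of \cite{arrieta22}, and item (2) is obtained by composing the isomorphism $\mi{fit}[\Sb(L)]\cong \Sc(L)^{op}$ of \cite{jakl25} with Propositions 6.6 and 7.14 of \cite{suarez25} (the frame maps lifting to these Raney extensions are exactly the exact maps, and surjections preserving exact meets are exact maps). Your attempt, however, has genuine gaps, and the most serious one is item (1). Neither you nor this paper defines ``smooth,'' and you close the argument by declaring that your unwound condition --- that intersection with $S$, extended by joins, is a well-defined coframe homomorphism $\Sb(L)\to\Sb(S)$ --- ``is exactly the condition defining smoothness of $S$.'' It is not: smoothness is defined in the cited literature in terms of the induced assignment on all of $\sll$ (equivalently, on congruence frames), and the equivalence between that notion and your coframe-homomorphism condition on the Booleanization $\Sb$ is precisely the nontrivial content of the two results the paper cites. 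As written, your item (1) assumes its conclusion.

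Item (2) has two concrete holes. First, a Raney morphism requires $\overline{s}$ to be a coframe map, i.e.\ to preserve \emph{arbitrary} meets and finite joins; you verify joins and binary meets only, and preservation of infinite meets does not follow from these. Second, both the representation-independence of your forced assignment and the converse step, where you ``read off'' $s(\bwe_i x_i)=\bwe_i s(x_i)$ from $\overline{s}(\bwf_i \op(x_i))=\bwf_i \op_S(sx_i)$, hinge on knowing that $\bwf_i\op(x_i)=\op(\bwe_i x_i)$ in $\mi{fit}[\Sb(L)]$ whenever $\bwe_i x_i$ is an exact meet. That identity is not free: it holds because kernels of elements of $\mi{fit}[\Sb(L)]$ are exact filters (Lemma \ref{l: FEL are the kernels of SbL}) while principal filters are closed under all meets, so the largest element of $\mi{fit}[\Sb(L)]$ inside $\bca_i\op(x_i)$ is $\op(\bwe_i x_i)$. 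You never invoke Lemma \ref{l: FEL are the kernels of SbL}; the result you cite instead, Lemma \ref{l: characterization of exact sublocales}, does not by itself deliver either this meet identity or the well-definedness of the forced map. The skeleton of your item (2) can be completed along these lines (the exact-filter description is the natural vehicle for handling arbitrary meets as well), but as it stands the crucial steps are asserted rather than proved.
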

\begin{proof}
We prove the two items in turn.
\begin{enumerate}
    \item This follows immediately from both Lemma 3.39 in \cite{manuell15} and Corollary 4.2 of \cite{arrieta22}.
    \item In \cite{jakl25} it is proven that there is an isomorphism $\mi{fit}[\Sb(L)]\cong \Sc(L)^{op}$. This is also an isomorphism $(L,\mi{fit}[\Sb(L)])\cong (L,\opp{\Sc(L)})$ of Raney extensions. In Proposition 6.6 of \cite{suarez25} the frame morphisms $f:L \to M$ that lift to Raney extensions are characterized as the exact maps. Finally, in Proposition 7.14 of \cite{suarez25} it is shown that surjections that preserve all exact meets are exact maps. \qedhere
\end{enumerate}
\end{proof}

We are ready to give the desired counterexample.

\begin{example}
   Suppose there is a frame $L$ with a sublocale $S\se L$ which is exact but not smooth. Proposition \ref{p: exact but not smooth} and Lemma \ref{l: delta fit of SbL is SbL} imply that the corresponding frame surjection $s:L\to S$ lifts to a Raney morphism $f:(L,\mi{fit}[\Sb(L)])\to (S,\mi{fit}[\Sb(S)])$ which does not in turn lift to a morphism $f:\Delta(L,\mi{fit}[\Sb(L)])\to \Delta(S,\mi{fit}[\Sb(S)])$ in $\bd{SZDBF}$. Therefore, once again a counterexample is provided by both 5.12 of \cite{bezhanishvili2025mckinseytarskialgebrasraneyextensions} and Example \ref{e: igor example}.
\end{example}

\printbibliography
\end{document}